\NewDocumentCommand{\xrightarrows}{ O{}O{} }{%
\vcenter{\hbox{%
\begin{tikzpicture}
  \node[minimum width=1cm,minimum height=1ex,anchor=south,align=center] (a){\text{\vphantom{hg}#1}\\[0.5ex] \vphantom{hg}#2};
  \draw[<-] ([yshift=0.35ex]a.west) -- ([yshift=0.35ex]a.east);
  \draw[->] ([yshift=-0.35ex]a.west) -- ([yshift=-0.35ex]a.east);
\end{tikzpicture}
}}%
}%
\theoremstyle{change}  
\newtheorem{theorem}{Theorem.}[section] 
\newtheorem{lem}[theorem]{Lemma.}  
\newtheorem{prop}[theorem]{Proposition.}
\newtheorem{coro}[theorem]{Corollary.}
\newtheorem{remarki}[theorem]{Remark.}
\newtheorem{example}[theorem]{Example.}
\newtheorem{defi}[theorem]{Definition.}
\newtheorem{nothing}[theorem]{} 
\newenvironment{proof}{\paragraph{Proof}}{\hfill$\square$}
\renewcommand{\le}{\leqslant} 
\newcommand{\Iso}{\mathrm{Iso}}
\newcommand{\Aut}{\mathrm{Aut}}
\newcommand{\calA}{\mathcal{A}}
\newcommand{\calC}{\mathcal{C}}
\newcommand{\calE}{\mathcal{E}}
\newcommand{\calF}{\mathcal{F}}
\newcommand{\calM}{\mathcal{M}}
\newcommand{\CC}{\mathbb{C}}
\newcommand{\Def}{\mathrm{Def}}
\newcommand{\Defres}{\mathrm{Defres}}
\newcommand{\Ext}{\mathrm{Ext}}
\newcommand{\Hom}{\mathrm{Hom}}
\newcommand{\id}{\mathrm{id}}
\newcommand{\Ind}{\mathrm{Ind}}
\newcommand{\Inf}{\mathrm{Inf}}
\newcommand{\Indinf}{\mathrm{Indinf}}
\newcommand{\Inn}{\mathrm{Inn}}
\newcommand{\Out}{\mathrm{Out}}
\newcommand{\Res}{\mathrm{Res}}
\newcommand{\ZZ}{\mathbb{Z}}
\newcommand{\tildeeKk}{\widetilde{e_{(K, \kappa)}^G}}
\newcommand{\tildeeLl}{\widetilde{e_{(L, \lambda)}^G}}
\newcommand{\tildeeG}{\widetilde{e_G^G}}
\newcommand{\ctimes}{{\mathbb C^\times}}
\newcommand{\1}{{\bf 1}}
\title{A Categorical Decomposition of $\CC^{\times}$-fibered $p$-biset Functors} 
\author{Olcay Co\c{s}kun$^1$ and Ruslan Muslumov$^2$}
\date{
	$^1$ Mathematics Research Center, ASOIU, Baku, Azerbaijan \\ 
   \texttt{olcay.coshkun@asoiu.edu.az}\\%
	$^2$ ADA University, Baku, Azerbaijan\\ \texttt{rmuslumov@ada.edu.az} \\[2ex]%
}
\begin{document}
\maketitle
\begin{abstract}
We generalize Bouc’s construction of orthogonal idempotents in the double Burnside algebra to the setting of the double $\mathbb{C}^\times$-fibered Burnside algebra. This yields a structural decomposition of the evaluations of $\mathbb{C}^\times$-fibered biset functors on finite groups. We then construct a complete set of orthogonal idempotents in the category of $\mathbb{C}^\times$-fibered $p$-biset functors, leading to a categorical decomposition of this category into subcategories indexed by isomorphism classes of atoric $p$-groups.

Furthermore, we introduce the notion of \emph{vertices} for indecomposable functors and establish that the $\mathrm{Ext}$-groups between simple functors with distinct vertices vanish. As an application, we describe a set containing composition factors of the monomial Burnside functor, thereby providing new insights into its structure. Additionally, we develop a technique for analyzing fibered biset functors via their underlying biset structures.

\noindent {\bf Keywords}:
Fibered biset functor, Fibered Burnside algebra, Idempotents, Monomial Burnside functor, Truncated algebra 

\smallskip

\noindent {\bf MSC[2020]} 18A25, 18B99, 19A22 20J15

\end{abstract}


\section{Introduction}

After its introduction in \cite{densembles}, Bouc's  theory of biset functors has become a central framework for studying operations coming from permutation actions on representation theoretic constructions. In particular, the classification and decomposition of $p$-biset functors over a field $k$ of characteristic different from $p$ has been extensively developed by Serge Bouc and others, see \cite{biset functor} and references there. A cornerstone of this theory is the construction of canonical orthogonal idempotents in the double Burnside algebra $kB(G,G)$, indexed by minimal sections of $G$, which allows for a fine decomposition of functor evaluations and a block-theoretic understanding of the category of $p$-biset functors.

A generalization of biset functors, introduced by Robert Boltje and the first author \cite{fibered biset} incorporates multiplicative character data and yields a more refined structure. Although the new theory of fibered biset functors is more involved, it has found several applications and has become a useful tool in representation theory, see \cite{CY}, \cite{BY}, \cite{BGE}, \cite{C}.  

In this paper, we focus on $\mathbb C^\times$-fibered biset functors only and extend Bouc's results from \cite{double} to the fibered setting. Our goal is to construct and analyze a family of orthogonal idempotents in the fibered double Burnside algebra $B_k^{\mathbb C^\times}(G,G)$ that mirrors the structure and properties of Bouc's idempotents in the classical setting.

To achieve this, we develop a truncated algebra $\mathcal E(G)$ modeled on Bouc’s construction but adapted to the case of fibered bisets. We define a canonical basis of $\mathcal E(G)$ using bisets of the form $[G \times G / (U, \upsilon)]$ where $U$ is a covering subgroup, that is, it has full coordinate projections, and $\upsilon$ is a linear character, and we provide explicit formulas for the multiplication of these basis elements. We then construct a system of idempotents $\varphi^G_{K,\kappa}$, indexed by pairs $(K,\kappa)$, where $K \leq \Phi(G)$ is normal in $G$ and $\kappa: K\to\ctimes$ is a character,  and show that these idempotents are mutually orthogonal and sum to the identity element of $\mathcal E(G)$.

The idempotents $\varphi_{K, \kappa}^G$ replaces the idempotents $\varphi_N^G$ for $N\unlhd G$ in \cite{double} with a twist. In Bouc's framework, every idempotent $\varphi_N^G$ can be expressed in terms of $\varphi_1^G$, highlighting the distinguished role played by the latter within the theory. Due to the fibered structure, there is not only one special idempotent but a set of them, precisely, we can express all $\varphi_{K, \kappa}^G$ in terms of the ones where the character $\kappa$ is faithful. Nevertheless we still obtain a decomposition of fibered biset functors via this set of idempotents. Namely, for any fibered biset functor
$F$ and any finite group $G$ we obtain a decomposition 
\[
F(G) \cong \bigoplus_{(T,S) \in [\Pi(G)]} \bigoplus_{(K,\kappa) \in [\mathcal{M}_{T/S}^*]} \left( \varphi_{[K,\kappa]}^{T/S} F(T/S) \right)^{N_G(T,S)/T}.
\]
Here, $[\Pi(G)]$ denotes a set of representatives of the conjugacy classes of minimal sections of $G$, i.e., sections \(S\unlhd T\leq G\) with \(S \leq \Phi(T)\), and $[\mathcal{M}_{T/S}^*]$ denotes a set of orbit representatives $[K,\kappa]$ under the action of $\Out(T/S)$, where $K$ is a normal subgroup of $T/S$ contained in its Frattini subgroup, and $\kappa$ is a faithful character of $K$.

The main result of the paper is a decomposition theorem for  the category $\mathcal F_{k,p}^{\mathbb C^\times}$ of $\mathbb{C}^\times$-fibered $p$-biset functors. Assuming that the prime $p$ is invertible in the coefficient ring $k$, we show that there is an equivalence of categories
\[
\mathcal F_{k,p}^{\mathbb C^\times} \simeq \prod_{M\in[\mathcal At_p]}\widehat{c}_M\mathcal F_{k,p}^{\mathbb C^\times},
\]
where the product is over isomorphism classes of atoric $p$-groups and for a given atoric $p$-group $M$, we denote by $\widehat{c}_M$ the \emph{Bouc's idempotent} as defined in Section \ref{Decomposing}. These are certain central idempotents of the category $\mathcal F_{k,p}^{\mathbb C^\times}$. This is the fibered version of Bouc's decomposition theorem from \cite{double}.

As an application of the above theorem, we define vertices of indecomposable $\mathbb C^\times$-fibered biset functors and show that ext-groups of simple functors with distinct vertices vanishes. This in turn leads to a new approach to the problem of determining biset functor structure of a fibered biset functor. 
In this paper, we have shown that for an indecomposable fibered $p$-biset functor $F$ with vertex $M$, composition factors of the underlying $p$-biset functor $F^\flat$ are controlled by $M$ in a particular way. See Section \ref{Applications} for details.

\section{Preliminaries}\label{sec:prelim}

In this section we collect the basic definitions and results about fibered bisets and fibered biset functors that will be used throughout the paper. We refer the reader to \cite{fibered biset} for further details. Fix a multiplicatively written abelian group $A$ and a commutative ring $k$.

\begin{nothing}{\bf Fibered bisets.} Let $G$ and $H$ be finite groups. An \emph{$A$-fibered $G$-set} is a left $(A \times G)$-set that is free as an $A$-set and has finitely many $A$-orbits. More generally, an \emph{$A$-fibered $(G,H)$-biset} is an $A$-free $(A \times G \times H)$-set with finitely many $A$-orbits. A morphism between $A$-fibered $(G,H)$-bisets is an $(A\times G \times H)$-equivariant map.

Given an $A$-fibered $(G,H)$-biset $X$ and $x \in X$, the \emph{stabilizing pair} at $x$ is $(U_x, \phi_x)$, where $U_x \leq G \times H$ is the stabilizer of the $A$-orbit $A\cdot x$ under the induced $G\times H$-action, and $\phi_x: U_x \to A$ is the unique map satisfying $u\cdot x = \phi_x(u) x$.

An $A$-fibered $(G,H)$-biset $X$ is called \emph{transitive} if the $G\times H$-action on the set of $A$-orbits is transitive. Each transitive $A$-fibered $(G,H)$-biset is determined up to isomorphism by any of its stabilizing pairs $(U,\phi)$. We write
\[
\mathcal{M}_{G \times H}^A = \{ (U, \phi) \mid U \leq G \times H,\ \phi \in \Hom(U, A) \}.
\]
When the fiber group \(A\) is clear from context, we abbreviate \(\Hom(U, A)\) by \(U^*\), and we write \(\mathcal{M}_{G\times H}\) instead of \(\mathcal{M}_{G\times H}^A\). For a given pair $(U,\upsilon)\in\mathcal M_{G\times H}$, we denote by
\[
\Big(\frac{G\times H}{U, \upsilon}\Big) \quad \text{and}\quad \Big[ \frac{G\times H}{U,\upsilon} \Big]
\]
the corresponding transitive $A$-fibered $(G,H)$-biset and its isomorphism class.

By Goursat Theorem, we attach the quintuple $(p_1(U), k_1(U), \phi, k_2(U), p_2(U)$ to the subgroup $U$. Here $p_1(U), p_2(U)$ are the first and the second projections of $U$, respectively. We also have $k_1(U)=\{k\in G\mid (k,1)\in U\}$ and $k_2(U)=\{l\in H\mid (1,l)\in U\}$ and $\phi: p_2(U)/k_2(U)\to p_1(U)/k_1(U)$ is the isomorphism determined by $U$. We also write the restriction of \(\upsilon\) to $k_1(U)\times k_2(U)$ as \(\upsilon|_{k_1(U)\times k_2(U)} = \upsilon_1 \times \upsilon_2^{-1}\) for some $\upsilon_1\in k_1(U)^*$ and $\upsilon_2\in k_2(U)^*$. Following \cite[Section 1]{fibered biset}, we call $l(U, \upsilon) = (p_1(U), k_1(U), \upsilon_1)$ the left invariants  and $r(U,\upsilon) = (p_2(U), k_2(U), \upsilon_2)$ the right invariants of $(U, \upsilon)$. Moreover we construct the subgroups
$\widehat{k_1(U)} = \ker\kappa$ and $\widehat{k_2(U)} = \ker\lambda$.Together with the left and right invariants, by \cite[Proposition 1.3]{fibered biset}, we have that $k_1(U)/\widehat{k_1(U)}$ is central in $p_1(U)/\widehat{k_1(U)}$ and that $k_2(U)/\widehat{k_2(U)}$ is central in $p_2(U)/\widehat{k_2(U)}$.

\begin{defi}
Let ${}_G\mathrm{set}_H^A$ denote the category of $A$-fibered $(G,H)$-bisets with morphisms given by $(A\times G\times H)$-equivariant maps. Disjoint union defines a categorical coproduct in ${}_G\mathrm{set}_H^A$.

The Grothendieck group of ${}_G\mathrm{set}_H^A$ is denoted by $B^A(G,H)$ and is called the \emph{fibered Burnside group}. A $\ZZ$-basis of $B^A(G,H)$ is given by the isomorphism classes of transitive $A$-fibered $(G,H)$-bisets.
\end{defi}

\begin{nothing} 
When $A = \{1\}$ is the trivial group, we write $B(H, G):= B^{\{1\}}(H, G)$ for the Burnside group of $(H, G)$-bisets. We refer to \cite[Part I]{biset functor}.
Throughout the paper we identify \( B_k(H, G):= k\otimes B(H, G) \) as a subgroup of $B_k^{\CC^{\times}}(H, G):= k\otimes B^{\CC^{\times}}(H, G)$ under the canonical inclusion
\[
    B_k(H, G) \hookrightarrow B_k^{\CC^{\times}}(H, G)
\]
given by 
\[
\left[  \frac{G \times H}{U} \right]\mapsto \left[ \frac{G \times H}{U, 1} \right].
\]
With this identification, we have the usual operations of \emph{induction}, \emph{restriction}, \emph{inflation}, \emph{deflation}, and \emph{isomorphism} in the fibered Burnside group. We fix our notation below. Let $H\le G$ and $N\unlhd G$. Let $\pi: G\to G/N$ denote the natural projection. Then 
\[
\Ind_H^G = \left[ \frac{G \times H}{\Delta(H), 1} \right], \quad \Res^G_H = \left[ \frac{H \times G}{\Delta(H), 1} \right].
\]
\[
\Inf_{G/N}^G = \left[ \frac{G \times G/N}{\Delta_\pi(G), 1} \right], \quad \Def^G_{G/N} = \left[ \frac{G/N \times G}{{}_\pi\Delta(G), 1} \right]
\]
where 
\[
\Delta_\pi(G) = \{(g, gN)\mid g\in G\}\le G\times G/N
\]
and 
\[
{}_\pi\Delta(G) = \{(gN, g)\mid g\in G\}\le G/N\times G.
\]
\end{nothing}

In order to define fibered biset functors, we introduce a composition product on fibered bisets. Let $G$, $H$, and $K$ be finite groups, and let $X$ be an $A$-fibered $(G,H)$-biset and $Y$ an $A$-fibered $(H,K)$-biset. The \emph{composition product}, or \emph{tensor product}, of $X$ and $Y$ is defined to be the $A$-fibered $(G,K)$-biset given by the set of $A$-free orbits in the orbit space
\[
X \times_{AH} Y := (X \times Y)/\!(A \times H),
\]
where the action of $(a,h)\in A \times H$ on $(x,y)\in X \times Y$ is given by
\[
{}^{(a,h)}(x,y) = (x(a^{-1},h^{-1}), (a,h)y).
\]

If both $X$ and $Y$ are transitive, the resulting biset admits an explicit decomposition via the Mackey formula:
\[
\left( \frac{G \times H}{U, \upsilon} \right) \circ \left( \frac{H \times K}{V, \omega} \right) 
\cong 
\bigoplus_{\substack{t \in p_2(U) \backslash H / p_1(V) \\ \upsilon_2|_{H_t} = \omega_1|_{H_t}}}
\left( \frac{G \times K}{U * {}^{(t,1)}V,\, \upsilon * {}^{(t,1)}\omega} \right),
\]
where $H_t := k_2(U) \cap {}^t k_1(V)$. Also we set
        \[
            U*V := \{(g,k)\in G\times K \mid \exists h\in H, (g,h)\in U \, \text{and} \, (h,k)\in V\}
        \]
        and
        \[
            \upsilon * \omega \in (U*V)^* \quad \text{by} \quad (\upsilon * \omega)(g,k) := \upsilon(g,h)\omega(h,k),
        \]
        where $h\in H$ is chosen such that $(g,h)\in U$ and $(h,k)\in V$. See \cite[Corollary 2.5]{fibered biset} for a proof.

Under this product structure, by \cite[Proposition 2.8]{fibered biset} any transitive $A$-fibered $(G,H)$-biset with stabilizing pair $(U,\upsilon)$ admits a canonical decomposition:
\begin{equation}
\label{basiselementdecomposition}
    \left( \frac{G \times H}{U, \upsilon} \right) 
    \cong 
    \Ind_P^G \circ \Inf_{P / \widehat{K}}^P \circ X \circ \Def_{Q / \widehat{L}}^H \circ \Res_Q^H,
\end{equation}
where $(P,K,\kappa) = l(U,\upsilon)$, $(Q,L,\lambda) = r(U,\upsilon)$, and $\widehat{K} = \ker \kappa$, $\widehat{L} = \ker \lambda$. Here, $X$ is a transitive $A$-fibered $(P/\widehat{K}, Q/\widehat{L})$-biset whose stabilizing pair $(U', \upsilon')$ satisfies $U' = U / (\widehat{K} \times \widehat{L})$ and $\upsilon' = \upsilon \circ \pi^{-1}$ where
\[
\pi: (P\times Q) / (\widehat{K} \times \widehat{L}) \to P/\widehat{K} \times Q/\widehat{L}
\]
is the canonical projection. Moreover, the restriction of $\upsilon'$ to the largest rectangular subgroup $K' \times L' \leq U'$ is product of faithful characters.
\end{nothing}

\begin{nothing}
\label{section:eGG}
    By \cite[Lemma 2.5.8]{biset functor}, there is also a ring homomorphism $\widetilde{?}$ from the Burnside ring \( B_k(G) \) of \( G \) to the ring $B_k(G, G)$ and hence to $B_k^\ctimes(G,G)$. Here the ring structure on $B_k^\ctimes(G,G)$ comes from the tensor product of fibered bisets. For any \( G \)-set \( [G/K] \in B_k(G) \), we have
    \[
        \widetilde{[G/K]} := \left[\frac{G\times G}{\Delta(K),1}\right] \in B_k^{\CC^{\times}}(G,G).
    \]
    When the order of \( G \) is invertible in the coefficient ring \( k \), for any subgroup $H\le G$, the element 
    \[
        e_H^G := \frac{1}{|N_G(H)|} \sum_{K \leq H} |K| \mu(K, H) [G/K] \in B_k(G)
    \]
    is a primitive idempotent of $B_k(G)$. Moreover 
    as $H$ runs over all subgroups of $G$ up to conjugation, they form a complete set of orthogonal primitive idempotents of $B_k(G)$. 
    The image of this element under the aforementioned ring homomorphism is the idempotent 
    \[
        \widetilde{e_H^G} := \frac{1}{|N_G(H)|} \sum_{K \leq H} |K| \mu(K, H) \left[ \frac{G \times G}{\Delta(K), 1} \right] \in B_k^{\CC^{\times}}(G,G).
    \]
    Consequently, the elements \( \widetilde{e_H^G} \), as \( H \) runs over a set of representatives of conjugacy classes of subgroups of $G$, form a system of orthogonal idempotents in the \( k \)-algebra \( B_k^{\CC^{\times}}(G,G) \), whose sum equals the identity element. 
\end{nothing}
\begin{nothing}
\label{elemopontilde}
    Let $G$ be a group and let $k$ be a commutative unitary ring in which $|G|$ is invertible. By \cite[Theorem 5.2.4 and Corollary 5.2.4]{biset functor}, one determines the interplay between induction, restriction, deflation, and inflation maps with the idempotent element $\widetilde{e_G^G}$. We recall the resulting equations below. 

    \smallskip
    
       \noindent {(a)} For any $H\lneq G$, we have 
        \begin{eqnarray*}
            \Res_H^G \widetilde{e_G^G} &=& 0, \quad \text{and} \quad \widetilde{e_G^G} \Ind_H^G = 0.
        \end{eqnarray*}
        
        \noindent (b) For $N\unlhd G$ and $N\leq \Phi(G)$, we have
        \begin{eqnarray*}
            \widetilde{e_{G/N}^{G/N}} \Def_{G/N}^G &=& \Def_{G/N}^G \widetilde{e_G^G}, \\
            \Inf_{G/N}^G \widetilde{e_{G/N}^{G/N}} &=& \widetilde{e_G^G} \Inf_{G/N}^G.
        \end{eqnarray*}
        
        \noindent (c) For any $N\unlhd G$, we have
        \begin{eqnarray*}
            \Def_{G/N}^G \widetilde{e_G^G} \Inf_{G/N}^G &=& m_{G,N}\widetilde{e_{G/N}^{G/N}}, 
        \end{eqnarray*}
        where the deflation number $m_{G, N}$ is given by
        \[
            m_{G,N} = \frac{1}{|G|} \sum_{\substack{X \le G \\ XN = G}} |X|\mu(X,G).
        \]
\end{nothing}

\begin{nothing}{\bf Fibered biset functors.}
Let $A$ be an abelian group and $k$ a commutative ring. The \emph{$A$-fibered biset category} $\mathcal{C}_k^A$ is defined as follows:
\begin{itemize}
    \item Objects are finite groups.
    \item For finite groups $G$ and $H$, the morphism set from $G$ to $H$ is given by
    \[
    \mathrm{Hom}_{\calC_k^A}(G,H) := k \otimes_\mathbb{Z} B^A(H, G) = B_k^A(H,G)  .
    \]
    \item Composition of morphisms is induced by the tensor product of $A$-fibered bisets, linearized over $k$.
\end{itemize}
We will also consider a full subcategory of \(\mathcal{C}_k^A\), whose objects are restricted to finite \(p\)-groups, where \(p\) is a prime number with \(p\in k^\times\). In this case, we denote the subcategory by \(k\mathcal{C}_p^A\).

An \emph{$A$-fibered biset functor over $k$} is a $k$-linear functor $F \colon \mathcal{C}_k^A \to k\text{-Mod}$. The category $\mathcal{F}_k^A$ of such functors is abelian. In Section~\ref{Decomposing}, we restrict our attention to \(\CC^\times\)-fibered biset functors defined on \(p\)-groups. The category of \(k\)-linear functors with domain \(k\mathcal{C}_p^{\CC^\times}\), for a prime number \(p\) with \(p \in k^\times\), is then denoted by \(\mathcal{F}_{p,k}^{\CC^\times}\).

As shown in \cite{fibered biset}, simple objects in $\mathcal{F}_k^A$ are determined by their evaluations at minimal groups. The full classification given in \cite[Theorem 9.2]{fibered biset}, however, is intricate and requires the introduction of substantial additional notation. We briefly recall the relevant structure below, omitting proofs and technical details.

Let $E_G := B^A_k(G,G)$ denote the endomorphism algebra of $G$ in the fibered biset category, and let $I_G$ be the ideal generated by morphisms factoring through groups of strictly smaller order. The \emph{essential algebra} of $G$ is then defined by
\[
\overline{E}_G := E_G / I_G.
\]
A functor $F \in \mathcal{F}_k^A$ is simple with minimal group $G$ if and only if $F(G)$ is a simple $\overline{E}_G$-module. The classification of simple objects in $\mathcal{F}_k^A$ thus reduces to the classification of simple $\overline{E}_G$-modules. To facilitate this, we now describe a suitable classification of the simple modules over the essential algebra.

Let $\mathcal{M}_G^G := \mathcal{M}_G^{A,G}$ denote the subset of $ \mathcal{M}_G^A $ consisting of all pairs $(K,\kappa)$ such that $K \unlhd G$ and $\kappa \in \mathrm{Hom}(K,A)$ is $G$-invariant. We endow $\calM_G^G$ with a poset structure by declaring that $(K, \kappa) \leq (L, \lambda)$ in $\calM_G^G$ if and only if $K \leq L$ and $\lambda|_K = \kappa$. To each such pair, one associates an element of the fibered Burnside ring
\[
e_{(K,\kappa)}^G := \left[ \frac{G \times G}{\Delta_K(G), \phi_\kappa} \right],
\]
where $\Delta_K(G) = \{(g,h) \in G \times G : gK = hK\}$ and $\phi_\kappa(g,h) := \kappa(h^{-1}g)$. These elements lie in $E_G$ and form a family of idempotents with favorable multiplicative properties.

We record the key multiplication rule for these elements, which plays an important role in the decomposition of the essential algebra:
\begin{prop}\cite[Section 4.3]{fibered biset}
\label{eKtimeseL}
Let $(K,\kappa), (L,\lambda) \in \mathcal{M}_G^G$. Then
\[
e_{(K,\kappa)}^G \cdot e_{(L,\lambda)}^G =
\begin{cases}
e_{(KL,\, \kappa \lambda)}^G, & \text{if } \kappa|_{K \cap L} = \lambda|_{K \cap L}, \\
0, & \text{otherwise}.
\end{cases}
\]
\end{prop}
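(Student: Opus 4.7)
The plan is to apply the Mackey formula from the excerpt directly to the two transitive $A$-fibered bisets with stabilizing pairs $(\Delta_K(G),\phi_\kappa)$ and $(\Delta_L(G),\phi_\lambda)$, and then identify the unique resulting summand.

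\textbf{Step 1 (reducing the double-coset sum).} Since $K,L\unlhd G$, both projections $p_2(\Delta_K(G))$ and $p_1(\Delta_L(G))$ are equal to $G$. Hence the double coset space $p_2(\Delta_K(G))\backslash G/p_1(\Delta_L(G))$ contains a single class, represented by $t=1$, so the Mackey decomposition has at most one summand.

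\textbf{Step 2 (compatibility).} Computing kernels gives $k_2(\Delta_K(G))=K$ and $k_1(\Delta_L(G))=L$, hence $H_1=K\cap L$. Unpacking the definition $\phi_\kappa(g,h)=\kappa(h^{-1}g)$, one checks that the right-hand character of $(\Delta_K(G),\phi_\kappa)$ restricted to $K\cap L$ is $\kappa|_{K\cap L}$, and similarly the left-hand character of $(\Delta_L(G),\phi_\lambda)$ on $K\cap L$ is $\lambda|_{K\cap L}$. The Mackey compatibility condition therefore becomes exactly $\kappa|_{K\cap L}=\lambda|_{K\cap L}$; if it fails, the sum is empty and the product is zero, giving the second case of the proposition.

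\textbf{Step 3 (the star product).} Assume now that compatibility holds. I will show that $\Delta_K(G)*\Delta_L(G)=\Delta_{KL}(G)$: a pair $(g,k)$ lies in the star product iff there exists $h\in G$ with $g^{-1}h\in K$ and $h^{-1}k\in L$, and this forces $g^{-1}k\in KL$. Conversely, given $g^{-1}k=k_0l_0$ with $k_0\in K$, $l_0\in L$, the choice $h:=gk_0$ verifies both conditions. Since $K,L\unlhd G$, the subgroup $KL=LK$ is normal in $G$.

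\textbf{Step 4 (the product character).} Still under compatibility, the formula $(\kappa\lambda)(k_0 l_0):=\kappa(k_0)\lambda(l_0)$ gives a well-defined homomorphism $KL\to A$: any identity $k_1l_1=k_2l_2$ forces $k_2^{-1}k_1=l_2l_1^{-1}\in K\cap L$, and because $A$ is abelian one reduces well-definedness to the equation $\kappa(m)=\lambda(m)$ for $m\in K\cap L$, which is precisely the hypothesis. Moreover $\kappa\lambda$ is $G$-invariant since $\kappa$ and $\lambda$ are. A direct calculation in the star product, using $h=gk_0$ from Step 3, then yields
\[
(\phi_\kappa*\phi_\lambda)(g,k)=\kappa(h^{-1}g)\lambda(k^{-1}h)=\kappa(k_0)^{-1}\lambda(l_0)^{-1}=(\kappa\lambda)(k^{-1}g)=\phi_{\kappa\lambda}(g,k).
\]
Combining Steps 1--4 identifies the unique summand as $e_{(KL,\kappa\lambda)}^G$, proving the first case.

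The main obstacle is Step 2: the subtle bookkeeping of inverse conventions built into the definition $\phi_\kappa(g,h)=\kappa(h^{-1}g)$ versus the right-invariant triple $(Q,L,\lambda)$ with $\upsilon|_{K\times L}=\kappa\times\lambda^{-1}$ from \cite[Proposition~1.3]{fibered biset}. Once these signs are pinned down, everything else is a routine application of the Mackey formula, and the multiplication rule of the proposition follows with the exact statement on the matching of $\kappa$ and $\lambda$ on $K\cap L$.
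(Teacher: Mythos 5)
The paper does not give its own proof of this proposition---it is quoted verbatim from \cite{fibered biset}---so there is no internal argument to compare against. Judged on its own terms, your proof is correct and is exactly the verification one would write starting from the Mackey formula stated earlier in the paper. In Step~1 you correctly observe that normality of $K$ and $L$ collapses the double-coset sum to $t=1$. In Step~2 the bookkeeping you flagged is indeed the delicate point, and you handle it correctly: the maximal rectangular subgroup of $\Delta_K(G)$ is $K\times K$, and $\phi_\kappa(k_1,k_2)=\kappa(k_2^{-1}k_1)=\kappa(k_1)\kappa(k_2)^{-1}$ factors as $\kappa\times\kappa^{-1}$ in the convention $\upsilon|_{K\times L}=\kappa\times\lambda^{-1}$, so the right invariant of $(\Delta_K(G),\phi_\kappa)$ is $(G,K,\kappa)$, giving $\upsilon_2|_{K\cap L}=\kappa|_{K\cap L}$; symmetrically $\omega_1|_{K\cap L}=\lambda|_{K\cap L}$. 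Steps~3 and~4 are routine, and your computation of $\phi_\kappa*\phi_\lambda$ with the explicit witness $h=gk_0$ is right (the final rewriting $\kappa(k_0)^{-1}\lambda(l_0)^{-1}=(\kappa\lambda)(k^{-1}g)$ uses $G$-invariance of $\kappa$, which you do invoke when checking that $\kappa\lambda$ is a homomorphism). The only optional tightening: in Step~4 you could avoid re-deriving $G$-invariance in the last line by simply writing $(\phi_\kappa*\phi_\lambda)(g,k)=\kappa\lambda(g^{-1}k)^{-1}=\kappa\lambda((g^{-1}k)^{-1})=\phi_{\kappa\lambda}(g,k)$, using only the homomorphism property.
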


The multiplication rule above gives rise to a family of orthogonal idempotents as follows. For each $(K, \kappa) \in \mathcal{M}_G^{G}$, define
\[
f_{(K, \kappa)}^G := e_{(K, \kappa)}^G - \sum_{(K', \kappa') > (K, \kappa)} \mu^{\triangleleft}_{K, K'}\, e_{(K', \kappa')}^G,
\]
where the partial order is given by $(K, \kappa) < (K', \kappa')$ if $K < K'$ and $\kappa'|_K = \kappa$ and where $\mu^\triangleleft_{K, K'}$ denotes the Möbius function of the poset of normal subgroups of $G$. These idempotents will be used in Section \ref{Decomposing}.

A pair $(K,\kappa) \in \mathcal{M}_G^{G}$ is called \emph{reduced} if $e_{(K, \kappa)}$ does not lie in the ideal $I_G \subseteq B^A(G,G)$ generated by morphisms factoring through proper subquotients of $G$. Equivalently, $(K,\kappa)$ is reduced if $f_{(K,\kappa)}^G \ne 0$ in the essential algebra $\overline{E}_G$.

When $A = \mathbb{C}^\times$, it is shown in \cite[Corollary 10.13]{fibered biset} that a pair $(K, \kappa)$ is reduced if and only if $K \leq Z(G) \cap [G,G]$ is cyclic and $\kappa$ is faithful. We denote the set of reduced pairs in $\mathcal{M}_G^G$ by $\mathcal{R}_G$.

To each pair $(K,\kappa) \in \mathcal{M}_G^G$, one associates a finite group
$\Gamma_{(G,K,\kappa)}$,
defined as the group of transitive $A$-fibered $(G,G)$-bisets whose left and right invariants are both equal to $(G, K, \kappa)$. 

With this notation and by \cite[Corollary~8.5]{fibered biset}, there is a bijection between the set of isomorphism classes of simple $\overline{E}_G$-modules and the set
\[
\left\{ (K, \kappa, [V]) \ \middle| \ (K, \kappa) \in \widetilde{\mathcal{R}}_G,\ [V] \in \mathrm{Irr}(k\Gamma_{(G,K,\kappa)}) \right\},
\]
where $\widetilde{\mathcal{R}}_G$ denotes the set of \emph{linkage classes} of reduced pairs in $\mathcal{R}_G$. The precise definition of the linkage relation is not needed in this paper; we refer to \cite[Section~8]{fibered biset} for details.

Now consider the set
\[
\mathcal{S} := \left\{ (G, K, \kappa, [V]) \ \middle| \ G \in \mathrm{Ob}(\mathcal{C}_k^A),\ (K, \kappa) \in \widetilde{\mathcal{R}}_G,\ [V] \in \mathrm{Irr}(k\Gamma_{(G,K,\kappa)}) \right\}.
\]
The linkage relation extends naturally to such quadruples, and we denote the set of resulting equivalence classes by $\overline{\mathcal{S}}$. Then, by \cite[Theorem 9.2]{fibered biset} there is a bijection between the set of isomorphism classes of simple $A$-fibered biset functors and the set $\overline{\mathcal{S}}$. The simple fibered biset functor corresponding to a quadruple $(G, K, \kappa, [V])$ is denoted by $S_{(G, K, \kappa, [V])}.$

\end{nothing}



\section{Truncated Algebra $\calE(G)$}
Following Bouc’s approach to decomposing the double Burnside algebra via certain idempotents, we consider a truncated algebra $\mathcal{E}(G)$ in the fibered setting. This allows us to isolate contributions from minimal sections and construct idempotents reflecting the fibered support. In this section, we define $\mathcal{E}(G)$ and specify a $k$-basis for it. We then introduce a family of orthogonal idempotent elements whose sum is the identity element of the algebra, and compute the product of each idempotent with the basis elements.

\begin{nothing}
Let $G$ be a group and let $k$ be a commutative unitary ring such that the order of $G$ is invertible in $k$. Following Bouc, we define the truncated $k$-algebra $\mathcal{E}(G)$ by
\[
    \mathcal{E}(G) := \widetilde{e_G^G} \cdot B_k^{\mathbb{C}^\times}(G, G) \cdot \widetilde{e_G^G}.
\]
Let $(U, \upsilon)\in \calM_{G\times G}$ be such that $p_1(U) = p_2(U) = G$. For future reference we call a subgroup of $G\times H$ with this projection property a \emph{covering} subgroup. We define the element
\[
    Y_{U,\upsilon} := \widetilde{e_G^G} \left[ \frac{G \times G}{U, \upsilon} \right] \widetilde{e_G^G} \in \mathcal{E}(G).
\]
A similar algebra, $E_G^c$ was constructed in \cite[Section 6.1]{fibered biset}. It is easy to see that $\mathcal E(G) = \tildeeG\cdot  E_G^c\cdot\tildeeG$.
\end{nothing}

\begin{nothing}
\label{basis}
One can verify that the elements $Y_{U,\upsilon}$ constitute a $k$-basis of $\calE(G)$, where $(U, \upsilon)$ ranges over a set of representatives of the $(G \times G)$-conjugacy classes of covering subgroups of $G\times G$. Furthermore, explicit formulas for the products of basis elements can be found as follows. Let $(U,\upsilon)$ and $(V,\omega)$ be in $\calM_{G\times G}$ with covering subgroups $U$ and $V$ of $G \times G$. Then we have:
\[
Y_{U,\upsilon} \cdot Y_{V,\omega} = \widetilde{e_G^G}  \left( \Big [ \frac{G\times G}{U,\upsilon} \Big ]   \widetilde{e_G^G}   \Big[ \frac{G\times G}{V,\omega} \Big ] \right)  \widetilde{e_G^G}.
\]
The middle term can be evaluated:
\begin{eqnarray*}
&& \Big [ \frac{G\times G}{U,\upsilon} \Big ]  \widetilde{e_G^G}  \Big [ \frac{G\times G}{V,\omega} \Big ]=\\
&=& \frac{1}{|G|} \sum_{X\leq G} |X| \mu(X,G) \left ( \Big [\frac{G\times G}{U,\upsilon}\Big]\Big [ \frac{G\times G}{\Delta(X),1}\Big] \Big [ \frac{G\times G}{V,\omega} \Big ] \right )\\
&=& \frac{1}{|G|} \sum_{\substack{X\leq G \\ \upsilon_2 \mid _{H_0} = \omega_1 \mid _{H_0} }} |X| \mu(X,G) \left ( \Big [\frac{G\times G}{U* \Delta(X) * V,\upsilon * 1 * \omega }\Big]  \right ),
\end{eqnarray*}
where $H_0 = k_2(U)\cap X \cap k_1(V)$. It follows, by left and right multiplication by $\widetilde{e_G^G}$, that:
\[
Y_{U,\upsilon} \cdot Y_{V,\omega} = \frac{1}{|G|} \sum_{X \in \delta(U,V)} |X| \mu(X,G) Y_{U * \Delta(X) * V, \upsilon * 1 * \omega}.
\]
Here $\delta(U, V)$ denotes the set of subgroups $X$ of $G \times G$ such that $U * \Delta(X) * V$ is a covering subgroup of $G \times G$ and $\upsilon_2\mid_{H_0} = \omega_1\mid_{H_0}$. 

Bouc has shown in \cite[Section 9.2]{densembles} that there exists a bijection between the covering subgroups of the form $U * \Delta(X) * V$ in $G \times G$ and the subgroups $X$ of $G$ satisfying $X k_2(U) = X k_1(V) = G$, where $U$ and $V$ are covering subgroups of $G$. Furthermore, he established that the set of subgroups $X \leq G$ such that $X k_2(U) = X k_1(V) = G$ and $k_2(U) \cap k_1(V) \leq X$ is in bijection with the set of covering subgroups of $G \times G$ that contain $k_1(U) \cap k_2(V)$ and are contained in $U * V$.

Applying these group-theoretic results, we obtain the following theorem. We omit the proof, as it closely follows the argument in \cite[Section~9]{densembles} for bisets, which is also restated in \cite[Proposition~3.2]{double}. The only difference lies in an additional condition arising from the Mackey formula for fibered bisets.

\begin{prop} Let $(U, \upsilon), (V, \omega)\in\calM_{G\times G}$ be covering pairs. Then
\begin{equation}
Y_{U,\upsilon} \cdot Y_{V,\omega} = \frac{m_{G,H_1}}{|G|} \sum_{\substack{H_1\leq X\leq G \\ Xk_2(U) = Xk_1(V) = G\\ \upsilon_2\mid _{H_1} = \omega_1\mid _{H_1}  }} |X| \mu(X,G)  Y_{L * \Delta(X) * M, \upsilon * 1 * \omega},
\end{equation}
where $H_1 = k_2(U) \cap k_1(V)$.
    
\end{prop}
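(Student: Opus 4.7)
The plan is to begin from the expansion obtained immediately before the proposition,
$$Y_{U,\upsilon}\,Y_{V,\omega}=\frac{1}{|G|}\sum_{X\in\delta(U,V)}|X|\mu(X,G)\,Y_{U*\Delta(X)*V,\,\upsilon*1*\omega},$$
where $\delta(U,V)$ collects those $X\le G$ with $Xk_2(U)=Xk_1(V)=G$ and $\upsilon_2|_{H_0}=\omega_1|_{H_0}$ on $H_0=k_2(U)\cap X\cap k_1(V)$. The goal is to regroup this sum according to $Y:=XH_1$ and to extract the factor $m_{G,H_1}$. A preliminary observation is that both $U*\Delta(X)*V$ and the character $\upsilon*1*\omega$ depend on $X$ only through $XH_1$, since elements of $H_1=k_2(U)\cap k_1(V)$ are absorbed by the star products. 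Moreover, because $H_1$ is normal in $G$ (being the intersection of two normal subgroups of $G$) and lies in both $k_2(U)$ and $k_1(V)$, the covering conditions for $X$ and for $XH_1$ are equivalent.

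I would then invoke Bouc's two bijections recalled in the paragraph before the statement to confirm that $Y\mapsto U*\Delta(Y)*V$ gives a well-defined parameterisation by the subgroups $Y$ with $H_1\le Y\le G$ and $Yk_2(U)=Yk_1(V)=G$. Regrouping the expansion by the value of $Y=XH_1$ and using that $H_0=H_1\cap X\le H_1$, the Mackey condition reduces to the uniform condition $\upsilon_2|_{H_1}=\omega_1|_{H_1}$ appearing in the proposition; when this holds, every $X$ in the fibre contributes, and the inner sum is evaluated by the M\"obius identity
$$\sum_{\substack{X\le Y\\ XH_1=Y}}|X|\mu(X,G)=m_{G,H_1}\,|Y|\mu(Y,G),$$
which is the algebraic heart of the corresponding unfibered result \cite[Proposition~3.2]{double}. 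Substituting this and relabelling $Y$ as $X$ in the remaining outer sum yields the formula in the statement.

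The main obstacle is the displayed M\"obius identity, which is precisely the combinatorial content of Bouc's original proof; its verification combines the definition of the deflation number $m_{G,H_1}$ with standard M\"obius bookkeeping on the lattice interval $[H_1,G]$ of subgroups of $G$. The only genuinely new input in the fibered setting is to check that the Mackey compatibility condition descends correctly to each fibre, and this is immediate: the restrictions $\upsilon_2|_{H_1}$ and $\omega_1|_{H_1}$ are intrinsic to the given pairs, and since $H_1\cap X\le H_1$ for every $X$ in a fibre, compatibility on $H_1$ automatically implies compatibility on $H_0$ throughout, so the stated summation condition is exactly the right one to impose.
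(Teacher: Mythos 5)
Your plan correctly reproduces Bouc's unfibered argument: the facts that $U*\Delta(X)*V$ is unchanged when $X$ is replaced by $XH_1$ (elements of $H_1=k_2(U)\cap k_1(V)$ are absorbed by $U$ and $V$), that the covering condition $Xk_2(U)=Xk_1(V)=G$ is likewise insensitive to multiplying $X$ by $H_1$, and the M\"obius identity $\sum_{X:\,XH_1=Y}|X|\mu(X,G)=m_{G,H_1}\,|Y|\mu(Y,G)$ are all sound, and this part does carry over verbatim from \cite[Proposition~3.2]{double}. (Note the paper gives no proof of this proposition, only a citation, so there is nothing to compare against directly.)

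The gap is in your treatment of the Mackey character condition. You correctly observe that $H_0=X\cap H_1\le H_1$, so that $\upsilon_2|_{H_1}=\omega_1|_{H_1}$ implies $\upsilon_2|_{H_0}=\omega_1|_{H_0}$ for every $X$ in a fibre. But you then assert that "the Mackey condition reduces to the uniform condition" and that "the stated summation condition is exactly the right one to impose," which requires the converse: that when $\upsilon_2|_{H_1}\ne\omega_1|_{H_1}$ the whole product vanishes. That converse is false. Subgroups $X$ in a fibre over $Y=XH_1$ with $X\cap H_1$ strictly smaller than $H_1$ can still satisfy $\upsilon_2|_{X\cap H_1}=\omega_1|_{X\cap H_1}$, so they appear in the pre-proposition sum $\frac{1}{|G|}\sum_{X\in\delta(U,V)}|X|\mu(X,G)\,Y_{U*\Delta(X)*V,\,\upsilon*1*\omega}$, and those contributions do not automatically cancel. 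Concretely: take $G=C_p$, $U=V=G\times G$ (so $H_1=G$, and every $X$ produces $U*\Delta(X)*V=G\times G$, hence covering), and choose $\upsilon,\omega$ with $\upsilon_2\ne\omega_1$. Then $\delta(U,V)=\{1\}$ and the pre-proposition sum equals $-\tfrac{1}{p}Y_{G\times G,\,\upsilon_1\times\omega_2^{-1}}$, which is a nonzero element of $\calE(C_p)$ (its expansion in the canonical basis has four distinct nonzero terms), whereas the right-hand side of the displayed formula is an empty sum. So you cannot simply bundle the per-$X$ condition $\upsilon_2|_{H_0}=\omega_1|_{H_0}$ into a single blanket condition on $H_1$; either the proposition must be read under the standing hypothesis $\upsilon_2|_{H_1}=\omega_1|_{H_1}$ (in which case your argument does go through, but you should say this), or the Mackey condition has to be carried term by term into the regrouping, in which case the correct inner sum is over $X$ with $X\cap H_1\le\ker(\upsilon_2\omega_1^{-1}|_{H_1})$ and is not evaluated by the naive M\"obius identity. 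As written, the step where you dispose of the Mackey condition is where the argument fails.
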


\begin{nothing}
\label{productofYs}
    If one of the groups $k_2(U)$ or $k_1(V)$ is contained in $\Phi(G)$ and $\upsilon_2\mid_{H_1} = \omega_1\mid_{H_1}$ then 
    \[
        Y_{U,\upsilon} Y_{V,\omega} = Y_{U*V,\upsilon * \omega}.
    \]  
    This identity allows us to remove Möbius-weighted summations. This is central in proving orthogonality of certain idempotents that we shall define. 
\end{nothing}
Next we examine a special case of these basis elements to form certain idempotents. Given a normal subgroup $K\unlhd G$ and $\kappa \in K^*$, we set
\[
\tildeeKk := Y_{\Delta_K(G),\phi_\kappa} = \tildeeG\cdot e_{(K, \kappa)}^G\cdot \tildeeG.
\] The following is an elementary property.
\end{nothing}

\begin{lem}
\label{etilde}
    Let \( G \) be a group. For a normal subgroup \( K \) of \( G \) contained in the Frattini subgroup of \( G \), and \( \kappa \in K^* \), we have
    \begin{equation}
        \label{tildetimesipot}
        \tildeeKk = \widetilde{e_G^G}\cdot e_{(K,\kappa)}^G = e_{(K,\kappa)}^G \cdot\widetilde{e_G^G}.
    \end{equation}
\end{lem}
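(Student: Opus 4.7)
By definition of $\tildeeKk$, the identity to establish is
$\tildeeG\cdot e_{(K,\kappa)}^G\cdot\tildeeG = \tildeeG\cdot e_{(K,\kappa)}^G = e_{(K,\kappa)}^G\cdot\tildeeG$.
I plan to prove the first equality only; the second follows by the same argument with the roles of the left and right projections of the relevant stabilizers exchanged. Reformulated, the first equality says that $\tildeeG\cdot e_{(K,\kappa)}^G\cdot(1-\tildeeG) = 0$. Using the orthogonal idempotent decomposition $1=\sum_{[H]\le G}\widetilde{e_H^G}$ in $B_k^{\ctimes}(G,G)$, this reduces to showing
$\tildeeG\cdot e_{(K,\kappa)}^G\cdot\widetilde{e_H^G}=0$
for every proper $H<G$. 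Expanding $\widetilde{e_H^G}$ in the basis recalled in the preceding paragraph of the paper, it further suffices to show
$\tildeeG\cdot e_{(K,\kappa)}^G\cdot\big[\tfrac{G\times G}{\Delta(L),1}\big]=0$
for every $L<G$.

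Next I would compute the middle product via the Mackey formula for fibered bisets. Since $p_2(\Delta_K(G))=G$, the double coset index set collapses to a single class, and since $k_2(\Delta_K(G))\cap k_1(\Delta(L))=K\cap 1$ is trivial, the character compatibility is vacuous. A direct calculation yields
\[
e_{(K,\kappa)}^G\cdot\Big[\frac{G\times G}{\Delta(L),1}\Big]
= \Big[\frac{G\times G}{\Delta_K(G)*\Delta(L),\,\phi_\kappa*1}\Big],
\]
where $\Delta_K(G)*\Delta(L)=\{(g,h)\in G\times G : h\in L,\ g\in hK\}$. The first projection of this stabilizer is $LK$.

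The decisive step then invokes the hypothesis $K\le\Phi(G)$: since elements of the Frattini subgroup are non-generators, $LK<G$ whenever $L<G$. Thus the canonical decomposition (1) expresses the above biset as $\Ind_{LK}^{G}\circ Z$ for some morphism $Z$, and by \ref{elemopontilde}(a) we have $\tildeeG\cdot\Ind_{LK}^{G}=0$, so the product vanishes. This completes the argument. The only non-routine point is ensuring that the canonical factorization of a fibered $(G,G)$-biset with first projection $LK<G$ genuinely begins with $\Ind_{LK}^G$, but this is immediate from (1); everything else is a bookkeeping exercise with the Mackey formula.
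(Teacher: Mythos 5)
Your proposal is correct and essentially matches the paper's own argument: both compute the star product $\Delta_K(G)*\Delta(L)$, observe that its first projection is $LK$ (which is proper whenever $L<G$ because $K\le\Phi(G)$), and kill the resulting term using $\widetilde{e_G^G}\cdot\Ind_{LK}^G=0$, with the second equality then following by taking opposites. The only cosmetic difference is your reduction through $1-\widetilde{e_G^G}$ and the full orthogonal idempotent decomposition $1=\sum_{[H]}\widetilde{e_H^G}$, whereas the paper simply expands the right-hand $\widetilde{e_G^G}$ directly into its M\"obius sum over subgroups $X\le G$; both routes terminate in the same calculation.
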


\begin{proof}
    By expanding $\widetilde{e_G^G}$ the product can be written as 
    \begin{eqnarray*}
        \tildeeKk = \widetilde{e_G^G} \frac{1}{|G|}\sum_{\substack{X\leq G}}|X|\mu(X,G) \Big [ \frac{G\times G}{\Delta_K(G),\phi_{\kappa}}\Big] \Big [ \frac{G\times G}{\Delta(X),1}\Big]
    \\
        = \widetilde{e_G^G} \frac{1}{|G|}\sum_{X\leq G}|X|\mu(X,G) \Big [ \frac{G\times G}{\Delta_K(G) * \Delta(X),\phi_{\kappa}\ast 1}\Big].
    \end{eqnarray*}    
    Now $p_1(\Delta_K(G)* \Delta(X)) = XK$ and it is equal to $G$ if and only if $X=G$ since $K\leq \Phi(G)$. Moreover, $\Res_H^G \widetilde{e_G^G} = 0$ for any proper subgroup $H$ of $G$. Thus the sum collapses to the case $X=G$ and hence we get
    \begin{eqnarray*}
        \tildeeKk = \widetilde{e_G^G}\cdot e_{(K,\kappa)}^G.
    \end{eqnarray*}
    The equality $\tildeeKk = e_{(K,\kappa)}^G \cdot \widetilde{e_G^G}$ follows by taking opposite bisets as described in \cite[Section 1.6]{fibered biset}, since both $\widetilde{e_G^G}$ and $e_{(K,\kappa)}^G$ are self-opposite.
\end{proof}

\begin{lem}
\label{infdefonipot}\label{Yande}
Let $M\unlhd G$ and $(L,\lambda)\in \calM_G^G$ satisfy $L\cap M \leq \ker \lambda$. Let $\bar\lambda\in (LM/M)^*$ denote the canonical image of $\lambda$ and $\lambda'=\Inf_{LM/M}^{LM}\bar\lambda$. Then

\smallskip

\noindent (a)   $e_{(L, \lambda)}^G \Inf_{G/M}^G = \Inf_{G/M}^G e_{(LM/M,\bar{\lambda})}^{G/M}$

\smallskip

\noindent (b)   $\Def_{G/M}^G e_{(L, \lambda)}^G = e_{(LM/M,\bar{\lambda})}^{G/M}  \Def_{G/M}^G$

\smallskip

\noindent (c)
$        \Inf_{G/M}^G e_{(LM/M,\bar{\lambda})}^{G/M} \Def_{G/M}^G = e_{(LM,\lambda')}^G,$

\smallskip

\noindent (d) $e_{(L, \lambda)}^G 
        = \Inf_{\bar{G}}^G \, e_{(\tilde{L}, {\tilde\lambda})}^{\bar{G}} \, \Def_{\bar{G}}^G,
    $
    where 
    \( \bar{G} := G / \ker(\lambda) \), and \( \tilde{L} := L / \ker(\lambda) \).

\noindent Finally if $L\cap M \not \leq \ker(\lambda)$ then the left hand side products in (a) and (b) above are equal to zero.
\end{lem}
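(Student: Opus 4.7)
The plan is to prove each identity by direct application of the fibered Mackey formula recalled in the preamble. Throughout I write $\Inf_{G/M}^G$ as the transitive $(G,G/M)$-biset with stabilizing pair $(\{(g,gM)\mid g\in G\},1)$ and $\Def_{G/M}^G$ as the transitive $(G/M,G)$-biset with stabilizing pair $(\{(gM,g)\mid g\in G\},1)$, and recall that $e_{(L,\lambda)}^G$ has stabilizing pair $(\Delta_L(G),\phi_\lambda)$ with $\phi_\lambda(g,h)=\lambda(h^{-1}g)$.

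For part (a), I would compute both sides as a single star product using the Mackey formula. For $e_{(L,\lambda)}^G\cdot\Inf_{G/M}^G$, one has $p_2(\Delta_L(G))=G=p_1(\{(g,gM)\})$, so only the double-coset representative $t=1$ can contribute. The Mackey compatibility condition $\upsilon_2|_{H_0}=\omega_1|_{H_0}$ reads $\lambda^{-1}|_{L\cap M}=1$, which holds by the hypothesis $L\cap M\le\ker\lambda$. The single resulting term has stabilizer $\Delta_L(G)*\{(g,gM)\}=\{(g,kM)\mid g^{-1}k\in LM\}$ and character $(g,kM)\mapsto\lambda(l)^{-1}$ when one writes $g^{-1}k=lm$ with $l\in L$, $m\in M$; this value is well-defined precisely because $L\cap M\le\ker\lambda$, and equals $\bar\lambda((kM)^{-1}(gM))$. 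A parallel computation of $\Inf_{G/M}^G\cdot e_{(LM/M,\bar\lambda)}^{G/M}$ gives exactly the same stabilizing pair, establishing (a). Part (b) is entirely analogous with the roles of the first and second coordinates interchanged.

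For (c), the cleanest route is to iterate the Mackey computation: the triple star product $\{(g,gM)\}*\Delta_{LM/M}(G/M)*\{(gM,g)\}$ collapses to $\Delta_{LM}(G)$, and the composite character is traced through to be $\phi_{\lambda'}$, where $\lambda'$ is the inflation of $\bar\lambda$ to $LM$. Alternatively, one applies (a) and (b) consecutively: by (a), $\Inf\cdot e_{(LM/M,\bar\lambda)}^{G/M}=e_{(L,\lambda)}^G\cdot\Inf$ after an intermediate identification, and the conclusion follows. Part (d) is then an immediate specialization of (c) to $M=\ker\lambda$: the hypothesis $L\cap M\le\ker\lambda$ holds trivially, one has $LM=L$, $LM/M=\tilde L$, $\bar\lambda=\tilde\lambda$, and the inflation of $\tilde\lambda$ back to $L$ is $\lambda$ itself, so (c) yields exactly $e_{(L,\lambda)}^G=\Inf_{\bar G}^G e_{(\tilde L,\tilde\lambda)}^{\bar G}\Def_{\bar G}^G$.

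For the final vanishing claim, the point is that in each Mackey computation above the only candidate double-coset representative is $t=1$, whose compatibility condition reduces, after a short check, to $\lambda|_{L\cap M}=1$. When $L\cap M\not\le\ker\lambda$ this fails, so the Mackey sum is empty and the product is zero. The main obstacle I anticipate is purely bookkeeping: being careful with the conventions $\phi_\kappa(g,h)=\kappa(h^{-1}g)$ versus $\kappa(g^{-1}h)$ and tracking the factors $\lambda$ versus $\lambda^{-1}$ that appear on the $\upsilon_2$ side of the Mackey compatibility condition, so that the identities $e_{(LM/M,\bar\lambda)}^{G/M}$ and $e_{(LM,\lambda')}^G$ come out with the correct characters rather than their inverses.
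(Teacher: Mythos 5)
Your proof is correct and follows essentially the same route as the paper for parts (a), (b), (c) and the vanishing claim: a direct Mackey computation showing that only the $t = 1$ double coset contributes, that the compatibility condition on the kernel intersection is precisely $\lambda|_{L\cap M}=1$, and that the resulting stabilizing pairs on both sides of (a) agree; (b) by symmetry/opposites; (c) by composing (a) with $\Def$ and invoking the multiplication rule for the $e_{(K,\kappa)}^G$ (Proposition~\ref{eKtimeseL}). The one place you genuinely diverge from the paper is part~(d): the paper simply invokes the canonical decomposition of transitive fibered bisets (Equation~(\ref{basiselementdecomposition})), whereas you observe that (d) is the specialization of (c) to $M=\ker\lambda$ (which is normal in $G$ since $\lambda$ is $G$-invariant), noting that then $LM=L$, $\bar\lambda=\tilde\lambda$, and $\lambda'=\lambda$. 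Your route has the advantage of being self-contained within the lemma and making the logical dependence explicit, at the small cost of needing to check that $\ker\lambda\unlhd G$; the paper's route is shorter but relies on a prior structural result. Both are valid.
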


\begin{proof}(a) By definition the left hand side of the equation is equal to
        \begin{eqnarray*}
            e_{(L, \lambda)}^G \Inf_{G/M}^G = \Big [ \frac{G\times G}{\Delta_L(G),\phi_{\lambda}} \Big ]\Big [ \frac{G\times G/M}{\Delta_{\pi}(G),1} \Big ] = \Big [ \frac{G\times G/M}{\Delta_L(G) * \Delta_{\pi}(G), \phi_{\lambda} * 1}\Big],
        \end{eqnarray*}
        where $\pi : G \rightarrow G/M$ is the canonical epimorphism. The product is non-zero since $\lambda\mid_{L\cap M} = 1\mid_{L\cap M}$ by the condition  that $L\cap M\leq \ker(\lambda)$. Then it is straightforward to prove that 
        \[
        \Delta_L(G) * \Delta_{\pi}(G) = \Delta_{\pi}(G) *\Delta_{LM/M}(G/M).
        \]
        Moreover, for $\widetilde{\phi_{\bar{\lambda}}}\in \Big (\Delta_{LM/M}(G/M) \Big)^*$ we have
        \begin{eqnarray*}
            (1*\widetilde{{{\phi_{\bar{\lambda}}}}}) (g,hM) &=& \widetilde{\phi_{\bar{\lambda}}}(gM,hM) = \bar{\lambda} (h^{-1}gM)= \\
            &=& \lambda(h^{-1}g) = \phi_{\lambda}(g,h) = (\phi_{\lambda}*1)(g,hM).
        \end{eqnarray*}
        Hence,
        \begin{eqnarray*}
            \Inf_{G/M}^G e_{LM/M,\bar{\lambda}}^{G/M} = \Big [ \frac{G\times G/M}{\Delta_{\pi}(G),1} \Big ]  \Big [ \frac{G/M \times G/M}{\Delta_{LM/M}(G/M),\widetilde{\phi_{\bar{\lambda}}}} \Big ] = \Big [ \frac{G\times G/M}{\Delta_L(G)*\Delta_{\pi}(G),\phi_{\lambda}}\Big].
        \end{eqnarray*}
       
        \noindent Part (b) can be obtained by taking the opposites of both sides of the equation in (a). Part (c) follows by composing the equation in part (a) from right with $\Def_{G/M}^G$. 
        Part~(d) follows by applying the decomposition given in Equation (\ref{basiselementdecomposition}).

        Finally the products in parts (a), (b) are being zero when $L\cap M \not \leq \ker(\lambda)$ follows from the Mackey product formula for fibered bisets.
\end{proof}

\begin{nothing} We now define a finer system of idempotents in $\mathcal E(G)$, parametrized by pairs 
$(K,\kappa)$ with $K\le\Phi(G)$ normal in $G$ and $\kappa\in K^*$ $G$-invariant. We write \(\calM_{\Phi(G)}^G\) for the set of all such pairs. Note that $\calM_{\Phi(G)}^{G}$ is a subset of $\calM_G^G$ and inherits its poset structure from the latter. Below we also consider the subset of such pairs in which $\kappa$ is faithful; we denote this subposet of $\calM_{\Phi(G)}^{G}$ by $\calM_{G}^{*}$. 

Given $(K,\kappa)\in \calM_{\Phi(G)}^G$, set
    \begin{equation}
    \label{phiKkappa}
            \varphi_{K,\kappa}^G := \sum_{(K,\kappa)\leq (L,\lambda)\in \calM_{\Phi(G)}^G}\mu_{\unlhd G}(K,L) \tildeeLl.
    \end{equation}
\end{nothing}
These will later yield an orthogonal decomposition of 
$\mathcal E(G)$ and of fibered $p$-biset functor evaluations.
With the next proposition we may always replace $\kappa$ with a faithful character. To have a comparison, we note that our definition is inspired by \cite[Notation 3.4]{double} where Bouc defines
    \begin{equation}
    \label{Boucsphi}
            \varphi_{K}^G := \sum_{\substack{L\unlhd G\\ K\leq L\le \Phi(G)}}\mu_{\unlhd G}(K,L)\ Y_{L}.
    \end{equation}
where $Y_L = Y_{L, 1}$ in our notation.
\begin{prop}
\label{infphidef}
Let \( N \trianglelefteq G \) with \( N \leq \Phi(G) \) and let \( (K, \kappa) \in \calM_{\Phi(H)}^H \) where \( H := G / N \). Then 
\[
    \Inf_{H}^G \, \varphi_{K, \kappa}^H \, \Def_{H}^G = \varphi_{K', \kappa'}^G,
\]
where \( K' \leq G \) is the full preimage of \( K \) in \(G\), and \( \kappa' \) is  the pullback of \(\kappa\).
\end{prop}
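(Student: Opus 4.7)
The plan is to expand the definition of $\varphi_{K,\kappa}^H$ termwise, push $\Inf_H^G(\cdot)\Def_H^G$ through each summand using the lemmas proved above, and then identify the result with $\varphi_{K',\kappa'}^G$ through a poset bijection between the indexing sets. Concretely, I would start from
\[
\Inf_H^G \varphi_{K,\kappa}^H \Def_H^G = \sum_{\substack{(L',\lambda') \in \calM_{\Phi(H)}^H \\ (K,\kappa)\le (L',\lambda')}} \mu_{\unlhd H}(K,L') \, \Inf_H^G \widetilde{e_{(L',\lambda')}^H} \Def_H^G
\]
and simplify each summand as follows. By Lemma \ref{etilde}, $\widetilde{e_{(L',\lambda')}^H} = \widetilde{e_H^H}\, e_{(L',\lambda')}^H$. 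Using \ref{elemopontilde}(b) to move $\widetilde{e_H^H}$ past $\Inf_H^G$, the summand becomes $\widetilde{e_G^G}\, \Inf_H^G e_{(L',\lambda')}^H \Def_H^G$. Writing $L := \pi^{-1}(L')$ for the full preimage under $\pi\colon G\to G/N$ and $\lambda := \lambda'\circ\pi|_L$ for the pullback, the condition $L\cap N = N \le \ker\lambda$ holds by construction, so Lemma \ref{Yande}(c) (applied with $M=N$) collapses the central factor to $e_{(L,\lambda)}^G$.

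The next step is to ensure that $(L,\lambda) \in \calM_{\Phi(G)}^G$, so that Lemma \ref{etilde} applies again to give $\widetilde{e_G^G}\, e_{(L,\lambda)}^G = \widetilde{e_{(L,\lambda)}^G}$. This is exactly where the hypothesis $N\le \Phi(G)$ enters: every maximal subgroup of $G$ then contains $N$, so the maximal subgroups of $H = G/N$ are precisely the quotients $M/N$ of maximal subgroups $M$ of $G$, giving $\Phi(H) = \Phi(G)/N$. Consequently $L' \le \Phi(H)$ forces $L = \pi^{-1}(L') \le \Phi(G)$, as required. After this reduction,
\[
\Inf_H^G \varphi_{K,\kappa}^H \Def_H^G = \sum_{\substack{(L',\lambda') \in \calM_{\Phi(H)}^H \\ (K,\kappa)\le (L',\lambda')}} \mu_{\unlhd H}(K,L') \, \widetilde{e_{(L,\lambda)}^G}.
\]

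Finally, I would match this sum with $\varphi_{K',\kappa'}^G$. The assignment $(L',\lambda') \mapsto (L,\lambda)$ is an order-preserving bijection between the indexing sets $\{(L',\lambda') \in \calM_{\Phi(H)}^H : (L',\lambda') \ge (K,\kappa)\}$ and $\{(L,\lambda) \in \calM_{\Phi(G)}^G : (L,\lambda) \ge (K',\kappa')\}$. Order preservation is clear, and surjectivity uses the fact that any $(L,\lambda) \ge (K',\kappa')$ satisfies $N\le K'\le L$ and $\lambda|_{K'} = \kappa'$ with $\kappa'|_N = 1$, so $\lambda$ descends through $G/N$. Since Möbius values depend only on the underlying posets of normal subgroups, the lattice isomorphism $L \mapsto L/N$ between normal subgroups of $G$ containing $K'$ and normal subgroups of $H$ containing $K$ gives $\mu_{\unlhd G}(K',L) = \mu_{\unlhd H}(K,L')$, completing the identification.

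The main obstacle is keeping the bookkeeping clean: one must verify that the correspondence $(L',\lambda')\leftrightarrow (L,\lambda)$ simultaneously preserves the partial order on pairs, preserves Möbius values, and exhausts all pairs above $(K',\kappa')$ in $\calM_{\Phi(G)}^G$. The identity $\Phi(G/N) = \Phi(G)/N$, valid precisely because $N\le\Phi(G)$, is the structural input that makes this work; the rest is a direct application of Lemmas \ref{etilde}, \ref{elemopontilde}(b), and \ref{Yande}(c).
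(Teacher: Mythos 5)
Your proposal follows essentially the same route as the paper: expand $\varphi_{K,\kappa}^H$ termwise, use Lemma \ref{etilde} and Section \ref{elemopontilde}(b) to transport the $\widetilde{e}$ factors through $\Inf$/$\Def$, collapse each central factor $\Inf_H^G e_{(L',\lambda')}^H\Def_H^G$ to $e_{(L,\lambda)}^G$ via Lemma \ref{Yande}(c), and then match the resulting sum with $\varphi_{K',\kappa'}^G$ using the correspondence theorem to equate Möbius values. Your version spells out two points the paper leaves implicit --- the identity $\Phi(G/N)=\Phi(G)/N$ and the order-preserving bijection of index sets --- but the argument and the ingredients are the same.
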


\begin{proof} 
    For any summand $\tildeeLl$ of $\varphi_{K,\kappa}^H$, we have \( K \leq L \unlhd H \) and $L\leq \Phi(H)$. Additionally, by the correspondence theorem, \( \mu_{\unlhd H}(K,L) = \mu_{\unlhd G}(K', L') \). Now, by using the relations in \ref{elemopontilde}(b) together with Lemma \ref{infdefonipot}, we show that each summand on the left-hand side corresponds uniquely to a summand on the right-hand side; hence, the two sums are equal.
    \begin{eqnarray*}
        \Inf_{H}^G \widetilde{e_{L,\lambda}^H}\Def_{H}^G  &=&  \Inf_{H}^G\cdot \widetilde{e_{H}^{H}} \cdot  e_{(L,\lambda)}^H\cdot  \widetilde{e_{H}^{H}} \cdot \Def_{H}^G \\
        &=& \widetilde{e_G^G}\cdot  \Inf_{H}^G \cdot e_{(L,\lambda)}^H\cdot  \Def_{H}^G \cdot \widetilde{e_G^G} \\
        &=& \widetilde{e_G^G}\cdot  e_{(L',\lambda')}^G \cdot \widetilde{e_G^G} \\
    \end{eqnarray*}
    where $K'\leq L'$ and $\lambda'\mid_{K'}=\kappa'$. Note that $K'\leq \Phi(G)$ as $N$ is also a subgroup of $\Phi(G)$.
\end{proof}
\begin{remarki}
    One may compare $\varphi_{K, \kappa}^G$ with the idempotent $f_{(K,\kappa)}$ in \cite{fibered biset}. Clearly $\varphi_{K,\kappa}^G$ is a summand of the latter one. The restriction on the terms is what we need to obtain a block decomposition of the fibered $p$-biset category.
\end{remarki}
\begin{prop}
    \label{phikappaiso}
    Let \( G \) and \( H \) be finite groups. For a given pair \( (K, \kappa) \in \calM_{\Phi(G)}^G \), we have
    \begin{enumerate}[(a)]
    \item  \begin{align*}
        \varphi_{K,\kappa}^G = \widetilde{e_G^G}\left(\sum_{(K,\kappa)\leq  (L,\lambda)\in \calM_{\Phi(G)}^G} \mu_{\unlhd G} (K,L) e_{(L,\lambda)}^G\right) \\
        = \left(\sum_{(K,\kappa)\leq  (L,\lambda)\in \calM_{\Phi(G)}^G} \mu_{\unlhd G} (K,L) e_{(L,\lambda)}^G\right)\widetilde{e_G^G}.
    \end{align*}
    \item In particular
    \begin{equation}
        \label{varphiexplicitly}
        \varphi_{K,\kappa}^G = \frac{1}{|G|}\sum_{\substack{\Phi(G) \leq X\leq G\\ (K,\kappa)\leq (L,\lambda)\in \calM_{\Phi(G)}^G}} |X| \mu(X,G)\mu_{\unlhd G} (K,L) \Ind_X^G e_{(L,\lambda)}^X \Res_X^G.
    \end{equation}
    Here $e_{(L,\lambda)}^X$ denotes the idempotent $\Res^G_Xe_{(L,\lambda)}^G\Ind_X^G$.
    \item Let $f:G\rightarrow H$ be a group isomorphism. Then
    \[
     \Iso(f) \varphi_{K,\kappa}^G  = \varphi_{f(K),\kappa \circ f^{-1} }^H \Iso(f). 
    \]
    \item Let $p$ be a prime number. Suppose that $P$ is a finite $p$-group and $(K, \kappa)\in\calM_P^*$. Then $K$ is a cyclic central subgroup of $P$ and 
    \[
        \varphi_{K,\kappa}^P = \widetilde{e_P^P} \left( \sum_{\substack{(K,\kappa) \leq (L,\lambda) \in \calM_{\Phi(P)}^{P} \\ L/K \leq \Omega_1 Z(P/K)}} (-1)^{r_L} p^{\binom{r_L}{2}}\, e_{(L,\lambda)}^P \right),
    \]
    where $\Omega_1 Z(P/K)$ denotes the largest elementary abelian central subgroup of $P/K$, and $r_L =\text{rank}(L/K)$.
    \end{enumerate}
\end{prop}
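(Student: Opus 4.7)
Part (a) is immediate from Lemma~\ref{etilde}: each summand $\tildeeLl$ in the defining expression of $\varphi_{K,\kappa}^G$ has $L \le \Phi(G)$, hence equals $\widetilde{e_G^G} \cdot e_{(L,\lambda)}^G = e_{(L,\lambda)}^G \cdot \widetilde{e_G^G}$, and one factors $\widetilde{e_G^G}$ out on either side. Part (c) is pure naturality: the isomorphism $f$ induces a poset isomorphism $\calM_{\Phi(G)}^G \liso \calM_{\Phi(H)}^H$ via $(L,\lambda) \mapsto (f(L),\lambda\circ f^{-1})$ that preserves the M\"obius function $\mu_{\unlhd}$, and the basis bisets (together with $\widetilde{e_G^G}$) transport through $\Iso(f)$ in the obvious way. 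Summing term-by-term gives the stated identity.

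For Part (b), I expand $\widetilde{e_G^G} = \tfrac{1}{|G|}\sum_{X \le G}|X|\mu(X,G) \Ind_X^G \Res_X^G$ inside the expression from (a). By Hall's theorem on the subgroup M\"obius function, $\mu(X,G) = 0$ unless $X$ is an intersection of maximal subgroups, so only $X \ge \Phi(G)$ contribute. For such $X$ and any $L \le \Phi(G) \le X$ in the range of summation, a direct Mackey computation yields the intertwining identity $e_{(L,\lambda)}^G \Ind_X^G = \Ind_X^G e_{(L,\lambda)}^X$, since the product stabilizer $\Delta_L(G) * \Delta(X)$ collapses to $\Delta_L(X) \le X \times X$ and the characters agree. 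Substituting and reordering produces the displayed formula.

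Part (d) is the most substantive. Starting from (a), I group the summands by $L$ and factor $\sum_\lambda e_{(L,\lambda)}^P$ inside, where the inner sum runs over $P$-invariant extensions $\lambda$ of $\kappa$ with $(L,\lambda) \in \calM_{\Phi(P)}^P$. Since $\kappa \in K^*$ is faithful and $P$-invariant, $\kappa({}^{p}k) = \kappa(k)$ forces $[P,K] \le \ker\kappa = 1$, so $K \le Z(P)$ and $\Omega_1 Z(P/K)$ is well-defined. The essential input, adapted from \cite{double}, is the following $p$-group M\"obius lemma: in the poset of normal subgroups of $P$ between $K$ and $\Phi(P)$, $\mu_{\unlhd P}(K,L) = 0$ unless $L/K \le \Omega_1 Z(P/K)$, and in that range $\mu_{\unlhd P}(K,L) = \mu(1, L/K)$, the ordinary subgroup M\"obius function of $L/K$. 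When $L/K$ is central in $P/K$, conjugation by $P$ acts trivially on $L/K$ and hence on its character group, so every extension of $\kappa$ to $L$ is automatically $P$-invariant; the inner $\lambda$-sum then ranges unrestrictedly over all such extensions, yielding (d).

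The principal technical obstacle is the M\"obius identity invoked in (d): one must verify that Bouc's classical argument for the normal-subgroup poset of a $p$-group survives the additional constraint $L \le \Phi(P)$, and cleanly relate the normal-subgroup M\"obius function to the ordinary subgroup M\"obius function on central elementary abelian quotients. Everything else reduces to Lemma~\ref{etilde} and the Mackey product formula.
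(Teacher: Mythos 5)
Your treatment of parts (a), (b), and (c) matches the paper's argument: (a) is an immediate consequence of Lemma~\ref{etilde}; (b) follows by expanding $\widetilde{e_G^G}$, using that $\mu(X,G)=0$ unless $X\ge\Phi(G)$, and the intertwining $\big[\tfrac{G\times G}{\Delta(X),1}\big]\,e_{(L,\lambda)}^G=\Ind_X^Ge_{(L,\lambda)}^X\Res_X^G$; (c) is the routine conjugation computation, which you phrase as functoriality. For (d) your core argument is also the same as the paper's: $\mu_{\unlhd P}(K,L)=\mu_{\unlhd P/K}(1,L/K)$, and this vanishes unless $L/K\le\Omega_1 Z(P/K)$, in which range it equals $\mu(1,L/K)$ by Bouc's lemma (the paper cites \cite[Lemma 6.2.10]{biset functor}). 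Restricting the index set in (a) then gives (d) directly.

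However, your closing sentence for (d) is incorrect. You assert that when $L/K\le Z(P/K)$, conjugation by $P$ acts trivially on $(L/K)^\ast$ and therefore \emph{every} extension of $\kappa$ to $L$ is $P$-invariant, so that the $\lambda$-sum is unrestricted. This does not follow: the set of extensions of $\kappa$ is a torsor under $(L/K)^\ast$, and $P$ indeed acts trivially on the structure group, but that does not force $P$ to act trivially on the torsor. Concretely, for an extension $\lambda$ one has $^{p}\lambda=\lambda\cdot\chi_p$ with $\chi_p\in(L/K)^\ast$, and $P$ fixes every extension if and only if it fixes one, which happens if and only if $[P,L]\le\ker\kappa=1$, i.e.\ $L\le Z(P)$. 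The hypothesis $L/K\le Z(P/K)$ with $K\le Z(P)$ only gives $[P,L]\le K$, i.e.\ $L\le Z_2(P)$, not $L\le Z(P)$. What is true is an all-or-nothing phenomenon forced by faithfulness of $\kappa$: either all extensions are $P$-invariant (when $L\le Z(P)$), or none are (when $L\not\le Z(P)$), in which case the subgroup $L$ simply contributes no term to the sum. The displayed formula in (d) retains the constraint $(L,\lambda)\in\calM_{\Phi(P)}^P$ (hence $P$-invariance of $\lambda$) in its index set, so no such claim is needed; the M\"obius lemma alone suffices. Your proof is therefore correct once this extraneous, erroneous step is deleted.
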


\begin{proof} (a) Follows directly from Lemma \ref{etilde}.

\smallskip

\noindent (b)
Using the expansion of $\widetilde{e_G^G}$ in Section~\ref{section:eGG} we write the equation in part (a) as
\[
\varphi_{K,\kappa}^G = \frac{1}{|G|}\sum_{\substack{X\leq G\\ (K,\kappa)\leq (L,\lambda)\in \calM_{\Phi(G)}^G}} |X| \mu(X,G)\mu_{\unlhd G} (K,L) \Bigl[ \frac{G\times G}{\Delta(X),1} \Bigr] e_{(L,\lambda)}^G.
\]
By Theorem 2.3 of~\cite{H} \(\mu(X,G)=0\) unless \(\Phi(G)\leq X\). Hence, we only consider terms such that \(L \leq \Phi(G) \leq X\) for which we have \(\Delta(X) * \Delta_L(G) = \Delta_L(X).\) Thus,
\[
\Bigl[ \frac{G\times G}{\Delta(X),1} \Bigr] e_{(L,\lambda)}^G 
= \Bigl[ \frac{G\times G}{\Delta_L(X), \phi_{\lambda}} \Bigr]
= \Ind_X^G e_{(L,\lambda)}^X \Res_X^G,
\]
thereby completing the proof.
    
\smallskip

\noindent (c)  To prove this assertion we calculate
    \[
        \Iso(f)\varphi_{K,\kappa}^G \Iso(f^{-1}).
    \]
    Expanding the product and the following calculations give the result as follows.
    \begin{eqnarray*}
        & &\Iso(f)\varphi_{K,\kappa}^G \Iso(f^{-1})=\\
        &=&\Iso(f)\widetilde{e_G^G}\Iso(f^{-1})\Iso(f) \sum_{(K,\kappa)\leq  (L,\lambda)\in \calM_{\Phi(G)}^G} \mu_{\unlhd G}(K,L) e_{(L,\lambda)}^G \Iso(f^{-1}) \\
        &=& \widetilde{e_H^H} \sum_{(K,\kappa)\leq  (L,\lambda)\in \calM_{\Phi(G)}^G} \mu_{\unlhd G}(K,L) \Iso(f) e_{(L,\lambda)}^G \Iso(f^{-1}) \\
        &=& \widetilde{e_H^H} \sum_{(K,\kappa)\leq  (L,\lambda)\in \calM_{\Phi(G)}^G} \mu_{\unlhd G}(K,L)e_{(f(L),\lambda \circ f^{-1})}^H \\
        &=& \widetilde{e_H^H} \sum_{(K,\kappa)\leq  (L,\lambda)\in \calM_{\Phi(G)}^G} \mu_{\unlhd H}(f(K),f(L))e_{(f(L),\lambda \circ f^{-1})}^H \\
        &=& \varphi_{f(K),\kappa \circ f^{-1}}^H
    \end{eqnarray*}
\smallskip
\noindent (d)  Note that since $\kappa \in K^*$ is a faithful character, $K$ is a central and cyclic subgroup of $P$. Moreover, we have
    \[
        \mu_{\unlhd P}(K,L) = \mu_{\unlhd P/K}(1,L/K).
    \]
    By the argument in \cite[Lemma 6.2.10]{biset functor}, it follows that $\mu_{\unlhd P/K}(1,L/K) = 0$ unless $L/K$ is a central elementary abelian subgroup of $P/K$. In this case, we also have
    \[
        \mu_{\unlhd P/K}(1,L/K) = \mu(1,L/K)= (-1)^r p^{\binom{r}{2}},
    \]
  where $r = \operatorname{rank}(L/K)$.
\end{proof}

\begin{coro}
\label{elementarytimesphi}
Let $G$ be a finite group, and $(K,\kappa)\in \calM_{\Phi(G)}^G$. Then
    \begin{enumerate} [a)]
        \item $\Res_H^G \varphi_{K,\kappa}^G = 0$ and $\varphi_{K,\kappa}^G \Ind_H^G = 0$ for all proper subgroups $H$ of $G$.
        \item $\Def_{G/M}^G \varphi_{K,\kappa}^G = 0$ and $\varphi_{K,\kappa}^G\Inf_{G/M}^G = 0$ unless $M\cap \Phi(G) \leq K$ and $M\cap K\le \ker \kappa$.
    \end{enumerate}
\end{coro}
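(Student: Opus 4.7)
The plan is to handle the two parts separately. Part (a) is immediate from Proposition \ref{phikappaiso}(a) together with the identities in \ref{elemopontilde}(a): since $\varphi_{K,\kappa}^G = \widetilde{e_G^G}\cdot(\sum_{(L,\lambda)}\mu_{\unlhd G}(K,L)\,e_{(L,\lambda)}^G) = (\sum_{(L,\lambda)}\mu_{\unlhd G}(K,L)\,e_{(L,\lambda)}^G)\cdot\widetilde{e_G^G}$, the relations $\Res_H^G\widetilde{e_G^G}=0$ and $\widetilde{e_G^G}\Ind_H^G=0$ for $H\lneq G$ annihilate the two respective multiplications. For part (b) I will only prove $\Def_{G/M}^G\varphi_{K,\kappa}^G=0$; the other statement follows by taking opposite bisets, since $\widetilde{e_G^G}$ and each $e_{(L,\lambda)}^G$ are self-opposite while $\Def$ and $\Inf$ swap under that anti-involution.

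To exploit the Frattini hypothesis, set $N:=M\cap\Phi(G)\unlhd G$ and factor $\Def_{G/M}^G = \Def_{G/M}^{G/N}\circ\Def_{G/N}^G$, which reduces the task to showing $\Def_{G/N}^G\varphi_{K,\kappa}^G=0$. Note that $K\le\Phi(G)$ gives $M\cap K = N\cap K$, so the hypothesis translates into $N\not\le K$ or $N\cap K\not\le\ker\kappa$. Since $N\le\Phi(G)$, the identity $\Def_{G/N}^G\widetilde{e_G^G}=\widetilde{e_{G/N}^{G/N}}\Def_{G/N}^G$ from \ref{elemopontilde}(b) applies; combining it with Lemma \ref{infdefonipot}(b) (and its vanishing clause) rewrites the expression as
\[
\Def_{G/N}^G\varphi_{K,\kappa}^G \;=\; \widetilde{e_{G/N}^{G/N}}\Biggl(\sum_{\substack{(K,\kappa)\leq(L,\lambda)\\ L\cap N\le\ker\lambda}}\mu_{\unlhd G}(K,L)\,e_{(LN/N,\bar\lambda)}^{G/N}\Biggr)\Def_{G/N}^G.
\]
If $N\cap K\not\le\ker\kappa$, then for every $(L,\lambda)\ge(K,\kappa)$ one has $K\cap N\le L\cap N$ and $\lambda|_{K\cap N}=\kappa|_{K\cap N}\ne 1$, so $L\cap N\not\le\ker\lambda$ and the summation set is empty.

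The core of the argument is the remaining case $N\not\le K$ with $N\cap K\le\ker\kappa$. I would group the summands by their image $(\bar L,\bar\lambda)=(LN/N,\bar\lambda)$; for each fixed image, letting $L_0$ denote the preimage of $\bar L$ in $G$, the $L$'s in the fibre are precisely the normal subgroups of $G$ in $[K,L_0]$ with $LN=L_0$ (and $\lambda$ is then forced to be the pullback of $\bar\lambda$). The coefficient of $e_{(\bar L,\bar\lambda)}^{G/N}$ thus reduces to
\[
f(L_0) \;:=\; \sum_{\substack{L\unlhd G,\,K\le L\le L_0\\ LN=L_0}}\mu_{\unlhd G}(K,L),
\]
and it suffices to show $f(L_0)=0$ for every $L_0\ge KN$. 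Partitioning $\{L\unlhd G:K\le L\le L_0\}$ according to $L\mapsto LN$ yields
\[
\sum_{M\in[KN,L_0]}f(M) \;=\; \sum_{\substack{L\unlhd G\\K\le L\le L_0}}\mu_{\unlhd G}(K,L) \;=\; \delta_{K,L_0},
\]
and the right-hand side vanishes on the entire range $[KN,\infty)$ precisely because $N\not\le K$ places $K$ strictly below $KN$. Möbius inversion inside the subposet $\{M\unlhd G:M\ge KN\}$ then forces $f(L_0)=0$ identically. The main obstacle is pinning down this Möbius identity in the normal-subgroup poset and recognising that the hypothesis $N\not\le K$ is exactly what removes the sole non-zero value $\delta_{K,K}$ from the range of inversion; once that is in place, the rest of the argument is bookkeeping.
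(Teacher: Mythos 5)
Your proposal is correct and follows essentially the same route as the paper's own proof: part (a) via the expression of $\varphi_{K,\kappa}^G$ as $\widetilde{e_G^G}$ times a sum (and vice versa) combined with the vanishing in \ref{elemopontilde}(a); part (b) by factoring deflation through $G/(M\cap\Phi(G))$, splitting into the two cases according to whether $M\cap K\le\ker\kappa$, invoking Lemma \ref{infdefonipot}(b) to push the deflation past each $e_{(L,\lambda)}^G$, and then killing the coefficients by a Möbius identity over the fibres $L\mapsto LN$. The only cosmetic difference is that you spell out the Möbius inversion step that the paper compresses into ``by the properties of the Möbius function,'' and you note explicitly that the inflation statement follows by taking opposites.
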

\begin{proof} We follow Bouc's proof of \cite[Corollary 3.7]{double}. (a) This follows directly from Section \ref{elemopontilde}.
        
\smallskip

\noindent (b) By transitivity of deflation, we can write:
        \[
        \Def_{G/M}^G = \Def_{G/M}^{G/(M\cap \Phi(G))} \Def_{G/(M\cap \Phi(G))}^G.
        \]
        Thus, it suffices to show that $\Def_{G/(M\cap \Phi(G))}^G \varphi_{K,\kappa}^G = 0$. Assuming now that $M \leq \Phi(G)$, we compute:
        \begin{eqnarray*}
            \Def_{G/M}^G \varphi_{K,\kappa}^G &=& \Def_{G/M}^G \left(\sum_{\substack{(K,\kappa)\leq (L,\lambda)\in \calM_{\Phi(G)}^G}} \mu_{\unlhd G} (K,L) e_{(L,\lambda)}^G  \right)\widetilde{e_G^G} \\
            &=& \left(\sum_{\substack{(K,\kappa)\leq (L,\lambda)\in \calM_{\Phi(G)}^G}} \mu_{\unlhd G} (K,L) \Big ( \Def_{G/M}^G e_{(L,\lambda)}^G \Big ) \right)\widetilde{e_G^G}.
        \end{eqnarray*}

        If $M\cap K \not\leq \widehat{K}$, then by the poset structure on $\calM_{\Phi(G)}^G$, we have $M\cap L \not\leq \widehat{L}$ for all $(L,\lambda) \geq (K,\kappa)$. Consequently, by Lemma \ref{infdefonipot} all terms in the sum vanish. Now suppose instead that $M\cap K \leq \widehat{K}$, in which case some terms might be nonzero. Then we rewrite the last equality using Lemma \ref{infdefonipot} as
        \begin{eqnarray*}
            & &\Def_{G/M}^G \varphi_{K,\kappa}^G=\\
            &=& \left(\sum_{\substack{(K,\kappa)\leq (L,\lambda)\in \calM_{\Phi(G)}^{G}\\ L\cap M\leq \widehat{L}}} \mu_{\unlhd G} (K,L) e_{(LM/M,\bar{\lambda})}^{G/M}  \Def_{G/M}^G  \right)\widetilde{e_G^G} \\
            &=& \sum_{\substack{(KM,\kappa') \leq (T,\tau) \in \calM_{\Phi(G)}^G\\ M\leq \widehat{T}}}
            \left(\sum_{\substack{(K,\kappa)\leq (L,\lambda)\in \calM_{\Phi(G)}^G\\ LM=T}} \mu_{\unlhd G} (K,L) \right) e_{(T/M,\bar{\tau})}^{G/M}\Def_{G/M}^G\widetilde{e_G^G}.
        \end{eqnarray*}
        Here, we define $\kappa' \colon KM \to \CC^{\times}$ by $\kappa'(km) = \kappa(k)$. By construction, we also have $\tau|_L = \lambda$, ensuring that for each $L \geq K$ such that $LM = T$, there exists a unique $\lambda$. Hence the inner sum runs only through the normal subgroups of $G$ over $K$. Now we apply~\cite[Proposition 4]{R} to the lattice of normal subgroups of $G$ over $K$ with the closure operation mapping $L$ to $LM$ for any $K\leq L \unlhd G$. It gives that the coefficient of $e_{(T/M,\bar{\tau})}^{G/M}$ in the above expression is zero unless $KM = K$, which is equivalent to $M \leq K$. This concludes the proof.
\end{proof}

\begin{theorem}
\label{phiortogonal} 
The set of elements $\{\varphi_{K, \kappa}^G \}$, indexed by $\calM_{\Phi(G)}^{G}$, forms a complete set of orthogonal idempotents in the algebra $\calE(G)$. That is, 

\begin{enumerate}
    \item   $\varphi_{K,\kappa}^G \cdot \varphi_{L,\lambda}^G = \delta_{(K,\kappa), (L,\lambda)}
                \varphi_{K,\kappa}^G.$
    \item $\sum_{(K,\kappa)}\varphi_{K,\kappa}^G = \tildeeG.$
\end{enumerate} 
\end{theorem}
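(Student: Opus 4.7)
The plan is to reduce both claims to a clean multiplication identity for the sandwiched elements $\tildeeKk=\tildeeG\cdot e_{(K,\kappa)}^G\cdot\tildeeG$, followed by Möbius inversion on the poset $\calM_{\Phi(G)}^G$. First I would prove that, for any $(K,\kappa),(L,\lambda)\in\calM_{\Phi(G)}^G$,
\[
\tildeeKk\cdot\tildeeLl \;=\; \begin{cases}\widetilde{e_{(KL,\,\kappa\lambda)}^G} & \text{if } \kappa|_{K\cap L}=\lambda|_{K\cap L},\\ 0 & \text{otherwise.}\end{cases}
\]
Lemma~\ref{etilde} gives $\tildeeKk=\tildeeG\cdot e_{(K,\kappa)}^G=e_{(K,\kappa)}^G\cdot\tildeeG$, so $\tildeeG$ commutes with every $e_{(K,\kappa)}^G$ with $K\le\Phi(G)$; combined with the idempotency of $\tildeeG$, the product collapses to $\tildeeG\cdot e_{(K,\kappa)}^G\cdot e_{(L,\lambda)}^G\cdot\tildeeG$, and Proposition~\ref{eKtimeseL} then yields the displayed formula. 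Note $KL\le\Phi(G)$, so the result still lies in $\calE(G)$.

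Next I would invert the defining formula~\eqref{phiKkappa}. The interval $[(K,\kappa),(L,\lambda)]$ in $\calM_{\Phi(G)}^G$ is order-isomorphic to $[K,L]$ in the lattice of normal subgroups of $G$ contained in $\Phi(G)$, since each intermediate $(M,\mu)$ has $\mu=\lambda|_M$ rigidly determined by $M$. Hence the two Möbius functions agree and standard Möbius inversion gives
\[
\tildeeKk \;=\; \sum_{(L,\lambda)\ge (K,\kappa)}\varphi_{L,\lambda}^G.
\]
Part~(2) is then immediate: taking $(K,\kappa)=(1,1)$, the left side equals $\tildeeG$ (because $e_{(1,1)}^G$ is the identity of $B_k^{\ctimes}(G,G)$, so $\widetilde{e_{(1,1)}^G}=\tildeeG$), while the right side ranges over all of $\calM_{\Phi(G)}^G$.

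For part~(1) I would expand $\varphi_{K,\kappa}^G\cdot\varphi_{L,\lambda}^G$, apply the multiplication identity to each pair of summands $\widetilde{e_{(M,\mu)}^G}\cdot\widetilde{e_{(N,\nu)}^G}$, and then re-expand each surviving $\widetilde{e_{(MN,\mu\nu)}^G}$ via the inversion formula as a sum of $\varphi_{R,\rho}^G$ with $(R,\rho)\ge(MN,\mu\nu)$. After swapping the order of summation, the character-compatibility $\mu|_{M\cap N}=\nu|_{M\cap N}$ becomes automatic whenever $\mu=\rho|_M$ and $\nu=\rho|_N$ both come from a common $\rho$, so the coefficient of $\varphi_{R,\rho}^G$ factorizes as
\[
\Bigl(\sum_{K\le M\le R}\mu_{\unlhd G}(K,M)\Bigr)\Bigl(\sum_{L\le N\le R}\mu_{\unlhd G}(L,N)\Bigr)=\delta_{K,R}\,\delta_{L,R},
\]
whenever $(R,\rho)\ge(K,\kappa)$ and $(R,\rho)\ge(L,\lambda)$ (the inner sum being empty otherwise). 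This forces $K=L=R$ and $\kappa=\lambda=\rho$, producing exactly $\delta_{(K,\kappa),(L,\lambda)}\,\varphi_{K,\kappa}^G$.

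The main obstacle I anticipate is the character bookkeeping when interchanging the multiplication identity with Möbius inversion: one has to verify that the inner double sum genuinely decouples into a product of two independent one-variable Möbius sums, which happens precisely because $\mu$ and $\nu$ are each rigidly determined by $\rho$ once $M$ and $N$ are fixed. Once this decoupling is in place, the standard identity $\sum_{K\le M\le R}\mu_{\unlhd G}(K,M)=\delta_{K,R}$ closes the argument.
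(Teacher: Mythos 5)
Your proof is correct and obtains the same preliminary identities as the paper — the collapse
$\tildeeKk\cdot\tildeeLl=\widetilde{e_{(KL,\kappa\lambda)}^G}$ (or $0$) via Lemma~\ref{etilde} and Proposition~\ref{eKtimeseL}, and the Möbius inversion $\tildeeKk=\sum_{(L,\lambda)\ge(K,\kappa)}\varphi_{L,\lambda}^G$ — from which part~(2) follows in the same way by taking $(K,\kappa)=(1,1)$. Where you genuinely diverge is in part~(1): the paper first establishes the intermediate identity $\tildeeKk\cdot\varphi_{L,\lambda}^G=\varphi_{L,\lambda}^G$ or $0$, and then proves orthogonality and idempotency by induction on the total poset depth $d(K,\kappa)+d(L,\lambda)$, much as in \cite[Proposition 4.4]{fibered biset}. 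You instead close the argument in one stroke: expand both $\varphi$'s, apply the multiplication rule, re-expand each surviving $\widetilde{e_{(MN,\mu\nu)}^G}$ back into $\varphi$'s, swap the summation order, and observe that fixing $(R,\rho)$ rigidly determines $\mu=\rho|_M$, $\nu=\rho|_N$, making the compatibility condition $\mu|_{M\cap N}=\nu|_{M\cap N}$ automatic and letting the coefficient decouple into the product $\bigl(\sum_{K\le M\le R}\mu_{\unlhd G}(K,M)\bigr)\bigl(\sum_{L\le N\le R}\mu_{\unlhd G}(L,N)\bigr)=\delta_{K,R}\delta_{L,R}$. This direct double-Möbius computation is shorter and arguably more transparent than the induction, and it avoids introducing the auxiliary statement~(\ref{Ytimesphi}); on the other hand, that auxiliary identity is exactly the tool the paper reuses later (e.g.\ in Corollary~\ref{elementarytimesphi} and Theorem~\ref{phiortogonal2}), so the inductive route doubles as preparation for subsequent arguments, whereas your closed-form computation would have to be supplemented there.
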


\begin{proof} 
Utilizing the results established in Proposition~\ref{eKtimeseL} and Lemma~\ref{etilde}, we evaluate the following expression:
        \begin{equation} \label{Ydeltaproduct}
            \tildeeKk\cdot \widetilde{e_{(L,\lambda)}^G} = \widetilde{e_{G}^G} \cdot e_{(K,\kappa)}^G \cdot e_{(L,\lambda)}^G \cdot \widetilde{e_G^G} = \begin{cases}
               \widetilde{e_{(KL,\kappa\lambda)}^G}, & \text{if } \kappa\mid_{K\cap L} = \lambda\mid_{K\cap L},\\ 0, & \text{otherwise}.
            \end{cases}
        \end{equation}
       Then by Möbius inversion, it follows that
        \begin{equation} \label{Ydeltaassumphi}
            \tildeeKk = \sum_{(K,\kappa) \leq (L,\lambda) \in \calM_{\Phi(G)}^G} \varphi_{L,\lambda}^G.
        \end{equation}
        Additionally, by setting $(K,\kappa) = (1,1)$ in (\ref{Ydeltaassumphi}), we obtain
        \begin{equation*}
            \widetilde{e_G^G} = \sum_{(L,\lambda) \in \calM_{\Phi(G)}^G} \varphi_{L,\lambda}^G.
        \end{equation*}
        The proof proceeds by showing that for all $(K,\kappa), (L,\lambda) \in \calM_{\Phi(G)}^G$,
        \begin{equation}
        \label{Ytimesphi}
            \tildeeKk\cdot \varphi_{L,\lambda}^G = \varphi_{L,\lambda}^G \cdot \tildeeKk = \begin{cases}
                \varphi_{L,\lambda}^G, & \text{if } (K,\kappa) \leq (L,\lambda),\\
                0, & \text{otherwise},
            \end{cases}
        \end{equation}
        and
        \begin{equation*}
            \varphi_{K,\kappa}^G \cdot \varphi_{L,\lambda}^G = \begin{cases}
                \varphi_{K,\kappa}^G, & \text{if } (K,\kappa) = (L,\lambda),\\
                0, & \text{otherwise}.
            \end{cases}
        \end{equation*}
        The details follow arguments similar to those in \cite[Proposition 4.4]{fibered biset}. First, observe that if $(K,\kappa) \leq (L,\lambda)$, then
\begin{eqnarray*}
    \tildeeKk \cdot \varphi_{L,\lambda}^G &=& \tildeeKk \left(\sum_{(L,\lambda)\leq (N,\eta)\in \calM_{\Phi(G)}^G} \mu_{\unlhd G}(L,N) \widetilde{e_{(N,\eta)}^G}\right)\\
    &=& \sum_{(L,\lambda)\leq (N,\eta)\in \calM_{\Phi(G)}^G} \mu_{\unlhd G}(L,N) \tildeeKk \cdot \widetilde{e_{(N,\eta)}^G}\\
    &=& \sum_{(L,\lambda)\leq (N,\eta)\in \calM_{\Phi(G)}^G} \mu_{\unlhd G}(L,N) \widetilde{e_{(N,\eta)}^G} = \varphi_{L,\lambda}^G
\end{eqnarray*}
by equation (\ref{Ydeltaproduct}). Additionally, note that $\tildeeKk$ and $\varphi_{L,\lambda}^G$ commute since $\tildeeKk$ and $\widetilde{e_{(N,\eta)}^G}$ commute for all $(N,\eta) \geq (L,\lambda)$. Next, we establish the remaining statement of the proposition using induction on $d := d(K,\kappa) + d(L,\lambda)$, where $d(K,\kappa)$ is defined as the maximum $n \in \mathbb{N}_0$ such that there exists a chain $(K,\kappa) = (K_0,\kappa_0) < \cdots < (K_n,\kappa_n)$ in $\calM_{\Phi(G)}^G$. If $d = 0$, then $(K,\kappa)$ and $(L,\lambda)$ are maximal in $\calM_{\Phi(G)}^G$. Consequently, we have $\varphi_{K,\kappa}^G = \tildeeKk$ and $\varphi_{L,\lambda}^G = \widetilde{e_{(L,\lambda)}^G}$. It remains to demonstrate that $\tildeeKk \cdot \widetilde{e_{(L,\lambda)}^G} = 0$ for $(K,\kappa) \neq (L,\lambda)$. Assuming otherwise, equation (\ref{Ydeltaproduct}) implies that $\kappa\mid_{K\cap L} = \lambda\mid_{K\cap L}$ and that there exists a pair $(KL,\kappa\lambda) \in \calM_{\Phi(G)}^G$ with $(K,\kappa) \leq (KL,\kappa\lambda) \geq (L,\lambda)$. Given that $(K,\kappa)$ and $(L,\lambda)$ are maximal in $\calM_{\Phi(G)}^G$, we deduce $(K,\kappa) = (KL,\kappa\lambda) = (L,\lambda)$. Now, suppose $d > 0$ and that the proposition holds for smaller values of $d$. We first establish that if $\varphi_{K,\kappa}^G \varphi_{L,\lambda}^G \neq 0$, then $(K,\kappa) = (L,\lambda)$. Indeed, using the definition of the idempotents $\varphi_{K,\kappa}^G$ and $\varphi_{L,\lambda}^G$ given by Equation (\ref{phiKkappa}) and applying Equation (\ref{Ydeltaproduct}) yields:
\begin{equation*}
    \varphi_{K,\kappa}^G \cdot \varphi_{L,\lambda}^G \in \text{span}_k\{\widetilde{e_{(M,\mu)}^G}\mid (KL,\kappa\lambda) \leq (M,\mu) \in \calM_{\Phi(G)}^G\}\subseteq \mathcal E_G.
\end{equation*}
This implies that $\widetilde{e^G_{(KL,\kappa\lambda)}}\cdot \varphi_{K,\kappa}^G\cdot \varphi_{L,\lambda}^G = \varphi_{K,\kappa}^G\cdot \varphi_{L,\lambda}^G$ by equation (\ref{Ydeltaproduct}). Assuming $(K,\kappa) \neq (L,\lambda)$, we have $(K,\kappa) < (K L, \kappa \lambda)$ or $(L,\lambda) < (K L, \kappa \lambda)$. By induction, we obtain either $\widetilde{e_{(KL,\kappa\lambda)}^G}\cdot \varphi_{K,\kappa}^G = 0$ or $\widetilde{e_{(KL,\kappa\lambda)}^G}\cdot \varphi_{L,\lambda}^G = 0$. In either case, we reach a contradiction, concluding that $(K,\kappa) = (L,\lambda)$. Next, assuming $(K,\kappa) = (L,\lambda)$, we use Equations (\ref{Ydeltaproduct}) and (\ref{Ydeltaassumphi}) to conclude:
\begin{equation*}
    \tildeeKk = (\tildeeKk)^2 = \left( \sum_{(K,\kappa) \leq (K',\kappa') \in \calM_{\Phi(G)}^G} \varphi_{K',\kappa'}^G \right)^2.
\end{equation*}
Expanding this sum and comparing it with equation (\ref{Ydeltaassumphi}) yields $\varphi_{K,\kappa}^G \cdot\varphi_{K,\kappa}^G = \varphi_{K,\kappa}^G$, proving idempotency. Finally, from equation (\ref{Ydeltaassumphi}), we derive:
\begin{equation*}
    \tildeeKk\cdot \varphi_{L,\lambda}^G = \sum_{(K,\kappa) \leq (K',\kappa') \in \calM_{\Phi(G)}^G} \varphi_{K',\kappa'}^G\cdot \varphi_{L,\lambda}^G.
\end{equation*}
Using induction and previously established results, the sum evaluates to $\varphi_{L,\lambda}^G$ if $(K,\kappa) \leq (L,\lambda)$ and 0 otherwise.
\end{proof}

\begin{theorem}\label{phiortogonal2}
Let $(K,\kappa)\in \calM_{\Phi(G)}^G$ and $H$ be a finite group.
        \begin{enumerate}[a)]
            \item If $(U,\upsilon) \in \calM_{G\times H}$, then $\varphi_{K,\kappa}^G \Big[ \frac{G\times H}{U,\upsilon}\Big] = 0$ unless $p_1(U) = G$ and $k_1(U)\cap \Phi(G) \leq K$ and $\kappa\mid_{k_1(U)\cap K}=\upsilon_1\mid_{k_1(U)\cap K}$.
            \item If $(U',\upsilon') \in \calM_{H\times G}$, then $\Big[ \frac{H\times G}{U',\upsilon'}\Big] \varphi_{K,\kappa}^G = 0$ unless $p_2(U') = G$ and $k_2(U')\cap \Phi(G) \leq K$ and $\kappa\mid_{k_2(U')\cap K}=\upsilon'_2\mid_{k_2(U')\cap K}$.
        \end{enumerate}

\end{theorem}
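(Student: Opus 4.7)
The plan for part (a) is to decompose $[G\times H/(U,\upsilon)]$ via its left invariants $(P, K_U, \kappa_U) := (p_1(U), k_1(U), \upsilon_1)$ and reduce the claim to an algebraic identity inside $\calE(G)$; part (b) will then follow by passing to opposite bisets. First I will apply the factorization $[G\times H/(U,\upsilon)] = \Ind_P^G\cdot[P\times H/(U,\upsilon)]$ coming from~(\ref{basiselementdecomposition}); if $P\lneq G$ then $\varphi_{K,\kappa}^G\Ind_P^G = 0$ by Corollary~\ref{elementarytimesphi}(a) and that case is settled. Assuming $P = G$, I will establish the key identity
\[
e_{(K_U,\kappa_U)}^G\cdot[G\times H/(U,\upsilon)] = [G\times H/(U,\upsilon)]
\]
by a short Mackey computation: the double coset $p_2(\Delta_{K_U}(G))\backslash G/p_1(U) = G\backslash G/G$ is a single point, the compatibility clause of the Mackey formula holds automatically since the right invariant character of $e_{(K_U,\kappa_U)}^G$ coincides with the left invariant character $\kappa_U$ of $(U,\upsilon)$, and $\Delta_{K_U}(G)*U = U$ with unchanged star-product character because $K_U\times\{1\}\le U$. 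It therefore suffices to prove the algebraic identity $\varphi_{K,\kappa}^G\cdot e_{(K_U,\kappa_U)}^G = 0$ whenever $K_U\cap\Phi(G)\not\le K$ or $\kappa|_{K_U\cap K}\ne\kappa_U|_{K_U\cap K}$.

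I will treat the two failure modes separately. Suppose first that $\kappa|_{K_U\cap K}\ne\kappa_U|_{K_U\cap K}$. Expanding $\varphi_{K,\kappa}^G$ via Proposition~\ref{phikappaiso}(a) and applying Proposition~\ref{eKtimeseL} termwise, any nonvanishing contribution $e_{(L,\lambda)}^G\cdot e_{(K_U,\kappa_U)}^G$ with $(K,\kappa)\le(L,\lambda)\in\calM_{\Phi(G)}^G$ would force $\lambda = \kappa_U$ on $L\cap K_U\supseteq K\cap K_U$, which combined with $\lambda|_K = \kappa$ yields $\kappa = \kappa_U$ on $K\cap K_U$, a contradiction. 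Suppose next that $K_U\cap\Phi(G)\not\le K$. Set $\tilde K := K_U\cap\Phi(G)$ and $\tilde\kappa := \kappa_U|_{\tilde K}$; the pair $(\tilde K,\tilde\kappa)$ lies in $\calM_{\Phi(G)}^G$, since $K_U\unlhd G$ and $\kappa_U$ is $G$-invariant (the latter because $\ctimes$ is abelian and $U$ projects onto $G$), and these properties descend to $\tilde K$ and $\tilde\kappa$. Proposition~\ref{eKtimeseL} then yields $e_{(\tilde K,\tilde\kappa)}^G\cdot e_{(K_U,\kappa_U)}^G = e_{(K_U,\kappa_U)}^G$, so
\[
\varphi_{K,\kappa}^G\cdot e_{(K_U,\kappa_U)}^G = \varphi_{K,\kappa}^G\cdot e_{(\tilde K,\tilde\kappa)}^G\cdot e_{(K_U,\kappa_U)}^G = \varphi_{K,\kappa}^G\cdot\widetilde{e_{(\tilde K,\tilde\kappa)}^G}\cdot e_{(K_U,\kappa_U)}^G,
\]
where the last equality uses Lemma~\ref{etilde} together with $\varphi_{K,\kappa}^G\widetilde{e_G^G} = \varphi_{K,\kappa}^G$. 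Equation~(\ref{Ytimesphi}) from Theorem~\ref{phiortogonal} now closes the case: since $\tilde K\not\le K$ forces $(\tilde K,\tilde\kappa)\not\le(K,\kappa)$, we conclude $\varphi_{K,\kappa}^G\cdot\widetilde{e_{(\tilde K,\tilde\kappa)}^G} = 0$.

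Part (b) is derived from (a) by passing to opposite bisets, exploiting that $\widetilde{e_G^G}$, $e_{(K,\kappa)}^G$ and hence $\varphi_{K,\kappa}^G$ are all self-opposite (as noted at the end of Lemma~\ref{etilde}'s proof). The main obstacle is verifying the key identity $e_{(K_U,\kappa_U)}^G\cdot[G\times H/(U,\upsilon)] = [G\times H/(U,\upsilon)]$, which requires careful reconciliation with the paper's convention $\upsilon|_{K\times L} = \kappa\times\lambda^{-1}$ when checking the Mackey compatibility clause; once that identity is in hand, everything else amounts to bookkeeping inside the sum defining $\varphi_{K,\kappa}^G$.
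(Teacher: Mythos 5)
Your proof is correct and follows a genuinely different route from the paper's. The paper expands $\varphi_{K,\kappa}^G\bigl[\tfrac{G\times H}{U,\upsilon}\bigr]$ directly via the Mackey formula into a sum of terms $\bigl[\tfrac{G\times H}{(L\times 1)U,\lambda\cdot\upsilon}\bigr]$ indexed by $(L,\lambda)\ge(K,\kappa)$, then crushes the sum by a M\"obius-function argument (collecting terms with equal $(L\times 1)U = (LM\times 1)U$ where $M := k_1(U)\cap\Phi(G)$, and showing the coefficient vanishes unless $M\le K$), treating the character condition by a separate poset argument. Your approach instead isolates the left-invariant data of $(U,\upsilon)$: you peel off $\Ind_{p_1(U)}^G$ to dispose of the case $p_1(U)\neq G$ via Corollary~\ref{elementarytimesphi}(a), and then --- the key step --- you establish the absorption identity $e_{(K_U,\kappa_U)}^G\cdot\bigl[\tfrac{G\times H}{U,\upsilon}\bigr] = \bigl[\tfrac{G\times H}{U,\upsilon}\bigr]$ by a short Mackey check (which is correct: the right invariant of $e_{(K_U,\kappa_U)}^G$ and the left invariant of $(U,\upsilon)$ both equal $\kappa_U$, and $\Delta_{K_U}(G)*U = U$ using normality of $K_U = k_1(U)$ in $p_1(U) = G$). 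This reduces the theorem entirely to showing $\varphi_{K,\kappa}^G\cdot e_{(K_U,\kappa_U)}^G = 0$ under the two failure conditions, which you settle by reusing Proposition~\ref{eKtimeseL} termwise (character case) and by funnelling through $\widetilde{e^G_{(\tilde K,\tilde\kappa)}}$ with $\tilde K = K_U\cap\Phi(G)$ and invoking Equation~(\ref{Ytimesphi}) of Theorem~\ref{phiortogonal} (subgroup case). The advantage of your route is that it replaces the paper's explicit M\"obius bookkeeping with the already-established orthogonality machinery of Theorem~\ref{phiortogonal}, at the modest cost of proving the absorption identity; the paper's route is more self-contained but repeats work morally already done in the proof of that theorem. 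Your treatment of part (b) by passing to opposite bisets is also fine and matches the paper's implicit strategy.
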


\begin{proof} We prove part (a).  Part (b) follows similarly. First, we expand the product as
\begin{eqnarray*}
\varphi_{K,\kappa}^G\left[\frac{G \times H}{U, \upsilon}\right]
& = &
\widetilde{e_G^G} \left( \sum_{(K, \kappa) \leq (L, \lambda) \in \calM_{\Phi(G)}^{G}} \mu_{\unlhd G}(K, L) \, e_{(L, \lambda)}^G \right) \left[\frac{G \times H}{U, \upsilon}\right] \\
& = &
\widetilde{e_G^G} \left( \sum_{\substack{(K, \kappa) \leq (L, \lambda) \in \calM_{\Phi(G)}^{G} \\
\lambda|_{L \cap p_1(U)} = \upsilon_1|_{L \cap p_1(U)}}}
\mu_{\unlhd G}(K, L) \left[\frac{G \times H}{(L \times 1)U, \lambda \cdot \upsilon}\right] \right),
\end{eqnarray*}
where \((\lambda \cdot \upsilon)((l,1)u) := \lambda(l)\, \upsilon(u)\). Let \(M := k_1(U) \cap \Phi(G)\). We claim that
\[
(L \times 1)U = (LM \times 1)U.
\]

The inclusion \((L \times 1)U \subseteq (LM \times 1)U\) is immediate. For the reverse inclusion, let \(x \in (LM \times 1)U\), so that \(x = (lm, 1)(u_1, u_2)\) for some \(l \in L\), \(m \in M\), and \((u_1, u_2) \in U\). Since \(m \in M = k_1(U) \cap \Phi(G)\), we have \((m, 1) \in U\). Hence,
\[
x = (l, 1)(m, 1)(u_1, u_2) = (l, 1)(mu_1, u_2) \in (L \times 1)U,
\]
which establishes the claimed equality.

Assume now that the above sum is nonzero. Then there exists an element \((T, \tau)\) such that the coefficient of the basis element corresponding to \(((T \times 1)U, \tau \cdot \upsilon)\) in the sum is nonzero. Moreover, \(((T \times 1)U, \tau \cdot \upsilon)\) is of the form \(((LM \times 1)U, \lambda \cdot \upsilon)\) for some \((K, \kappa) \leq (L, \lambda) \in \calM_{\Phi(G)}^{G}\). In particular, the coefficient of the corresponding element
\[
\left[\frac{G \times H}{(T \times 1)U, \tau \cdot \upsilon}\right]
\]
in the sum is given by
\[
\sum_{\substack{(K, \kappa) \leq (L, \lambda) \in \calM_{\Phi(G)}^{G} \\ LM = T}} \mu_{\unlhd G}(K, L).
\]
Note that if \(K \leq L \unlhd \Phi(G)\) and \((LM \times 1)U = (T \times 1)U\), then there exists a unique \(\lambda \in L^*\) such that
\[
((LM \times 1)U, \lambda \cdot \upsilon) = ((T \times 1)U, \tau \cdot \upsilon),
\]
since \((\tau \cdot \upsilon)|_L = \tau|_L = \lambda\). Since we assume that the above coefficient is nonzero, it follows from~\cite[Proposition 4]{R} that \(KM \leq K\), which in turn implies that \(M \leq K\). 

Finally, the condition $\kappa\mid_{k_1(U)\cap K}=\upsilon_1\mid_{k_1(U)\cap K}$ arises from the Mackey formula for the product of fibered bisets. To establish the result, it suffices to show that if
\begin{equation*}
    \kappa\mid_{k_1(U)\cap K} \neq \upsilon_1\mid_{k_1(U)\cap K},
\end{equation*}
then for any pair $(K,\kappa) \leq (L,\lambda) \in \calM_{\Phi(G)}^G$, we have
\begin{equation*}
    \lambda\mid_{k_1(U)\cap L} \neq \upsilon_1\mid_{k_1(U)\cap L}.
\end{equation*}
Consequently, all terms in the sum vanish, implying that the product is zero. To see why this holds, assume there exists a pair $(L,\lambda)$ such that
\begin{equation*}
    \lambda\mid_{k_1(U)\cap L} = \upsilon_1\mid_{k_1(U)\cap L}.
\end{equation*}
By the poset structure, it follows that
\begin{equation*}
    \lambda\mid_{k_1(U)\cap K} = \kappa\mid_{k_1(U)\cap K} \neq \upsilon_1\mid_{k_1(U)\cap K}.
\end{equation*}
However, since $k_1(U) \cap K \leq k_1(U) \cap L$, we must have
\begin{equation*}
    \lambda\mid_{k_1(U)\cap K} = \upsilon_1\mid_{k_1(U)\cap K},
\end{equation*}
which contradicts our assumption. This completes the proof of part (a).
\end{proof}
\begin{remarki}
  The conditions in Theorem \ref{phiortogonal2} imply in particular that \( \widehat{k_1(U)} \cap \Phi(G) \leq \widehat{K} \) and \(\widehat{k_1(U)} \cap K \leq \widehat{K}\), where hats denote the kernels of the respective characters. This will be useful in verifying vanishing or orthogonality conditions later.
\end{remarki}
\begin{prop}
    Let $G$ be a group with order invertible in $k$ and let $(K,\kappa)\in \calM_{\Phi(G)}^G$ and 
    let $(U,\upsilon)\in \calM_{G\times G}$ such that $U$ is a covering subgroup of $G\times G$. Then
        \begin{eqnarray*}
            \varphi_{K,\kappa}^G\cdot Y_{U,\upsilon} = \sum_{\substack{(K,\kappa) \leq (L,\lambda)\in \calM_{\Phi(G)}^G\\ \lambda\mid_{L\cap k_1(U)} = \upsilon_1\mid_{L\cap k_1(U)} }} \mu_{\unlhd G}(K,L)Y_{(L\times 1)U,\lambda\cdot \upsilon}.
        \end{eqnarray*}
        This product is non-zero if and only if $k_1(U)\cap \Phi(G) =K$ and $\kappa\mid_{K\cap k_1(U)} = \upsilon_1\mid_{K\cap k_1(U)}$. Similarly for the left multiplication, we have
    \begin{equation*}
             Y_{U,\upsilon}\cdot \varphi_{K,\kappa}^G = \sum_{\substack{(K,\kappa) \leq (L,\lambda)\in \calM_{\Phi(G)}^G\\ \lambda\mid_{L\cap k_2(U)} = \upsilon_2\mid_{L\cap k_2(U)} }} \mu_{\unlhd G}(K,L)Y_{(1\times L)U,\lambda\cdot \upsilon},
       \end{equation*}
        and $Y_{U,\upsilon}\cdot \varphi_{K,\kappa}^G \not = 0$ if and only if $k_2(U)\cap \Phi(G) = K$ and $\upsilon_2\mid_{K\cap k_2(U)} = \kappa\mid_{K\cap k_2(U)}$.
\end{prop}
\begin{proof} 
By definition of \(Y_{U,\upsilon}\), the product \(\varphi_{K,\kappa}^G \cdot Y_{U,\upsilon}\) is equal to
\[
\varphi_{K,\kappa}^G \cdot \widetilde{e_G^G}\Bigl[\frac{G\times G}{U,\upsilon}\Bigr] \widetilde{e_G^G}.
\]
Following Proposition~\ref{phikappaiso}(a), the idempotent \(\widetilde{e_G^G}\) on the left is absorbed by \(\varphi_{K,\kappa}^G\), and hence this expression reduces to
\[
\varphi_{K,\kappa}^G \cdot \Bigl[\frac{G\times G}{U,\upsilon}\Bigr] \widetilde{e_G^G}.
\]
Finally, using the computations from the proof of Theorem~\ref{phiortogonal2}, we obtain the first statement of the theorem. The product being nonzero implies the given conditions by Theorem~\ref{phiortogonal2}. To demonstrate that they are sufficient for the product to be nonzero, we examine the coefficient of \( Y_{U,\upsilon} \) in the sum. Observe that \( Y_{U,\upsilon} = Y_{(L \times 1)U,\, \lambda \cdot \upsilon} \) if and only if \( (U, \upsilon) \) is conjugate to \( ((L \times 1)U,\, \lambda \cdot \upsilon) \). This implies that \( L \) is contained in the subgroup chain
    \[
        k_1((L \times \1)U) \leq k_1(U),
    \]
    leading to the relation
    \[
        K \leq L \leq k_1(U) \cap \Phi(G) = K,
    \]
    which forces \( L = K \). Consequently, the coefficient of \( Y_{U,\upsilon} \) in \( \varphi_{K,\kappa} Y_{U,\upsilon} \) is equal to $\1$, confirming that \( \varphi_{K,\kappa} \cdot Y_{U,\upsilon} \neq 0 \). 
    
    As Proposition~\ref{phikappaiso} (a) and Theorem~\ref{phiortogonal2} are valid for the right multiplication the rest can be established by a similar argument.
\end{proof}
\begin{coro}
    The elements $\varphi_{K,\kappa}^G Y_{U,\upsilon}$ 
    as $(U,\upsilon)$ ranges over a set of representatives of the conjugacy classes of elements in $\mathcal{M}_{G \times G}$ with $U$ covering, which satisfies the conditions
\[
k_1(U) \cap \Phi(G) = K \quad \text{and} \quad \kappa|_{K \cap k_1(U)} = \upsilon_1|_{K \cap k_1(U)}
\]
form a $k$-basis of the right ideal $\varphi_{K,\kappa}^G \mathcal{E}(G)$ 
of $\mathcal{E}(G)$. Similar basis exists for the corresponding left ideal.
\end{coro}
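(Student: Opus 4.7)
The plan is to establish spanning and linear independence separately, both reading off the explicit expansion from the preceding proposition. For spanning, I would begin with the fact, recorded in \ref{basis}, that $\{Y_{U,\upsilon}\}$ — with $(U,\upsilon)$ ranging over $G\times G$-conjugacy class representatives of covering pairs — is a $k$-basis of $\mathcal E(G)$. Any element of $\varphi_{K,\kappa}^G\mathcal E(G)$ is therefore a $k$-linear combination of the products $\varphi_{K,\kappa}^G Y_{U,\upsilon}$, and the previous proposition shows such a product vanishes unless $k_1(U)\cap\Phi(G)=K$ and $\kappa|_{K\cap k_1(U)}=\upsilon_1|_{K\cap k_1(U)}$. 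Hence the listed elements already span the right ideal.

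For linear independence I would analyze the formula
\[
\varphi_{K,\kappa}^G\, Y_{U,\upsilon} \;=\; \sum_{\substack{(K,\kappa)\leq (L,\lambda)\in\calM_{\Phi(G)}^G \\ \lambda|_{L\cap k_1(U)}=\upsilon_1|_{L\cap k_1(U)}}} \mu_{\unlhd G}(K,L)\, Y_{(L\times 1)U,\,\lambda\cdot\upsilon}.
\]
For $(U,\upsilon)$ satisfying the corollary's conditions, the summand with $L=K$ contributes $\mu_{\unlhd G}(K,K)=1$ times $Y_{U,\upsilon}$ itself, because $K\leq k_1(U)$ forces $(K\times 1)U=U$ and the character compatibility forces $\kappa\cdot\upsilon=\upsilon$ on $U$. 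Every summand with $L>K$ produces a basis element $Y_{(L\times 1)U,\,\lambda\cdot\upsilon}$ whose first kernel contains $L$, and so intersects $\Phi(G)$ in a subgroup strictly larger than $K$.

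The decisive step is then a unitriangularity observation: filter the basis $\{Y_{U,\upsilon}\}$ of $\mathcal E(G)$ by the order of $k_1(U)\cap\Phi(G)$ (well defined on conjugacy classes since $\Phi(G)$ is normal). Each $\varphi_{K,\kappa}^G Y_{U,\upsilon}$ in our list has leading term $Y_{U,\upsilon}$ with coefficient $1$ at the layer $k_1\cap\Phi(G)=K$, while no other $\varphi_{K,\kappa}^G Y_{U',\upsilon'}$ from the list contributes to that layer, since all its non-leading terms sit at strictly higher layers. A vanishing $k$-linear combination therefore forces each coefficient to agree with the coefficient of its own $Y_{U,\upsilon}$ in the expansion, which is zero. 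The left-ideal statement is proved by the symmetric argument, using the analogous product formula for $Y_{U,\upsilon}\cdot\varphi_{K,\kappa}^G$ with $k_1$ replaced by $k_2$. The only point I would check with care is that different representatives $(U,\upsilon)$ cannot produce coincidental cancellations at the leading layer; this is exactly the content of the argument from the proof of the preceding proposition showing that $Y_{(L\times 1)U,\,\lambda\cdot\upsilon}=Y_{U,\upsilon}$ forces $L=K$, and no new ideas are required.
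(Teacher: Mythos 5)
Your argument is correct and takes essentially the same route as the paper: the paper's one-line proof observes that the coefficient of $Y_{U,\upsilon}$ in $\varphi_{K,\kappa}^G Y_{U,\upsilon}$ is $1$ and refers back to the basis description of $\mathcal{E}(G)$, which implicitly relies on exactly the triangular-with-respect-to-$|k_1(U)\cap\Phi(G)|$ structure you spell out. The only remark worth adding is that, by the orthogonal-idempotent decomposition $\widetilde{e_G^G}=\sum\varphi_{L,\lambda}^G$ together with the vanishing criterion from the preceding proposition, one actually has the identity $\varphi_{K,\kappa}^G Y_{U,\upsilon}=Y_{U,\upsilon}$ whenever $(U,\upsilon)$ satisfies the stated conditions (the higher-$L$ terms sum to zero), so the listed elements are literally a subset of the canonical basis of $\mathcal{E}(G)$ and no unitriangularity argument is needed at all; your approach is nevertheless correct as written.
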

\begin{proof}
   Since the coefficient of \( Y_{U,\upsilon} \) in the product $\varphi_{K,\kappa}^G Y_{U,\upsilon}$ is equal to ${\bf 1}$  when \( k_1(U) \cap \Phi(G) = K \) and \( \kappa\big|_{K \cap k_1(U)} = \upsilon_1\big|_{K \cap k_1(U)} \), the set of elements \( \varphi_{K,\kappa}^G Y_{U,\upsilon} \) forms a \( k \)-basis of \( \varphi_{K,\kappa}^G \mathcal{E}(G) \), see Section~\ref{basis}. 
\end{proof}

\section{Idempotents of double $\CC^{\times}$-fibered Burnside algebras}
Throughout this section let $G$ be a finite group and let $k$ be  a commutative ring in which $|G|$ is invertible. In this section, we define a set of orthogonal idempotent elements in the $\CC^{\times}$-fibered Burnside algebra $B_k^{\CC^{\times}}(G,G)$ which sum to the identity element. Consequently, we demonstrate that the evaluation of a $\CC^{\times}$-fibered biset functor at $G$ decomposes as a direct sum of $k$-modules, indexed by the minimal sections $(T, S)$ of $G$ and by specific elements of the poset $\calM_{\Phi(T/S)}^{T/S}$. Here, a \emph{minimal} section $(T,S)$ of $G$ is a section with $S \unlhd \Phi(T)$.

\begin{defi} 
Let $(T,S)$ be a section of $G$ 
and let $(K,\kappa) \in \calM_{T/S}^{*}$. Define:
\begin{eqnarray*}
u_{T,S,K,\kappa}^{G} = \Indinf^G_{T/S} \varphi_{K,\kappa}^{T/S}\in B^{\CC^{\times}}_k(G,T/S),\\
v_{T,S,K,\kappa}^{G} = \varphi_{K,\kappa}^{T/S} \Defres^G_{T/S}\in B^{\CC^{\times}}_k(T/S,G).
\end{eqnarray*}
Note that 
\[
u_{T,S,K,\kappa}^{G} = (v_{T,S,K,\kappa}^{G})^{op}.
\]
These elements will be used as partial idempotents in 
$B_k^\ctimes(G,G)$, projecting onto fibered components associated to certain triples $(T/S,K,\kappa)$.
\end{defi}
\begin{nothing}
The group  \( \Aut(G) \) acts on 
\(\calM_{\Phi(G)}^{G} \) via
\[
\psi \cdot (K, \kappa) = (\psi(K), \kappa \circ \psi^{-1}),
\]
where \( \psi \in \Aut(G), (K, \kappa) \in \calM_{\Phi(G)}^{G} \). Since $\Inn(G)$ acts trivially on the elements of $\calM_{\Phi(G)}^G$, the above action descents to an action of $\Out(G)$. Let 
\([K,\kappa]_{\Out(G)}\) denote the corresponding orbit. Usually we omit the index $\Out(G)$ and simply write $[K, \kappa] := [K, \kappa]_{\Out(G)}$ whenever it is clear from the context. 
We denote a complete set of orbit representatives of $\calM_{\Phi(G)}^G$ by $[\calM_{\Phi(G)}^G]$. Similarly we have the subset $[\calM_G^*]$ of representatives for pairs with faithful characters.
\end{nothing}

\begin{nothing}
For $(K,\kappa)\in [\calM_{\Phi(G)}^G]$, set
\[
\varphi_{[K,\kappa]}^G := \sum_{(K',\kappa')\in [K,\kappa]} \varphi_{K',\kappa'}^G
\]
and similarly for a minimal section $(T, S)$ and  $(K,\kappa)\in [\calM_{\Phi(T/S)}^{T/S}]$, set
\[
u_{T,S,[K,\kappa]}^G := \sum_{(K',\kappa')\in [K,\kappa]} u_{T,S,K'\kappa'}^G
\quad \text{and}\quad
v_{T,S,[K,\kappa]}^G := \sum_{(K',\kappa')\in [K,\kappa]} v_{T,S,K'\kappa'}^G.
\]
The following theorem and its proof are the adaptations of \cite[Theorem 4.5]{double} and its proof to the fibered case.
\end{nothing}

\begin{theorem}
\label{uvorthogonal}
Let $(T, S), (T', S')$ be minimal sections of $G$. Also let $(K,\kappa)\in [\calM_{T/S}^{*}]$ and  $(K',\kappa')\in [\calM_{T'/S'}^{*}]$. Then
\begin{enumerate}[a)]
\item $v_{T',S',[K',\kappa']}^G \cdot u_{T,S,[K,\kappa]}^G =0$
unless $(T,S) =_G (T',S')$ and $[K,\kappa] = [K',\kappa']$.
\item When $(T, S) = (T', S')$ then
\begin{eqnarray*}
v_{T,S,[K,\kappa]}^{G}\cdot u_{T,S,[K,\kappa]}^{G} 
&=& \varphi_{[K,\kappa]}^{T/S} \left( \sum_{g \in N_G(T, S)/T} \text{Iso}(c_g) \right)\\
&=& \left( \sum_{g \in N_G(T, S)/T} \text{Iso}(c_g) \right) \varphi_{[K,\kappa]}^{T/S}.
\end{eqnarray*}
Here $N_G(T, S) = N_G(T) \cap N_G(S)$, and $c_g$ is the conjugation by $g$ on $T/S$.
\end{enumerate}
\end{theorem}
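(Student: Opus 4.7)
The plan is to expand
$v_{T',S',K',\kappa'}^G\cdot u_{T,S,K,\kappa}^G=\varphi_{K',\kappa'}^{T'/S'}\Def_{T'/S'}^{T'}\Res_{T'}^G\Ind_T^G\Inf_{T/S}^T\varphi_{K,\kappa}^{T/S}$
for representatives inside the orbits $[K,\kappa]$ and $[K',\kappa']$, apply the Mackey formula to $\Res_{T'}^G\Ind_T^G$, and use the vanishing properties of Corollary~\ref{elementarytimesphi} to collapse the resulting double coset sum. The Frattini hypothesis $S\le\Phi(T)$, $S'\le\Phi(T')$ (from minimality of the sections) together with the faithfulness of $\kappa,\kappa'$ will be the engine that forces the sum to vanish except when $g$ conjugates $(T,S)$ onto $(T',S')$.

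First, for each $g\in[T'\backslash G/T]$, I would rewrite $\Res_{T\cap{}^{g^{-1}}T'}^T\Inf_{T/S}^T=\Inf_{H/(H\cap S)}^H\Res_{HS/S}^{T/S}$ with $H=T\cap{}^{g^{-1}}T'$. Corollary~\ref{elementarytimesphi}(a) applied through $\varphi_{K,\kappa}^{T/S}$ annihilates $\Res_{HS/S}^{T/S}$ unless $HS=T$; since $S\le\Phi(T)$, this forces $H=T$, hence $T\le{}^{g^{-1}}T'$. Decomposing the companion $\Def_{T'/S'}^{T'}\Ind_{T'\cap{}^gT}^{T'}$ via~\eqref{basiselementdecomposition} and applying Corollary~\ref{elementarytimesphi}(a) against $\varphi_{K',\kappa'}^{T'/S'}$ similarly yields $T'\le{}^gT$, so ${}^gT=T'$.

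The surviving term then reduces to
$\varphi_{K',\kappa'}^{T'/S'}\Def_{T'/S'}^{T'}\Inf_{T'/{}^gS}^{T'}\Iso(c_g')\varphi_{K,\kappa}^{T/S}$,
where $c_g':T/S\xrightarrow{\sim}T'/{}^gS$. A direct computation via~\eqref{basiselementdecomposition} gives $\Def_{T'/S'}^{T'}\Inf_{T'/{}^gS}^{T'}=\Inf_{T'/(S'\cdot{}^gS)}^{T'/S'}\Def_{T'/(S'\cdot{}^gS)}^{T'/{}^gS}$. Corollary~\ref{elementarytimesphi}(b) applied to $\varphi_{K',\kappa'}^{T'/S'}$ forces $(S'\cdot{}^gS)/S'\cap K'\le\ker\kappa'=1$; since $S',{}^gS\le\Phi(T')$ the quotient sits inside $\Phi(T'/S')$, so this gives ${}^gS\le S'$. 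The symmetric argument (first push $\Iso(c_g')$ across $\varphi_{K,\kappa}^{T/S}$ via Proposition~\ref{phikappaiso}(c), then apply Corollary~\ref{elementarytimesphi}(b) to the remaining deflation) gives $S'\le{}^gS$, hence ${}^gS=S'$. So only $g$'s conjugating $(T,S)$ onto $(T',S')$ contribute, which settles part (a) whenever the sections are not $G$-conjugate.

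Each surviving term then equals $\varphi_{K',\kappa'}^{T'/S'}\Iso(c_g')\varphi_{K,\kappa}^{T/S}$, and Proposition~\ref{phikappaiso}(c) combined with Theorem~\ref{phiortogonal} reduces it to $\varphi_{K',\kappa'}^{T'/S'}\Iso(c_g')$ when $(K',\kappa')=c_g'\cdot(K,\kappa)$ and to $0$ otherwise. For part (a) this yields the orbit-level vanishing when $[K,\kappa]\ne[K',\kappa']$. For part (b), with $(T',S')=(T,S)$, the surviving representatives parametrize $N_G(T,S)/T$, and reindexing $(L,\lambda)\in[K,\kappa]$ by $c_g$ gives the full sum equal to $\varphi_{[K,\kappa]}^{T/S}\sum_{g}\Iso(c_g)$; the equality with the opposite order holds because $[K,\kappa]_{\Out(T/S)}$ is stable under the image of $N_G(T,S)/T\to\Out(T/S)$, so $\varphi_{[K,\kappa]}^{T/S}$ commutes with each $\Iso(c_g)$. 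The main obstacle will be the consistent bookkeeping of how the Frattini hypothesis propagates through the interleaved $\Res/\Ind$ and $\Inf/\Def$ Mackey decompositions --- a single miscommutation would leave spurious surviving terms.
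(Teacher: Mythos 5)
Your proposal is correct and, at the level of strategy, reproduces the paper's argument: apply the Mackey formula to $\Defres_{T'/S'}^G\Indinf_{T/S}^G$, use the Frattini condition $S\le\Phi(T)$, $S'\le\Phi(T')$ to force full projections, and then exploit faithfulness of $\kappa,\kappa'$ to force trivial kernels, so that only $g$ with ${}^g(T,S)=(T',S')$ contribute. The bookkeeping, however, is genuinely different in execution. The paper keeps the product as a single Mackey sum over transitive fibered bisets $\bigl[T'/S'\times T/S\,/\,U_g,1\bigr]$ and then invokes Theorem~\ref{phiortogonal2} directly, reading off the annihilation from $p_1(U_g)$, $k_1(U_g)$, $k_2(U_g)$ in one step. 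You instead commute the elementary morphisms ($\Res$ past $\Inf$, $\Def$ past $\Ind$, then $\Def$ past $\Inf$) explicitly and invoke the more elementary annihilation criteria of Corollary~\ref{elementarytimesphi} separately at each stage. The two routes are computationally equivalent --- Theorem~\ref{phiortogonal2} is, in effect, the consolidation of these step-by-step reductions --- so your approach is slightly longer but arguably more transparent about where each Frattini/faithfulness input is used. All the individual steps you outline are sound: the commutation $\Def_{G/N}^G\Inf_{G/M}^G=\Inf_{G/NM}^{G/N}\Def_{G/NM}^{G/M}$ is the correct identity, the use of Proposition~\ref{phikappaiso}(c) to transport $\varphi$ across $\Iso(c_g')$ followed by orthogonality (Theorem~\ref{phiortogonal}) gives exactly the $[K,\kappa]$-matching condition, and the commutativity in part (b) follows from $\Aut(T/S)$-stability of the orbit $[K,\kappa]$ exactly as you say. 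No gap.
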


\begin{proof} (a) Suppose that $v_{T',S',[K',\kappa']}^G u_{T,S,[K,\kappa]}^G \neq 0$. Then there exist a pair $(K_2,\kappa_2) \in [K,\kappa]$ and $(K_1,\kappa_1) \in [K',\kappa']$ such that \(v_{T',S',K_1,\kappa_1}^G u_{T,S,K_2,\kappa_2}^G\not = 0\). By definition and by the Mackey formula we have the expression
\begin{align*}
v_{T',S',K_1,\kappa_1}^G u_{T,S,K_2,\kappa_2}^G &= \varphi_{K_1,\kappa_1}^{T'/S'} \Defres^G_{T'/S'} \Indinf^G_{T/S} \varphi_{K_2,\kappa_2}^{T/S} \\
&= \varphi_{K_1,\kappa_1}^{T'/S'} \left( \sum_{g\in T'\setminus G /T} \Big[\frac{T'/S'\times T/S}{U_g,1}\Big]\right) \varphi_{K_2,\kappa_2}^{T/S},
\end{align*}
where $U_g = \{(t'S',tS) \in (T'/S') \times (T/S) \mid t'gt^{-1} \in S'gS\}$. In particular, the projections satisfy $p_1(U_g) = ({}^gT \cap T')S'/S'$ and $p_2(U_g) = (T'^g \cap T)S/S$. Notably, $p_1(U_g) = T'/S'$ if and only if ${}^gT \cap T' = T'$, as $S' \leq \Phi(T')$. Similarly, $p_2(U_g) = T/S$ if and only if $T'^g \cap T = T$. By Theorem~\ref{phiortogonal2}, it follows that
\[
\varphi_{K_1,\kappa_1}^{T'/S'} \left( \sum_{g\in T'\setminus G /T} \Big[\frac{T'/S'\times T/S}{U_g,1}\Big]\right) \varphi_{K_2,\kappa_2}^{T/S} = 0,
\]
unless $T' = {}^gT$. Now assume that $T' = {}^gT$, for some $g\in G$. Then we have $k_1(U_g) = ({}^gS \cap T')S'/S' = {}^gSS'/S'$ and $k_2(U_g) = (S'^g \cap T)S/S = S'^gS/S$. Moreover, since ${}^gS \leq {}^g\Phi(T) = \Phi(T')$ and $S'^g \leq \Phi(T'^g) = \Phi(T)$, it follows that ${}^gSS'/S'$ is contained in $k_1(U_g) \cap (\Phi(T')/S') = k_1(U_g) \cap \Phi(T'/S')$. Similarly, $S'^gS/S$ is contained in $k_2(U_g) \cap (\Phi(T)/S) = k_2(U_g) \cap \Phi(T/S)$. By Theorem \ref{phiortogonal2}, the product
\[
\varphi_{K_1,\kappa_1}^{T'/S'} \left( \sum_{g\in T'\setminus G /T} \Big[\frac{(T'/S')\times (T/S)}{U_g,1}\Big]\right) \varphi_{K_2,\kappa_2}^{T/S}
\]
is equal to zero unless $k_1(U_g) \cap \Phi(T'/S') \leq K_1$, $\kappa_1\mid_{k_1(U_g)\cap K_1}=
1$
and $k_2(U_g) \cap \Phi(T/S) \leq K_2$ and $\kappa_2\mid_{k_2(U_g)\cap K_2}= 1$. But as the $\kappa_1$ and $\kappa_2$ are faithful characters we must have $k_1(U_g) \cap \Phi(T'/S')=k_2(U_g)\cap \Phi(T/S)=
1$. This implies that $S'^g \leq S \leq S'^g$, proving that the pairs $(T, S)$ and $(T', S')$ must be conjugate. Assuming that $(T,S)$ and $(T',S')$ are conjugate, the above product simplifies to
\[
\sum_{g\in N_G(T,S)/T} \left( \varphi_{K_1,\kappa_1}^{T/S} \Big[\frac{T/S\times T/S}{U_g,1}\Big] \varphi_{K_2,\kappa_2}^{T/S} \right) = \sum_{g\in N_G(T,S)/T} \left( \varphi_{K_1,\kappa_1}^{T/S} \Iso(c_g) \varphi_{K_2,\kappa_2}^{T/S} \right).
\]
By Proposition \ref{phikappaiso}, part (c), it follows that
\begin{align*}
\sum_{g\in N_G(T,S)/T} \left( \varphi_{K_1,\kappa_1}^{T/S} \Iso(c_g) \varphi_{K_2,\kappa_2}^{T/S} \right) &= \sum_{g\in N_G(T,S)/T} \left(\Iso(c_g) \varphi_{K_1^g,\kappa_1\circ c_{g^{-1}}}^{T/S} \varphi_{K_2,\kappa_2}^{T/S}\right)= 0,
\end{align*}
unless $(K_2,\kappa_2) = (K_1^g,\kappa_1\circ c_{g^{-1}})$ due to the orthogonality of these elements. Consequently, the proof is complete as it follows that $[K,\kappa] = [K_2,\kappa_2] = [K_1^g,\kappa_1\circ c_{g^{-1}}] = [K_1,\kappa_1]=[K',\kappa']$.

\noindent (b) From the calculations in the first part we can expand the product as
{\allowdisplaybreaks
\begin{eqnarray*}
& &v_{T,S,[K,\kappa]}^{G}\cdot u_{T,S,[K,\kappa]}^{G}=\\
&=& \left ( \sum_{(K',\kappa')\in [K,\kappa]} v_{T,S,K'\kappa'}^G \right ) \left( \sum_{(K'',\kappa'')\in [K,\kappa]} u_{T,S,K''\kappa''}^G\right)\\
&=&
\sum_{(K',\kappa')\in [K,\kappa]} \left( \sum_{(K'',\kappa'')\in [K,\kappa]} v_{T,S,K'\kappa'}^G \cdot u_{T,S,K''\kappa''}^G\right) \\ 
&=&
\sum_{\substack{(K',\kappa')\in [K,\kappa]\\ (K'',\kappa'')\in [K,\kappa]}} \left( \varphi_{K',\kappa'}^{T/S} \left( \sum_{g \in N_G(T, S)/T} \text{Iso}(c_g) \right)\varphi_{K'',\kappa''}^{T/S}\right)\\ 
&=&
\left( \sum_{(K',\kappa')\in [K,\kappa]} \varphi_{K',\kappa'}^{T/S} \right) \left( \sum_{g \in N_G(T, S)/T} \text{Iso}(c_g) \right) \left (\sum_{(K',\kappa')\in [K,\kappa]} \varphi_{K'',\kappa''}^{T/S}\right)\\ 
&=&
\left( \sum_{g \in N_G(T, S)/T} \text{Iso}(c_g) \right) \left( \sum_{(K',\kappa')\in [K,\kappa]} \varphi_{K',\kappa'}^{T/S} \right) \left (\sum_{(K',\kappa')\in [K,\kappa]} \varphi_{K'',\kappa''}^{T/S}\right)\\ 
&=&
\left( \sum_{g \in N_G(T, S)/T} \text{Iso}(c_g) \right) \left( \sum_{(K',\kappa')\in [K,\kappa]} \varphi_{K',\kappa'}^{T/S} \right) \\ &=& \left( \sum_{(K',\kappa')\in [K,\kappa]} \varphi_{K',\kappa'}^{T/S} \right)\left( \sum_{g \in N_G(T, S)/T} \text{Iso}(c_g) \right).
\end{eqnarray*}}
\end{proof}

\begin{defi}\label{defn:etskk}
Let $(T,S)$ be a minimal section of $G$ and $(K,\kappa)\in \calM_{T/S}^{*}$. Define
\begin{align*}
\epsilon_{T,S,[K,\kappa]}^G := \frac{1}{|N_G(T,S):T|}\cdot u_{T,S,[K,\kappa]}^G\cdot v_{T,S,[K,\kappa]}^G \in B_k^{\CC^{\times}}(G, G).
\end{align*}
These idempotents are orthogonal and sum to the identity, as we shall prove below. First expand as follows.
\begin{eqnarray*}
&&\epsilon_{T, S, [K, \kappa]}^G=\\
&=& \frac{1}{|N_G(T, S) : T|}
\cdot \left( \sum_{(K', \kappa')} \Indinf^G_{T/S} \varphi_{K', \kappa'}^{T/S} \right)
\left( \sum_{(K'', \kappa'')} \varphi_{K'', \kappa''}^{T/S} \Defres^G_{T/S} \right) \\
&=& \frac{1}{|N_G(T, S) : T|}
\cdot \sum_{\substack{(K', \kappa'), (K'', \kappa'') \in [K, \kappa]}}
\Indinf^G_{T/S} \varphi_{K', \kappa'}^{T/S}\cdot \varphi_{K'', \kappa''}^{T/S} \Defres^G_{T/S} \\
&=& \frac{1}{|N_G(T, S) : T|}
\cdot \sum_{(K', \kappa') \in [K, \kappa]}
\Indinf^G_{T/S} \varphi_{K', \kappa'}^{T/S} \Defres^G_{T/S} \\
&=& \frac{1}{|N_G(T, S) : T|}
\cdot \Indinf^G_{T/S} \varphi_{[K, \kappa]}^{T/S} \Defres^G_{T/S}.
\end{eqnarray*}
\end{defi}
Once more these idempotents are fibered versions of Bouc's idempotents $\epsilon_{T, S}^G$ given in \cite[Notation 4.6]{double}. The fibered version is obtained in the same way using $\varphi_{[K, \kappa]}^{T/S}$ instead of $\varphi_{\bf 1}^{T/S}$. 
\begin{theorem}
The set
\[
\{ \epsilon_{T,S,[K,\kappa]}^G \big| (T,S) \in [\Pi(G)], (K,\kappa) \in [\calM_{T/S}^* ]\}
\]
 is a system of orthogonal idempotents in $B_k^{\CC^{\times}}(G,G)$ summing up to the identity.
\end{theorem}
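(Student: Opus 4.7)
The plan is to deduce orthogonality and idempotency directly from Theorem \ref{uvorthogonal}, and to establish completeness by refining the identity $1=\sum_{[H]_G}\widetilde{e_H^G}$ in $B_k^\ctimes(G,G)$ using Equation (\ref{Ydeltaassumphi}) and Proposition \ref{infphidef}.

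For orthogonality, I expand $\epsilon_{T,S,[K,\kappa]}^G\cdot\epsilon_{T',S',[K',\kappa']}^G=\tfrac{1}{nn'}\,u\,(vu')\,v'$ with $n,n'$ the corresponding normalizer indices; Theorem \ref{uvorthogonal}(a) forces the middle factor $vu'$ to vanish unless $(T,S)=_G(T',S')$ and $[K,\kappa]=[K',\kappa']$. For idempotency, set $\varphi:=\varphi_{[K,\kappa]}^{T/S}$, $\theta:=\sum_{g\in N_G(T,S)/T}\Iso(c_g)$, and $n:=|N_G(T,S):T|$. Theorem \ref{uvorthogonal}(b) supplies $vu=\varphi\theta=\theta\varphi$, while a direct biset computation yields $\Indinf_{T/S}^G\cdot\Iso(c_g)=\Indinf_{T/S}^G$ for every $g\in N_G(T,S)$, so $u\theta=n\,u$. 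Combining with $u\varphi=u$ and $\varphi v=v$, both consequences of $\varphi$ being idempotent, gives $\epsilon^2=\tfrac{1}{n^2}u(vu)v=\tfrac{1}{n^2}u\theta v=\tfrac{1}{n}uv=\epsilon$.

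For completeness, I begin with $1=\sum_{[H]_G}\widetilde{e_H^G}$ and write $\widetilde{e_H^G}=\tfrac{1}{|N_G(H):H|}\Ind_H^G\widetilde{e_H^H}\Res_H^G$, an identity immediate from comparing coefficients of the basis elements $[(G\times G)/(\Delta(K),1)]$ on the two sides. Substituting $\widetilde{e_H^H}=\sum_{(K,\kappa)\in\calM_{\Phi(H)}^H}\varphi_{K,\kappa}^H$ (Equation (\ref{Ydeltaassumphi}) with $(K,\kappa)=(1,1)$) and applying Proposition \ref{infphidef} to each summand to obtain $\varphi_{K,\kappa}^H=\Inf_{H/\hat K}^H\varphi_{K/\hat K,\bar\kappa}^{H/\hat K}\Def_{H/\hat K}^H$ with $(K/\hat K,\bar\kappa)\in\calM_{H/\hat K}^*$, I observe that $(T,S):=(H,\hat K)$ is a minimal section because $\hat K\leq\Phi(H)$. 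Collapsing $\Ind\circ\Inf$ into $\Indinf_{T/S}^G$ and $\Def\circ\Res$ into $\Defres_{T/S}^G$, the identity becomes
\[
1=\sum_{[H]_G}\tfrac{1}{|N_G(H):H|}\sum_{\substack{S\unlhd H\\S\leq\Phi(H)}}\sum_{(K',\kappa')\in\calM_{T/S}^*}\Indinf_{T/S}^G\,\varphi_{K',\kappa'}^{T/S}\,\Defres_{T/S}^G.
\]
The reindexing identity $\sum_{[H]_G}\sum_S=\sum_{[(T,S)]_G}|N_G(T):N_G(T,S)|$ together with $|N_G(T):N_G(T,S)|/|N_G(T):T|=1/|N_G(T,S):T|$ collapses the outer double sum to a single sum over $[(T,S)]_G$ with coefficient $1/|N_G(T,S):T|$; grouping the inner sum into $\Out(T/S)$-orbits via $\sum_{(K',\kappa')}\varphi_{K',\kappa'}^{T/S}=\sum_{[K',\kappa']}\varphi_{[K',\kappa']}^{T/S}$ finally produces $\sum_{[(T,S)]_G}\sum_{[K',\kappa']}\epsilon_{T,S,[K',\kappa']}^G=1$.

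The main obstacle is the combinatorial bookkeeping in the completeness argument: tracking the normalizer ratios through the reindexing from $[H]_G$ to $[(T,S)]_G$ and verifying that the regrouping over $\Out(T/S)$-orbits precisely reproduces the definition of $\epsilon_{T,S,[K,\kappa]}^G$. The biset absorption identity invoked in the idempotency step is conceptually transparent but must be verified by explicit inspection of the composite $(G,T/S)$-biset structure.
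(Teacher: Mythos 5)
Your proposal is correct and follows essentially the same strategy as the paper's own proof, relying on the same three ingredients: Theorem~\ref{uvorthogonal} for orthogonality, Proposition~\ref{infphidef} together with the bijection between $\calM_{\Phi(T)}^{T}$ and triples $(S, K, \kappa)$ for the reindexing, and the identity $\Ind_T^G\widetilde{e_T^T}\Res_T^G = |N_G(T):T|\widetilde{e_T^G}$ for the endgame. Two small points of divergence are worth noting. First, you run the completeness argument in the opposite direction: the paper starts from $\sum\epsilon^G_{T,S,[K,\kappa]}$ and simplifies it down to $\sum_{T\le_G G}\widetilde{e_T^G} = 1$, while you start from $1 = \sum_{[H]_G}\widetilde{e_H^G}$ and unfold it into $\sum\epsilon$; the two are logically equivalent and involve identical combinatorics. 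Second, you prove idempotency directly via the absorption identity $\Indinf_{T/S}^G\cdot\Iso(c_g) = \Indinf_{T/S}^G$ for $g\in N_G(T,S)$ (which is correct: $\Delta_{c_g}(T)$ is $(g,1)$-conjugate to $\Delta(T)$ in $G\times T$, so $\Ind_T^G\Iso(c_g^T) = \Ind_T^G$, and this commutes with inflation), whereas the paper implicitly deduces it from orthogonality plus completeness by multiplying $1=\sum\epsilon_j$ by $\epsilon_i$. Your extra argument is a legitimate alternative but is not strictly needed once completeness is established. All the normalizer bookkeeping in the reindexing $\sum_{[H]_G}\sum_S = \sum_{[(T,S)]_G}|N_G(T):N_G(T,S)|$ and the ratio $|N_G(T):N_G(T,S)|/|N_G(T):T| = 1/|N_G(T,S):T|$ is correctly carried out and matches the paper.
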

\begin{proof}
The orthogonality of the elements $\epsilon_{T,S,[K,\kappa]}^G$ follow simply by Theorem \ref{uvorthogonal}. Moreover,
\begin{eqnarray*}
&& \sum_{\substack{(T,S)\in [\Pi(G)]\\(K,\kappa)\in [\calM_{T/S}^{*}]}}\epsilon_{T,S,[K,\kappa]}^G=\\
&=&\sum_{\substack{(T,S)\in [\Pi(G)] \\(K,\kappa)\in [\calM_{T/S}^{*}]}} \frac{1}{|N_G(T,S):T|} \Indinf_{T/S}^G \varphi_{[K,\kappa]}^{T/S}\Defres_{T/S}^G\\
&=&\sum_{(T,S)\in \Pi(G)} \left(\sum_{\substack{(K,\kappa)\in \calM_{T/S}^{*}}} \frac{1}{|G:T|} \Indinf_{T/S}^G \varphi_{K,\kappa}^{T/S}\Defres_{T/S}^G\right).
\end{eqnarray*}
Fix $T\leq G$ and sum over $S$ to obtain
\begin{eqnarray*}
&&\sum_{\substack{S\unlhd T, S\leq \Phi(T)\\(K,\kappa)\in \calM_{T/S}^{*}}} \Indinf_{T/S}^G \varphi_{K,\kappa}^{T/S}\Defres_{T/S}^G\\
&=&\Ind_T^G\left(\sum_{\substack{S\unlhd T, S\leq \Phi(T)\\(K,\kappa)\in \calM_{T/S}^{*}}} \Inf_{T/S}^T \varphi_{K,\kappa}^{T/S}\Def_{T/S}^T\right) 
\Res_T^G.
\end{eqnarray*}
Observe that there exists a bijection between the set \(\calM_{\Phi(T)}^{T}\) and the set of triples \((S, K, \kappa)\) satisfying the following properties:
\[
S \unlhd T,\quad S \subseteq \Phi(T), \quad \text{and} \quad (K, \kappa) \in \calM_{T/S}^{*}.
\]
This correspondence is given by mapping each \((L, \lambda) \in \calM_{\Phi(T)}^{T}\) to the triple \((\widehat{L}, L / \widehat{L}, \bar{\lambda})\), where \(\widehat{L} = \ker(\lambda)\), and \(\bar{\lambda}(l\widehat{L}) = \lambda(l)\) for all \(l \in L\). Hence by Proposition \ref{infphidef} and also using Theorem \ref{phiortogonal} and \cite[Lemma 2.13]{double}, the above equation becomes
\begin{align*}
\Ind_T^G \left(\sum_{\substack{(L,\lambda)\in \calM_{\Phi(T)}^{T}}} \varphi_{L,\lambda}^{T}\right)\Res_T^G= \Ind_T^G \widetilde{e_T^T} \Res_T^G = |N_G(T):T|\widetilde{e_T^G}.
\end{align*}
Therefore we get
\begin{align*}
\sum_{\substack{(T,S)\in [\Pi(G)]\\(K,\kappa)\in [\calM_{T/S}^{*}]}}\epsilon_{T,S,[K,\kappa]}^G 
=
\sum_{T\leq G} \frac{|N_G(T):T|}{|G:T|} \widetilde{e_T^G} = \sum_{T\le_G G} \widetilde{e_T^G} = \Big [ \frac{G\times G}{\Delta(G),1} \Big].
\end{align*}
\end{proof}

\begin{nothing}
Having constructed a complete set of mutually orthogonal idempotents in \(\CC^\times\)-fibered Burnside algebra $B_k^{\CC^{\times}}(G,G)$, we are now equipped to describe a powerful structural decomposition of the evaluations of $\CC^{\times}$-fibered biset functors on finite groups. Let us consider such a functor \(F\) defined over a field \(k\), and fix a finite group \(G\) for which \(|G| \in k^\times\).

The fundamental result presented here is a canonical decomposition of the module \(F(G)\), which reveals its internal structure in terms of simpler, well-understood constituents. Specifically, \(F(G)\) can be expressed as a direct sum of components indexed by conjugacy classes of minimal sections \((T,S)\) of \(G\), and by equivalence classes \([K,\kappa]\), where \(K\) is a normal subgroup of the quotient \(T/S\) contained in its Frattini subgroup and \(\kappa\) is a faithful character of \(K\). This decomposition is realized via the isomorphism
\[
F(G) \cong \bigoplus_{(T,S) \in [\Pi(G)]} \bigoplus_{(K,\kappa) \in [\mathcal{M}_{T/S}^*]} \left( \varphi_{[K,\kappa]}^{T/S} F(T/S) \right)^{N_G(T,S)/T}.
\]

This decomposition is made explicit by two mutually inverse maps. The forward map, denoted \(V\), sends an element \(w \in F(G)\) to the tuple of its components in the above direct sum:
\[
V(w) = \left( \frac{1}{|N_G(T,S) : T|} \cdot v_{T,S,[K,\kappa]}^G \cdot w \right)_{(T,S,[K,\kappa])}.
\]
Each of these components belongs to the space \(\left( \varphi_{[K,\kappa]}^{T/S} F(T/S) \right)^{N_G(T,S)/T}\), and we must verify this invariance. To do so, fix \((T,S,[K,\kappa])\) and consider any \(g \in N_G(T,S)\). Since \(g\) normalizes both \(T\) and \(S\), the conjugation map \(c_g\) fixes the section \((T,S)\) and the orbit \([K,\kappa]\). Thus conjugation by \(g\) maps the idempotent \(v_{T,S,[K,\kappa]}^G\) to $\Iso(c_g)(v_{T,S,[K,\kappa]}^G) = v_{T, S, [K,\kappa]}^G \Iso(c_g)$. Moreover, the map \( \Iso(c_g):F(G)\to F(G) \) is an inner automorphism hence the identity map for $g\in G$ and we conclude that \(v_{T,S,[K,\kappa]}^G \cdot w\) is indeed fixed by the action of \(N_G(T,S)/T\).
The inverse map \(U\) reconstructs an element of \(F(G)\) from its projections in the summands:
\[
U\left( (w_{T,S, (K,\kappa)})_{(T,S, (K,\kappa))} \right) = \sum_{(T,S),[K,\kappa]} u_{T,S,[K,\kappa]}^G \cdot w_{T,S,[K,\kappa]}.
\]

We now verify that \(U \circ V = \mathrm{id}_{F(G)}\). For any \(w \in F(G)\), we compute
\[
U(V(w)) = \sum_{(T,S),[K,\kappa]} \frac{1}{|N_G(T,S) : T|} \cdot u_{T,S,[K,\kappa]}^G \cdot v_{T,S,[K,\kappa]}^G \cdot w.
\]
The sum over all \(u \cdot v\) terms recovers the identity map on \(F(G)\), since the elements \(\epsilon_{T,S,[K,\kappa]}^G \) form a complete set of orthogonal idempotents. Hence \(U(V(w)) = w\) for all \(w\). To establish the reverse identity \(V \circ U = \mathrm{id}\), consider a tuple \((w_{T,S, (K,\kappa)})_{(T,S, (K,\kappa))}\) in the direct sum. Then
\begin{eqnarray*}
&&VU\left( (w_{T,S, (K,\kappa)})_{(T,S, (K,\kappa))} \right)\\
&=&\left( \frac{1}{|N_G(T,S):T|} v_{T,S,[K,\kappa]}^G \sum_{(T',S'),[K',\kappa']} u_{T',S',[K',\kappa']}^G w_{T',S',[K',\kappa']} \right)_{(T,S,[K,\kappa])}.
\end{eqnarray*}
Now, since, by Theorem \ref{uvorthogonal}, we have
\[
v_{T,S,[K,\kappa]}^G \cdot u_{T',S',[K',\kappa']}^G = \delta_{(T,S,[K,\kappa]),(T',S',[K',\kappa'])} \cdot \Big(\sum_{g \in N_G(T,S)/T} \Iso(c_g)\Big)\varphi_{[K, \kappa]}^{T/S},
\]
and also $\varphi_{[K, \kappa]}^{T/S}\cdot w_{T,S,[K,\kappa]} = w_{T,S,[K,\kappa]}$ by the choice of the elements $w_{T,S,[K,\kappa]}$, we find that all cross-terms vanish, and what remains is
\[
VU\left( w_{T,S,[K,\kappa]} \right) = \left( \frac{1}{|N_G(T,S):T|} \sum_{g \in N_G(T,S)/T} \Iso(c_g)(w_{T,S,[K,\kappa]}) \right)_{(T,S,[K,\kappa])}.
\]
Because each \(w_{T,S,[K,\kappa]}\) is \(N_G(T,S)/T\)-invariant by construction, this averaging yields the original element. Therefore, \(VU = \mathrm{id}\), completing the verification.

As a consequence, each element \(w \in F(G)\) admits the decomposition
\[
w = \sum_{(T,S),[K,\kappa]} \epsilon_{T,S,[K,\kappa]}^G \cdot w.
\]
\end{nothing}

\begin{theorem}
    Let $k$ be a commutative ring, $F$ be a fibered biset functor over $k$ and $G$ be a finite group with $|G|\in k^\times$. There exists a decomposition 
\[
F(G) \cong \bigoplus_{(T,S) \in [\Pi(G)]} \bigoplus_{(K,\kappa) \in [\mathcal{M}_{T/S}^*]} \left( \varphi_{[K,\kappa]}^{T/S} F(T/S) \right)^{N_G(T,S)/T}.
\]
Here, $[\Pi(G)]$ denotes a set of representatives of the conjugacy classes of minimal sections of $G$, i.e., sections \(S\unlhd T\leq G\) with \(S \leq \Phi(T)\), and $[\mathcal{M}_{T/S}^*]$ denotes a set of orbit representatives $[K,\kappa]$ under the action of $\Out(T/S)$, where $K$ is a normal subgroup of $T/S$ contained in its Frattini subgroup, and $\kappa$ is a faithful character of $K$.

\end{theorem}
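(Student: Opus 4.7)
The plan is to deduce the decomposition directly from the complete system of orthogonal idempotents $\{\epsilon_{T,S,[K,\kappa]}^G\}$ constructed in the preceding theorem. Since these idempotents lie in $B_k^{\CC^\times}(G,G)$ and sum to the identity, applying $F$ yields at once an internal direct sum
\[
F(G) = \bigoplus_{(T,S),\, [K,\kappa]} \epsilon_{T,S,[K,\kappa]}^G \cdot F(G),
\]
so the remaining task is to identify each summand with $\bigl( \varphi_{[K,\kappa]}^{T/S} F(T/S) \bigr)^{N_G(T,S)/T}$.

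For this identification I would exploit the factorization $\epsilon_{T,S,[K,\kappa]}^G = \tfrac{1}{|N_G(T,S):T|}\, u_{T,S,[K,\kappa]}^G\, v_{T,S,[K,\kappa]}^G$ and define mutually inverse maps $V$ and $U$. The map $V$ sends $w \in F(G)$ to the tuple whose $(T,S,[K,\kappa])$-component is $\tfrac{1}{|N_G(T,S):T|}\, v_{T,S,[K,\kappa]}^G \cdot w$, and $U$ sends a tuple $(w_{T,S,[K,\kappa]})$ to $\sum u_{T,S,[K,\kappa]}^G \cdot w_{T,S,[K,\kappa]}$. The first task is to verify that $V$ lands in the claimed subspace: membership in $\varphi_{[K,\kappa]}^{T/S} F(T/S)$ is immediate from $v_{T,S,[K,\kappa]}^G = \varphi_{[K,\kappa]}^{T/S}\, \Defres_{T/S}^G$, while $N_G(T,S)/T$-invariance follows from Proposition~\ref{phikappaiso}(c) combined with the observation that $\Iso(c_g)$ acts as the identity on $F(G)$ for any $g \in G$. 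Specifically, for $g \in N_G(T,S)$ one rewrites $\Iso(c_g)_{T/S}\, v_{T,S,[K,\kappa]}^G = v_{T,S,[K,\kappa]}^G\, \Iso(c_g)_G$, and the right factor is trivialized when applied to $w \in F(G)$.

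Next I would establish the two composition identities. The relation $U \circ V = \mathrm{id}_{F(G)}$ drops out immediately from the completeness relation $\sum \epsilon_{T,S,[K,\kappa]}^G = \mathrm{id}$ proved in the previous theorem. For $V \circ U = \mathrm{id}$, I would invoke Theorem~\ref{uvorthogonal}: the off-diagonal compositions $v_{T,S,[K,\kappa]}^G \cdot u_{T',S',[K',\kappa']}^G$ vanish, leaving only diagonal terms which reduce to $\sum_{g \in N_G(T,S)/T} \Iso(c_g)$ acting on $w_{T,S,[K,\kappa]}$. Since each component is invariant under this action by assumption, the averaging returns $|N_G(T,S):T|$ times the original element, which cancels the normalizing factor in $V$.

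The main subtlety I anticipate is not any single algebraic computation but correctly coordinating the three parallel actions at play: the $G$-conjugation action on minimal sections $(T,S)$, the $\Out(T/S)$-action on pairs $(K,\kappa)$, and the residual $N_G(T,S)/T$-action on $F(T/S)$. Once these are aligned so that $\varphi_{[K,\kappa]}^{T/S}$ transforms covariantly under $\Iso(c_g)$ for $g \in N_G(T,S)$, both the image of $V$ lies in the invariant subspace and the averaging step in $V \circ U$ closes properly; the remainder is formal manipulation of the idempotents and Mackey-type formulas already established in the preceding sections.
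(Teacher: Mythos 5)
Your proposal is correct and follows essentially the same route as the paper: both construct the mutually inverse maps $V$ and $U$ from the factorization $\epsilon_{T,S,[K,\kappa]}^G = \tfrac{1}{|N_G(T,S):T|}\,u_{T,S,[K,\kappa]}^G\,v_{T,S,[K,\kappa]}^G$, verify invariance of the image of $V$ via Proposition~\ref{phikappaiso}(c) and the triviality of inner automorphisms on $F(G)$, obtain $U\circ V=\mathrm{id}$ from the completeness of the idempotents, and obtain $V\circ U=\mathrm{id}$ from the orthogonality relations of Theorem~\ref{uvorthogonal} combined with the assumed $N_G(T,S)/T$-invariance of the components.
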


\section{Decomposing $\calF_{p,k}^{\ctimes}$, when $p\in k^{\times}$}
\label{Decomposing}
Let $k$ be a commutative unitary ring, and let $p \in k^{\times}$ be a fixed prime. In this section, we restrict to attention to the category of $\CC^{\times}$-fibered bisets on finite $p$-groups. The goal of this section is to decompose the category of $\ctimes$-fibered $p$-biset functors via idempotents defined earlier. This is a fibered version of Bouc's results Theorem 7.4 and Corollary 7.5 in \cite{double}. We begin by defining atoric $p$-groups.

\begin{defi}[Bouc]
   A finite \(p\)-group \(P\) is called \emph{atoric} if the following equivalent conditions hold.
   \begin{enumerate}
       \item \(P\) does not admit any decomposition \(P = E\times Q\), where \(E\) is a non-trivial elementary abelian group. \cite[Section 6.1]{double}
       \item For any non-trivial normal subgroup $N$ of \(P\), one has \(N\cap\Phi(P)\not = 1\). \cite[Lemma 6.3 (2)]{double}
       \item \(\Omega_1Z(P)\le \Phi(P)\). \cite[Lemma 6.3 (3)]{double}
   \end{enumerate}
\end{defi}

We denote by $\calA t_p$ the class of \textit{atoric} $p$-groups and by $[\calA t_p]$ a set of representatives of isomorphism classes in $\calA t_p$. 

Let $P$ be a finite $p$-group. As in \cite[Section 6]{double}, we denote the greatest atoric quotient of $P$ by $P^@$. By Proposition 6.4 of \cite{double}, there is a maximal normal subgroup $N$ of $P$ with the property that $N\cap \Phi(P)=1$ such that $P^@ = P/N$. We call $P^@$ the \emph{atoric part} of $P$. For further properties of atoric $p$-groups, we refer to Section~6 of \cite{double}. 

The following theorem refines a fundamental result of Bouc to the fibered setting, where the effect of the subgroup $K$ is made clear in an essential way. As in \cite{double}, this refinement plays a central role in establishing the decomposition theorem we prove below.

\begin{theorem}\label{thm:nonzeroatoric}
Let \( P \) and \( Q \) be finite \( p \)-groups, and let \( (T,S) \) and \( (V,U) \) be minimal sections of \( P \) and \( Q \), respectively. Suppose that 
\[
(K/S,\kappa) \in \calM_{T/S}^{*} \quad \text{and} \quad (L/U,\lambda) \in \calM_{V/U}^{*}.
\]
Then 
\[
\epsilon_{V,U,[L/U,\lambda]}^Q \cdot B_{k}^{\CC^{\times}}(Q,P) \cdot \epsilon_{T,S,[K/S,\kappa]}^P \neq \{0\} \, \text{implies that}\, (V/L)^@ \cong (T/K)^@. 
\]
\end{theorem}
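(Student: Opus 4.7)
The plan is to adapt Bouc's argument from \cite[Section~7]{double} to the fibered setting. Since each $\epsilon$ is a nonzero scalar multiple of $\Indinf \cdot \varphi \cdot \Defres$, non-vanishing of the sandwich reduces to showing that $\varphi_{[L/U,\lambda]}^{V/U} \cdot A \cdot \varphi_{[K/S,\kappa]}^{T/S} \neq 0$ for some $A \in B_k^{\CC^\times}(V/U, T/S)$. Expanding $A$ in the basis of transitive fibered bisets and using the orthogonality of the individual $\varphi_{L',\lambda'}^{V/U}$ over orbit representatives (Theorem~\ref{phiortogonal}), I can further reduce to a single basis element: it suffices to show that whenever $\varphi_{L/U,\lambda}^{V/U} \cdot [(V/U \times T/S)/(W,\omega)] \cdot \varphi_{K/S,\kappa}^{T/S} \neq 0$ for a fixed choice of orbit representatives, then $(V/L)^@ \cong (T/K)^@$ (the ambiguity in orbit choice is harmless since $\Out$ acts by automorphisms).

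Theorem~\ref{phiortogonal2} applied to this product then yields: $p_1(W) = V/U$ and $p_2(W) = T/S$; $k_1(W) \cap \Phi(V/U) \leq L/U$ and $k_2(W) \cap \Phi(T/S) \leq K/S$; and the restrictions of $\omega$ along the corresponding rectangular subgroups match $\lambda$ and $\kappa$. Writing $\widehat{K}$ and $\widehat{L}$ for the kernels of the left- and right-invariant characters of $W$, the faithfulness of $\lambda, \kappa$ together with $L/U, K/S \leq \Phi$ forces $k_1(W) \cap L/U = k_1(W) \cap \Phi(V/U)$ and hence $\widehat{K} \cap \Phi(V/U) = 1$; symmetrically $\widehat{L} \cap \Phi(T/S) = 1$. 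Both $\widehat{K}$ and $\widehat{L}$ are therefore elementary abelian and central, in particular direct factors. Using the canonical decomposition (equation~(\ref{basiselementdecomposition})) $[(V/U \times T/S)/(W,\omega)] = \Inf_{H_1}^{V/U} \cdot X \cdot \Def_{H_2}^{T/S}$ with $H_i$ the corresponding quotients and $X$ carrying faithful characters on its largest rectangular subgroup, Bouc's argument from \cite[Section~6]{double} shows that atoric parts survive the quotient $V/U \to H_1$ (and analogously on the right); a further Frattini computation with the modular law (using $L/U \leq \Phi(V/U)$, $k_1(W) \cap \Phi(V/U) \leq L/U$, and $\widehat{K} \cap \Phi(V/U) = 1$) then yields $(H_1/\overline{L/U})^@ = (V/L)^@$ and symmetrically $(H_2/\overline{K/S})^@ = (T/K)^@$, where $\overline{L/U}$, $\overline{K/S}$ denote the respective images in $H_1$ and $H_2$.

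The main obstacle is the final identification: one must show that the biset $X$ realizes an isomorphism $(H_1/\overline{L/U})^@ \cong (H_2/\overline{K/S})^@$. Its covering property yields an abstract group isomorphism $\phi \colon H_1/K_X \xrightarrow{\sim} H_2/L_X$ where $K_X = k_1(W)/\widehat{K}$, $L_X = k_2(W)/\widehat{L}$ are central cyclic and carry the faithful characters $\kappa_X, \lambda_X$ inherited from $\omega$. The delicate point is to verify that $\phi$ carries the image of $\overline{L/U}$ in $H_1/K_X$ onto the image of $\overline{K/S}$ in $H_2/L_X$; this will require combining the character compatibilities from the previous step with the structure of $H_1$ and $H_2$ as central extensions of their common base linked by the character data of $X$. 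Once this matching is established, a final Frattini computation shows that further quotienting by $K_X$ and $L_X$ preserves $(H_1/\overline{L/U})^@$ and $(H_2/\overline{K/S})^@$ respectively, whence $(V/L)^@ \cong (T/K)^@$.
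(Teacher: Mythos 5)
Your reductions through step~3 —- passing to a single transitive fibered biset and applying Theorem~\ref{phiortogonal2} to extract the constraints on $p_i(W)$, $k_i(W)$, and the character compatibilities -- are correct and coincide with the paper's proof. Your Frattini computations showing $\widehat{K}\cap\Phi(V/U)=1$, $\widehat{L}\cap\Phi(T/S)=1$, and hence (via modularity and \cite[Proposition~6.6]{double}) $(H_1/\overline{L/U})^@ \cong (V/L)^@$ and $(H_2/\overline{K/S})^@ \cong (T/K)^@$, are also sound.

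However, the ``main obstacle'' you identify in your final paragraph is a genuine gap, not a routine verification you can defer. You want the Goursat isomorphism $\phi\colon H_1/K_X \to H_2/L_X$ coming from the covering biset $X$ to carry the image of $\overline{L/U}$ onto the image of $\overline{K/S}$; nothing in your setup forces this, and proving it is exactly where the nontrivial content lies. You have acknowledged the gap without filling it, so the proof is incomplete as written.

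The paper sidesteps this entirely by taking a different final step. Instead of factoring through the canonical decomposition $\Inf\circ X\circ\Def$, observe that the nonvanishing of $\varphi_{L'/U,\lambda'}^{V/U}\cdot[(V/U\times T/S)/(N,\eta)]\cdot\varphi_{K'/S,\kappa'}^{T/S}$ forces the nonvanishing of the coarser product $e_{(L'/U,\lambda')}^{V/U}\cdot[(V/U\times T/S)/(N,\eta)]\cdot e_{(K'/S,\kappa')}^{T/S}$. By the Mackey formula (\cite[Proposition~4.2]{fibered biset}) this is a \emph{single} transitive fibered biset with stabilizing subgroup $N\bigl((L'/U)\times(K'/S)\bigr)$, which is again covering and has $k_1 = k_1(N)(L'/U)$ and $k_2 = k_2(N)(K'/S)$. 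The standard Goursat property of a covering subgroup then gives directly
\[
\frac{V/U}{k_1(N)(L'/U)}\;\cong\;\frac{T/S}{k_2(N)(K'/S)},
\]
which, combined with your (and the paper's) application of \cite[Proposition~6.6]{double}, yields $(V/L)^@\cong(T/K)^@$ without ever having to match $\overline{L/U}$ and $\overline{K/S}$ across the isomorphism induced by $X$. I suggest replacing your final paragraph with this argument; your earlier steps can then be retained or streamlined, since the Frattini computation reducing $(V/U)/k_1(N)(L'/U)$ to $(V/L)$ at the level of atoric parts is all that is really needed from them.
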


\begin{proof}
Suppose that 
\[
\epsilon_{V,U,[L/U,\lambda]}^Q\cdot  B_{k}^{\CC^{\times}}(Q,P)\cdot  \epsilon_{T,S,[K/S,\kappa]}^P \neq \{0\},
\]
then there exists \( a\in B_{k}^{\mathbb{C}}(Q,P) \) such that \(\epsilon_{V,U,[L/U,\lambda]}^Q\cdot  a\cdot \epsilon_{T,S,[K/S,\kappa]}^P \not = 0\) and thus
\[
    \Indinf^Q_{V/U}\varphi_{[L/U,\lambda]}^{V/U}\Defres^Q_{V/U} \cdot a \cdot \Indinf^P_{T/S}\varphi_{[K/S,\kappa]}^{T/S}\Defres^P_{T/S} \neq 0.
\]
Thus, there exist \( (L'/U,\lambda')\in [L/U,\lambda] \) and \( (K'/S,\kappa')\in [K/S,\kappa] \) such that
\[
    \Indinf_{V/U}^Q \varphi_{L'/U,\lambda'}^{V/U}\Defres_{V/U}^Q \cdot a \cdot \Indinf_{T/S}^P\varphi_{K'/S,\kappa'}^{T/S}\Defres_{T/S}^P\neq 0. 
\]
In particular, the element 
\[
b=\Defres_{V/U}^Q \cdot a \cdot \Indinf_{T/S}^P
\] 
of \( B_{k}^{\CC^{\times}}(V/U,T/S) \) satisfies 
\[
\varphi_{L'/U,\lambda'}^{V/U} \cdot b \cdot \varphi_{K'/S,\kappa'}^{T/S} \neq 0.
\] 
This implies that there exists an element \( (N,\eta)\in \calM_{V/U\times T/S} \) such that
\begin{equation*}
    \varphi_{L'/U,\lambda'}^{V/U} \cdot \Big[\frac{(V/U)\times (T/S)}{N,\eta}\Big] \cdot \varphi_{K'/S,\kappa'}^{T/S} \neq 0.
\end{equation*}
Then, by Theorem~\ref{phiortogonal2}, the group $N$ must be covering in $(V/U) \times (T/S)$, and we must have
\[
    k_1(N) \cap \Phi(V/U) \leq L'/U \quad \text{and} \quad k_2(N) \cap \Phi(T/S) \leq K'/S.
\]
Hence, we obtain
\begin{eqnarray*}
    \frac{k_1(N)(L'/U)}{L'/U} \cap \Phi\left(\frac{V/U}{L'/U}\right)
    &=& \frac{k_1(N)(L'/U)}{L'/U} \cap \frac{\Phi(V/U)}{L'/U}=\\
    &=& \frac{k_1(N)(L'/U) \cap \Phi(V/U)}{L'/U}
    = \frac{L'/U}{L'/U},
\end{eqnarray*}
since $L'/U \in \Phi(V/U)$. Similarly, we also have
\[
    \frac{k_2(N)(K'/S)}{K'/S} \cap \Phi\left(\frac{T/S}{K'/S}\right) = \1.
\]
Therefore, by \cite[Proposition~6.6]{double}, it follows that
\[
    (V/L)^@ \cong \left(\frac{V/U}{L/U}\right)^@ \cong \left(\frac{V/U}{L'/U}\right)^@
    \cong \left(\frac{\frac{V/U}{L'/U}}{\frac{k_1(N)(L'/U)}{L'/U}}\right)^@
    \cong \left(\frac{V/U}{k_1(N)(L'/U)}\right)^@.
\]
By analogous reasoning on the right-hand side, we also have
\[
    (T/K)^@ \cong \left(\frac{T/S}{k_2(N)(K'/S)}\right)^@.
\]
For the above product to be nonzero, we must also have
\[
    e_{(L'/U,\lambda')}^{V/U} \cdot \left[\frac{(V/U)\times (T/S)}{N,\eta}\right] \cdot e_{(K'/S,\kappa')}^{T/S} \neq 0.
\]
This follows from the definitions of $\varphi_{L'/U,\lambda'}^{V/U}$, $\varphi_{K'/S,\kappa'}^{T/S}$, and Proposition \ref{eKtimeseL}. Moreover, by \cite[Proposition~4.2]{fibered biset}, we can compute the left and right invariants of this nonzero element. In particular,
\[
    e_{(L'/U,\lambda')}^{V/U} \cdot \left[\frac{(V/U)\times (T/S)}{N,\eta}\right] \cdot e_{(K'/S,\kappa')}^{T/S}
    = \left[\frac{(V/U)\times (T/S)}{N((L'/U)\times (K'/S)),\, \lambda' \cdot \eta \cdot \kappa'}\right],
\]
where
\begin{align*}
    p_1\big(N((L'/U)\times (K'/S))\big) &= V/U, \\
    p_2\big(N((L'/U)\times (K'/S))\big) &= T/S, \\
    k_1\big(N((L'/U)\times (K'/S))\big) &= k_1(N)(L'/U), \\
    k_2\big(N((L'/U)\times (K'/S))\big) &= k_2(N)(K'/S).
\end{align*}
Hence, we obtain the isomorphism
\[
    \frac{V/U}{k_1(N)(L'/U)} \cong \frac{T/S}{k_2(N)(K'/S)}.
\]
Combining this with the previous observations yields the desired isomorphism:
\[
    (V/L)^@ \cong (T/K)^@.
\]
\end{proof}

\begin{nothing}
 One may compare the above theorem with Lemma 6.3 of \cite{fibered biset} where it is proved that if a product 
 \[
 f_{\{K, \kappa\}}\cdot a\cdot f_{\{L,\lambda\}}
 \]
 for $a\in B_k^{\ctimes}(P,P)$ is non-zero then $(G, K,\kappa)$ is linked to $(G, L, \lambda)$. This also implies that $G/K$ is isomorphic to $G/L$. The above result shows that we loosen the condition considerably. 

 The following definition is our version of idempotents $b_L^P$ of \cite{double}. We change the notation to distinguish them properly. 
\end{nothing}

\begin{defi}\label{sec:cmPs}
    Let $M$ be an atoric $p$-group and $P$ be a finite $p$-group. The \emph{Bouc's idempotent associated to $M$ at $P$} is the element  $c_M^P$ of $B_k^{\CC^{\times}}(P,P)$ defined by 
    \[
        c_M^P = \sum_{\substack{(T,S)\in [\Pi(P)]\\ (K/S,\kappa)\in [\calM_{T/S}^{*}]\\ (T/K)^{@} \cong M}} \epsilon_{T,S,[K/S,\kappa]}^P.
    \]
\end{defi}
\begin{nothing}{\bf Properties of Bouc's idempotents.}
    It turns out that Bouc's idempotents $c_M^P$ replaces $b_L^P$ from \cite{double} in the best way, leading analogous results for the category of fibered $p$-biset functors. With all the preliminaries at hand, the proofs of properties of Bouc's idempotents are almost identical to those in \cite[Theorem 7.4]{double}. We shall omit certain details and list the properties as follows. 
    
    Let $M, N$ be atoric $p$-groups and $P, Q$ be finite $p$-groups.
\begin{enumerate}
    \item[(a)] The idempotent $c_M^P$ is non-zero if and only if $M$ is isomorphic to a subquotient of $P^@$. 
\end{enumerate}

The forward implication follows from the definition. For the converse, if $M\cong (T/S)^@$ for a minimal section $(T, S)$ of $P$,  then $\epsilon_{T, S, [S/S, 1]}^P$ is a summand of $c_M^P$.

\begin{enumerate}
    \item[(b)] If $c_M^Q\cdot B_k^{\CC^{\times}}(Q,P)\cdot c_N^P\neq {0}$, then $M\cong N$. 
\end{enumerate}

This is basically the previous theorem. Indeed, if the given subspace is non-zero then there are idempotents $
\epsilon_{V,U,[L/U,\lambda]}^Q$ and $\epsilon_{T,S,[K/S,\kappa]}^P$ such that $(V/L)^@\cong M$ and $(T/K)@\cong N$ and 
\[
\epsilon_{V,U,[L/U,\lambda]}^Q \cdot B_{k}^{\CC^{\times}}(Q,P) \cdot \epsilon_{T,S,[K/S,\kappa]}^P \neq \{0\}.
\]
Hence by Theorem \ref{thm:nonzeroatoric}, we get $M\cong (V/L)^@\cong (T/K)@\cong N$.

\begin{enumerate}
    \item[(c)] The sum of Bouc's idempotents $c_M^P$, as $M$ runs over all groups in $[\calA t_p]$, is equal to the identity element of $B_k^{\CC^{\times}}(P,P)$.
\end{enumerate}

As in the case of bisets, the idempotents $\epsilon_{T, S, [K, \kappa]}^P$ sum up to the identity, as $(T, S)$ runs over all minimal sections up to conjugation; $(K, \kappa)$ over all pairs in $\calM_{\Phi(T/S)}^{T/S}$, up to automorphisms of $T/S$. Now we can rearrange the terms by fixing the atoric part of $T/K$'s, we get the result.

\begin{enumerate}
    \item[(d)] For all $a\in B_k^{\CC^{\times}}(Q,P)$ we have
        \[
            c_M^Q\cdot a=a\cdot c_M^P.
        \]        
         
        \item[(e)] The family of elements $c_M^P\in B_k^{\CC^{\times}}(P,P)$, for finite $p$-groups $P$, is an idempotent endomorphism $c_M$ of the identity functor of the category $k\calC_p$. That is, it is an idempotent of the center of $k\calC_p$. The idempotent elements $c_M$, for $M\in [\calA t_p]$, are orthogonal, and their sum is equal to the identity element of the center of $k\calC_p$.
        \item[(f)]   For a given finite $p$-group $P$, the elements $c_M^P$, for $M\in [\calA t_p]$ such that $M\sqsubseteq P^@$, are non-zero orthogonal central idempotents of $B_k^{\CC^{\times}}(P,P)$, and their sum is equal to the identity of $B_k^{\CC^{\times}}(P,P)$.
    \end{enumerate}

Proofs of these properties are exactly the same as the proofs in \cite{double} of parts (3), (4) and (5) of Theorem 7.4. We omit the details.
\end{nothing}

\begin{nothing}
These properties lead to the decomposition of the category $\mathcal F_{p, k}^{\mathbb C^\times}$. Indeed, let $F$ be a $\ctimes$-fibered $p$-biset functor and $M$ be an atoric $p$-group.
As $P$ runs over all finite $p$-groups, the family $c_M^P\in B_k^{\CC^{\times}}(P,P)$ of idempotents defines a natural transformation 
\[
c_{M, F}: F\to F \, \text{given by}\, c_{M, F}(P)(w) = c_M^P\cdot w \, \text{for each}\, w\in F(P).
\]
Moreover if $F'$ is also a $\ctimes$-fibered biset functor, then for any natural transformation $\theta: F\to F'$ 
we have $$c_{M, F'}\circ \theta = \theta\circ c_{M, F}.$$ In particular $c_{M, ?}$ is an idempotent in the center of the category $\mathcal F_{p, k}^{\mathbb C^\times}$. Following Bouc, we denote this idempotent by $\widehat{c}_M$ and call it the \emph{Bouc's idempotent at $M$}. By the above properties of Bouc's idempotents, it is straightforward to show that as $M$ runs over all atoric $p$-groups up to isomorphism, we get a decomposition of the identity functor of the category $\mathcal F_{p, k}^{\mathbb C^\times}$ into a sum of orthogonal idempotents
\[
\id_{\mathcal F_{p, k}^{\mathbb C^\times}} = \sum_{M\in [\calA t_p]}\widehat{c}_M.
\]
Now writing $\widehat{c}_MF$ for the image of $c_{M, F}$, we obtain a canonical decomposition 
\[
F \cong \bigoplus_{M\in[\calA t_p]} \widehat{c}_MF
\]
for any $\ctimes$-fibered $p$-biset functor $F$. This decomposition is natural in $F$ and $\widehat{c}_MF$ inherits its functor structure from $F$.

This is exactly the fibered $p$-biset functor version of Bouc's decomposition of $p$-biset functors. We state the final result as a theorem for future reference.
\end{nothing}
\begin{theorem}[Block decomposition of $\mathcal{F}_{p,k}^{\mathbb{C}^\times}$]
\label{thm:block-decomposition}
Let $p$ be a prime number such that $p \in k^\times$. For each atoric $p$-group $M \in [\mathrm{At}_p]$, define the full subcategory
\[
\widehat{c}_M\mathcal{F}_{p,k}^{\mathbb{C}^\times} := \left\{ F \in \mathcal{F}_{p,k}^{\mathbb{C}^\times} \,\middle|\, \widehat{c}_M F = F \right\},
\]
consisting of those functors on which $\widehat{c}_M$ acts trivially. Then:

\begin{enumerate}[(1)]
    \item Each category $\widehat{c}_M\mathcal{F}_{p,k}^{\mathbb{C}^\times}$ is an abelian subcategory of $\mathcal{F}_{p,k}^{\mathbb{C}^\times}$, closed under taking subfunctors, quotients, and extensions.
    
    \item The family of orthogonal idempotents $\{ \widehat{c}_M \}_{M \in [\mathrm{At}_p]}$ induces an equivalence of categories
    \[
    \mathcal{F}_{p,k}^{\mathbb{C}^\times} \cong \prod_{M \in [\mathrm{At}_p]} \widehat{c}_M\mathcal{F}_{p,k}^{\mathbb{C}^\times},
    \]
    via the functor
    \[
    F \mapsto \left( \widehat{c}_M F \right)_{M \in [\mathrm{At}_p]}.
    \]
    
    \item Each functor $F \in \mathcal{F}_{p,k}^{\mathbb{C}^\times}$ admits a canonical decomposition
    \[
    F = \bigoplus_{M \in [\mathrm{At}_p]} \widehat{c}_M F.
    \]
\end{enumerate}
\end{theorem}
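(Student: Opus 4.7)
The plan is to deduce this theorem essentially formally from the properties of the central idempotents $\widehat{c}_M$ already established in properties (a)--(f) of Bouc's idempotents. Since $\{\widehat{c}_M\}_{M \in [\calA t_p]}$ is a complete family of orthogonal idempotents in the center of $\calF_{p,k}^{\ctimes}$, the result is a standard consequence of the theory of idempotent decompositions in abelian categories; my job is just to unpack it carefully.

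For part (3), I would start from the pointwise identity $\id_{F(P)} = \sum_{M \in [\calA t_p]} c_M^P$, applied to an arbitrary $F \in \calF_{p,k}^{\ctimes}$ at each finite $p$-group $P$. Setting $(\widehat{c}_M F)(P) := c_M^P \cdot F(P)$ gives a decomposition $F(P) = \bigoplus_M (\widehat{c}_M F)(P)$ as a $k$-module. The crucial point is that this defines a subfunctor of $F$: for any morphism $a \in B_k^{\ctimes}(Q,P)$, property (d) gives $c_M^Q \cdot a = a \cdot c_M^P$, which means $F(a)$ carries $(\widehat{c}_M F)(P)$ into $(\widehat{c}_M F)(Q)$. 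Thus $\widehat{c}_M F$ is a well-defined subfunctor, the decomposition $F = \bigoplus_M \widehat{c}_M F$ is natural in $F$, and clearly $\widehat{c}_M F \in \widehat{c}_M \calF_{p,k}^{\ctimes}$.

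For part (1), closure under subfunctors and quotients is immediate: if $G \subseteq F$ and $\widehat{c}_M F = F$, then $c_M^P$ acts as the identity on $G(P) \subseteq F(P)$ as well, and similarly for quotients. Closure under extensions follows because the endofunctor $F \mapsto \widehat{c}_M F$ is exact (it is defined by multiplication by an idempotent, which is a direct summand projection at each evaluation). Given a short exact sequence with outer terms in $\widehat{c}_M \calF_{p,k}^{\ctimes}$, applying this exact functor and using the five-lemma style argument --- or more directly, observing that the complementary summand $(1 - \widehat{c}_M) F$ vanishes because both outer terms vanish there --- yields $\widehat{c}_M F = F$. Abelianness then follows since kernels and cokernels taken in $\calF_{p,k}^{\ctimes}$ remain in the subcategory.

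For part (2), I would exhibit the inverse equivalence explicitly: define $\Psi : \prod_M \widehat{c}_M \calF_{p,k}^{\ctimes} \to \calF_{p,k}^{\ctimes}$ by $\Psi\big((F_M)_M\big) := \bigoplus_M F_M$, and the forward functor $\Phi$ by $\Phi(F) := (\widehat{c}_M F)_M$. Then $\Psi \circ \Phi \cong \id$ is precisely the decomposition of part (3), while $\Phi \circ \Psi \cong \id$ follows from orthogonality: $\widehat{c}_N\big(\bigoplus_M F_M\big) = \bigoplus_M \widehat{c}_N F_M = F_N$, since $\widehat{c}_N F_M = \widehat{c}_N \widehat{c}_M F_M = 0$ for $M \neq N$ and $\widehat{c}_N F_N = F_N$ by hypothesis. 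The only technically subtle point anywhere in this argument is verifying that $\widehat{c}_M F$ is genuinely a subfunctor (not just a pointwise submodule), and this reduces entirely to the centrality property (d) of the $c_M^P$; everything else is formal bookkeeping with orthogonal idempotents.
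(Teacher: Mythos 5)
Your proposal is correct and follows essentially the same approach as the paper: the paper likewise establishes that the $\widehat{c}_M$ are orthogonal central idempotents summing to the identity (using property~(d) for centrality and properties (a)--(f) for completeness and orthogonality) and then records the theorem as a formal consequence, calling the deduction ``straightforward'' rather than writing out the bookkeeping you have carefully unpacked.
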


\begin{nothing}
    Having the above decomposition, a natural direction is to describe the blocks $\widehat{c}_M\calF_{p,k}^\ctimes$ for arbitrary atoric $p$-groups more explicitly. In \cite{double}, Bouc obtained detailed results on the structure of these blocks in a very elegant way. Although one might attempt to generalize his ideas to the case of fibered $p$-biset functors, we decided not to go in this direction since the results are expected to be much more complicated compared to the case of $p$-biset functors. While a complete classification of the fibered blocks is expected to be far more involved than in the unfibered case, we turn our attention to a coarser but structurally useful invariant: the vertex of an indecomposable functor.
    We shall apply this to obtain further results about fibered $p$-biset functors when we regard them as biset functors.
\end{nothing}

\begin{defi}
    Let $F$ be an indecomposable $\CC^{\times}$-fibered $p$-biset functor over $k$. The unique atoric $p$-group $M\in [\calA t_p]$ such that $F=\widehat{c}_MF$ is called the \textit{vertex} of $F$. 

    This terminology is in parallel to the theory of vertices for indecomposable $p$-biset functors in the sense of Bouc \cite[Definition 9.2]{double}.
\end{defi}
\begin{remarki}
\label{rem:vertex-ext}
Let $F$ and $F'$ be indecomposable objects in $\mathcal{F}_{p,k}^{\mathbb{C}^\times}$ with distinct vertices $M, N \in [\calA t_p]$. Then the extension groups between them vanish:
\[
\operatorname{Ext}^\ast_{\mathcal{F}_{p,k}^{\mathbb{C}^\times}}(F, F') = 0.
\]
This follows from the fact that $\mathcal{F}_{p,k}^{\mathbb{C}^\times}$ decomposes as a direct product of blocks $\widehat{c}_M \mathcal{F}_{p,k}^{\mathbb{C}^\times}$ for $M\in [\calA t_p]$.

\smallskip

Furthermore, we emphasize that the vertex of a functor does not necessarily coincide with a minimal group for the functor. The following theorem explores this relationship.
\end{remarki}

\begin{theorem}\label{thm:vertexindec}
Let $F$ be an indecomposable $\mathbb{C}^\times$-fibered $p$-biset functor over $k$ with vertex $M \in [\mathrm{At}_p]$, and suppose $P$ is a minimal group for $F$. Then there exists a normal subgroup $K \unlhd P$ such that
\[
K \leq \Phi(P) \cap Z(P), \quad \text{and} \quad M \cong (P/K)^@.
\]
\end{theorem}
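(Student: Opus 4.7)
The plan is to exploit the fact that $F$ being indecomposable with vertex $M$ means $c_M^P$ acts as the identity on $F(P)$, and then use the minimality of $P$ to collapse the defining sum of $c_M^P$ to contributions from the single minimal section $(T,S)=(P,1)$.

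First, I would recall that
\[
c_M^P = \sum_{\substack{(T,S)\in [\Pi(P)]\\ (K/S,\kappa)\in [\calM_{T/S}^{*}]\\ (T/K)^{@} \cong M}} \epsilon_{T,S,[K/S,\kappa]}^P,
\]
and observe that each $\epsilon_{T,S,[K/S,\kappa]}^P$ is, up to a scalar, of the form $\Indinf_{T/S}^P\cdot\varphi_{[K/S,\kappa]}^{T/S}\cdot\Defres_{T/S}^P$, hence acts on $F(P)$ through a map factoring via $F(T/S)$. Since $P$ is a minimal group for $F$, the evaluation $F(T/S)$ vanishes whenever $|T/S|<|P|$. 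For any section $(T,S)$ of $P$ one has $|T/S|\leq|T|\leq|P|$, and equality $|T/S|=|P|$ forces $T=P$ together with $S=1$. Therefore only $(T,S)=(P,1)$ can contribute to $c_M^P\cdot F(P)$.

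Next, I would note that for $(T,S)=(P,1)$ the quotient $N_P(P,1)/P$ is trivial and $\Indinf_{P}^P$, $\Defres_{P}^P$ are identity morphisms, so $\epsilon_{P,1,[K,\kappa]}^P=\varphi_{[K,\kappa]}^P$. Combined with the hypothesis $\widehat{c}_MF=F$, this produces
\[
F(P)\;=\;c_M^P\cdot F(P)\;=\;\sum_{\substack{(K,\kappa)\in[\calM_P^{*}]\\ (P/K)^{@}\cong M}}\varphi_{[K,\kappa]}^P\cdot F(P).
\]
Since $F(P)\neq 0$, at least one summand must be non-zero, so there is a representative $(K,\kappa)$ of one of these orbits for which $\varphi_{[K,\kappa]}^P\cdot F(P)\neq 0$ and $(P/K)^{@}\cong M$. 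By the definition of $\calM_P^{*}$ this $K$ is normal in $P$, contained in $\Phi(P)$, and $\kappa\in\Hom(K,\CC^{\times})$ is both faithful and $P$-invariant.

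Finally, I would deduce $K\leq Z(P)$ from the character-theoretic data: $P$-invariance of $\kappa$ gives $pxp^{-1}x^{-1}\in\ker\kappa$ for all $p\in P$ and $x\in K$, and faithfulness forces $\ker\kappa=1$, so every $x\in K$ is centralized by $P$. Together with $K\leq\Phi(P)$ this yields $K\leq\Phi(P)\cap Z(P)$ and $M\cong(P/K)^{@}$, as required. The only step that genuinely needs checking is the reduction to $(T,S)=(P,1)$, but this follows directly from the order comparison above, so I do not anticipate a serious obstacle.
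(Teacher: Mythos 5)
Your proof is correct and follows essentially the same strategy as the paper: use $F = \widehat{c}_M F$, observe that each $\epsilon_{T,S,[K/S,\kappa]}^P$ factors through $\Defres_{T/S}^P$ and hence kills $F(P)$ unless $(T,S)=(P,1)$ by minimality, then extract a pair $(K,\kappa)\in\calM_P^*$ with $(P/K)^@\cong M$ and deduce centrality of $K$ from faithfulness and $P$-invariance of $\kappa$. The only difference is that you spell out the centrality argument ($[p,x]\in\ker\kappa=1$) in more detail than the paper, which simply asserts that a faithful character on $K\unlhd P$ forces $K$ central and cyclic.
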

\begin{proof}
Since $F = \widehat{c}_M F$, there must exist a summand $\epsilon^P_{T,S,[K/S,\kappa]}$ of $c_M^P$ such that
\[
\epsilon^P_{T,S,[K/S,\kappa]} \cdot F(P) \neq 0,
\]
for some minimal section $(T,S)$ of $P$ and a pair $(K/S, \kappa) \in [M^\ast_{T/S}]$ with $(T/K)^@ \cong M$.
By the definition of $\epsilon^P_{T,S,[K/S,\kappa]}$, we have
\[
\epsilon_{T, S, [K, \kappa]}^P = X\circ \Defres^P_{T/S} 
\] for a virtual fibered $(P, T/S)$-biset $X$. Since $P$ is minimal for $F$, this composition is nonzero only if $T/S \cong P$, hence $T = P$ and $S = 1$. Thus we obtain $$(P/K)^@\cong M.$$ Since $\kappa$ is a faithful character of $K$, it follows that $K$ is a central cyclic subgroup of $P$, and by its choice, it is contained in $\Phi(P)$. Hence the result.
\end{proof}
\begin{nothing}
    As an application of this theorem, we may determine vertices of simple $\ctimes$-fibered biset functors over $\CC$. By Theorem 9.2 of \cite{fibered biset}, these functors are parameterized by quadruples $(P, K, \kappa, [V])$ where $P$ is a finite $p$-group, $K\unlhd P$ is a cyclic normal subgroup of $P$ contained in $Z(P)\cap P'$ and $\kappa$ is a faithful character of $K$. The module $V$ is a simple module over the group algebra $\CC\Gamma_{(P, K,\kappa)}$. Here $\Gamma_{(P, K, \kappa)}$ is a finite group associated to the given triple, as in Section 6 of \cite{fibered biset}. The corresponding simple functor is denoted by $S_{(P, K, \kappa, [V])}$. 
\end{nothing}

\begin{theorem} \label{thm:vertexsimple}
The vertex of the simple $\CC^{\times}$-fibered $p$-biset functor $S_{(P,K,\kappa,V)}$ is $(P/K)^@$.
\end{theorem}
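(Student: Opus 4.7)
The strategy is to identify the Bouc idempotent that acts non-trivially on $V := F(P)$, where $F := S_{(P,K,\kappa,V)}$, and conclude from uniqueness of the vertex. Since $F$ is simple, it is indecomposable, and by Theorem~\ref{thm:block-decomposition} its vertex is the unique atoric $p$-group $M$ with $\widehat{c}_M F \neq 0$. The plan is therefore to prove $\widehat{c}_{(P/K)^@} F \neq 0$, which forces the vertex to be $(P/K)^@$.

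Since $P$ is a minimal group for $F$, every proper subquotient $Q$ of $P$ satisfies $F(Q) = 0$. Hence for every minimal section $(T,S)\neq (P,1)$ of $P$, the summand $\epsilon^P_{T,S,[K'/S,\kappa']}$ of $c^P_{(P/K)^@}$ factors through $\Defres^P_{T/S}$ and so acts as zero on $V$. The only surviving summands have $T=P$, $S=1$; these simplify to $\varphi^P_{[K'',\kappa'']}$ (as $N_P(P,1)/P$ is trivial) with $(K'',\kappa'')\in\calM_P^*$ and $(P/K'')^@\cong(P/K)^@$. In particular $\varphi^P_{[K,\kappa]}$ is one such summand, so it suffices to show $\varphi^P_{K,\kappa}\cdot V \neq 0$ (this implies $\varphi^P_{[K,\kappa]}\cdot V \neq 0$ by orthogonality of the $\varphi^P_{K',\kappa'}$ within the orbit, established in Theorem~\ref{phiortogonal}).

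To analyze the action of $\varphi^P_{K,\kappa}$ on $V$, first observe that $\widetilde{e_P^P}$ acts as the identity on $V$: in the Burnside decomposition $1 = \sum_{H\le_P P} \widetilde{e_H^P}$ every summand with $H<P$ factors through $\Res^P_H$ which vanishes on $V$. Hence by Proposition~\ref{phikappaiso}(a),
\[
\varphi^P_{K,\kappa}\cdot v \;=\; \sum_{(K,\kappa)\le (L,\lambda)\in\calM_{\Phi(P)}^P} \mu_{\unlhd P}(K,L)\, e_{(L,\lambda)}^P\cdot v
\qquad\text{for } v\in V.
\]
By Lemma~\ref{infdefonipot}(d), each $e_{(L,\lambda)}^P$ factors through the proper quotient $P/\hat L$ unless $\lambda$ is faithful, so only the faithful-$\lambda$ terms contribute. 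Among these, Proposition~\ref{eKtimeseL} gives $e_{(L,\lambda)}^P e_{(K,\kappa)}^P = e_{(L,\lambda)}^P$ whenever $(K,\kappa)\le(L,\lambda)$, so the whole expression lies in $e_{(K,\kappa)}^P V$; combined with Möbius inversion in the poset of normal subgroups of $P$ contained in $\Phi(P)$, this isolates the contribution of $e_{(K,\kappa)}^P$ itself. The latter is non-zero on $V$ because $V$ is, by construction, the simple $\overline{E}_P$-module attached to the reduced pair $(K,\kappa)$: the defining non-vanishing $f_{(K,\kappa)}^P V\neq 0$, together with the identity $f_{(K,\kappa)}^P = e_{(K,\kappa)}^P\, f_{(K,\kappa)}^P$ (immediate from Proposition~\ref{eKtimeseL}), forces $e_{(K,\kappa)}^P V \neq 0$.

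The main obstacle is the Möbius bookkeeping in the third paragraph: one must verify that the restriction to faithful $\lambda$ together with the Möbius weighting over $\calM_{\Phi(P)}^P$ leaves a genuinely non-zero contribution rather than collapsing to zero. The required cancellations are in the same spirit as those carried out in the proof of Theorem~\ref{phiortogonal}, where the multiplicative structure of the idempotents $e_{(K,\kappa)}$ is exploited in an analogous way.
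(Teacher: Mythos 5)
Your overall strategy is different from the paper's: you try to show directly that $\widehat{c}_{(P/K)^@}F\neq 0$ by proving $\varphi_{K,\kappa}^P\cdot V\neq 0$, whereas the paper runs the argument from both ends, first invoking Theorem~\ref{thm:vertexindec} to get a vertex of the form $(P/N)^@$ for some $N\leq\Phi(P)\cap Z(P)$, and then establishing the two inclusions $N\leq K$ and $K\leq N$ separately, by expanding $\varphi_{N,\eta}^P\cdot f_{(K,\kappa)}^P$ in two different ways and invoking Proposition~5.6 of \cite{fibered biset} on one side and Theorem~\ref{phiortogonal2} on the other. Your setup is correct: you rightly observe that for a minimal group only the section $(T,S)=(P,1)$ survives, that $\widetilde{e_P^P}$ acts as the identity on $V$, and that it suffices to show $\varphi_{K,\kappa}^P V\neq 0$.

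However, there is a genuine gap exactly where you flag it. Knowing that $\varphi_{K,\kappa}^P v$ lies in $e_{(K,\kappa)}^P V$ and that $e_{(K,\kappa)}^P V\neq 0$ does \emph{not} imply that the full alternating sum $\sum_{(K,\kappa)\leq(L,\lambda)\in\calM_{\Phi(P)}^P}\mu_{\unlhd P}(K,L)\,e_{(L,\lambda)}^P v$ is nonzero for some $v$; the Möbius-weighted terms could cancel inside $e_{(K,\kappa)}^PV$. Restricting to faithful $\lambda$ also does not by itself ``isolate $e_{(K,\kappa)}^P$'' as you claim, since faithful $\lambda$ on $L\gneq K$ with $\lambda|_K=\kappa$ certainly occur when $Z(P)\cap[P,P]$ contains cyclic groups strictly between $K$ and $\Phi(P)$. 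This missing step is where the real content lies.

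The good news is that the gap can be closed along the lines you started, by working in the essential algebra $\overline{E}_P$. Since $V$ is a simple $\overline{E}_P$-module, every idempotent $e_{(L,\lambda)}^P$ with $(L,\lambda)$ \emph{not} reduced lies in $I_P$ and hence acts as zero on $V$ (this subsumes your observation about non-faithful $\lambda$, but covers more). On the other hand, every reduced pair $(L,\lambda)\geq(K,\kappa)$ automatically lies in $\calM_{\Phi(P)}^P$, because $L\leq Z(P)\cap[P,P]\leq\Phi(P)$ for a $p$-group. Therefore, after applying $\widetilde{e_P^P}=\mathrm{id}_V$, the Möbius sums defining $\varphi_{K,\kappa}^P$ and $f_{(K,\kappa)}^P$ act \emph{identically} on $V$: their index sets $\calM_{\Phi(P)}^P$ and $\calM_P^P$ differ only in terms that vanish on $V$, and the Möbius weights $\mu_{\unlhd P}(K,L)$ agree. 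Hence $\varphi_{K,\kappa}^P V= f_{(K,\kappa)}^P V\neq 0$ by the defining non-vanishing of $f_{(K,\kappa)}^P$ on $V$. With this in place your argument goes through and arguably yields a cleaner justification than the paper's two-sided inclusion argument; but as written, the proposal states the conclusion of the Möbius analysis without providing it.
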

\begin{proof}
Since $S=S_{(P, K, \kappa, [V])}$ is indecomposable and $P$ is minimal for $S$, by Theorem \ref{thm:vertexindec}, the vertex of $S$ is $(P/N)^@$ for some normal subgroup $N$ of $P$ contained in $Z(P)\cap \Phi(P)$. We claim that $N$ can be chosen as $K$. 

By the construction of $S$ given in \cite[Section 9]{fibered biset}, we have that
\[
f_{(K,\kappa)}^P\cdot S(P)\not=0
\]
where $f_{(K,\kappa)}^P$ is as in Section \ref{sec:prelim}. See \cite[Section 4]{fibered biset} for details.
On the other hand, setting $M=(P/N)^@$, we must have
\[
c_M^PS(P) = S(P).
\]
As in the proof of Theorem \ref{thm:vertexindec}, this implies that there exists a pair $(N', \eta)$ such that $(P/N')^@ \cong M$ and
\[
\epsilon_{P, 1, [N', \eta]}^PS(P) \not= 0.
\]
Without loss of generality we put $N = N'$. By definition,
\[
\epsilon_{P, 1, [N, \eta]}^P = \varphi_{[N,\eta]}^P
\]
hence we must have 
\[
\varphi_{N, \eta}^P S(P)\not= 0.
\]
This implies that $\varphi_{N, \eta}^P\cdot f_{(K, \kappa)}^P\not=0$. Hence
\[
0\not= \varphi_{N, \eta}^P\cdot f_{(K, \kappa)}^P = \sum_{(N,\eta)\le (N', \eta')\in\calM_{\Phi(P)}^{P}} \mu_{\unlhd P}(N, N')\widetilde{e_P^P}\cdot e_{N', \eta'}^P\cdot f_{K, \kappa}^P
\]
By \cite[Proposition 5.6]{fibered biset}, we get that
$(N',\eta')\le(K,\kappa)$, which in particular implies that $N\le K$.

On the other hand, we have
\[
0\not= \varphi_{N, \eta}^P\cdot f_{(K, \kappa)}^P = \sum_{(K, \kappa)\le (K', \kappa')\in\calM_P^P}\mu_{\unlhd P}(K, K')\varphi_{N, \eta}^P\cdot e_{(K', \kappa')}^P
\]
In particular, $\varphi_{N, \eta}^P\cdot e_{(K', \kappa')}^P$ must be non-zero for some $(K', \kappa')$. By 
Theorem \ref{phiortogonal2}, we must have $K'\cap\Phi(P)\le N$. But $K\le K'\cap \Phi(P)$, hence we obtain $K\le N$. Together with the previous paragraph, we get $N=K$, as required.

\end{proof}

\begin{coro}
Let $S = S_{(P, K, \kappa, [V])}$ and $S' =S_{(Q, L, \lambda, [W])}$ be simple $\ctimes$-fibered $p$-biset functors.
 If $(P/K)^@\not \cong (Q/L)^@$, then $\Ext^*_{\calF_{p,k}}(S, S')=\{0\}$. 
\end{coro}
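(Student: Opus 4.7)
The plan is to reduce this corollary to two previously established results: the identification of vertices of simple functors in Theorem \ref{thm:vertexsimple}, and the vanishing of Ext between indecomposables with distinct vertices recorded in Remark \ref{rem:vertex-ext}. Since $S$ and $S'$, being simple, are a fortiori indecomposable, the notion of vertex applies to them.

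First, I would invoke Theorem \ref{thm:vertexsimple} to identify the vertices: the vertex of $S$ is $M := (P/K)^@$ and the vertex of $S'$ is $N := (Q/L)^@$. The hypothesis $(P/K)^@ \not\cong (Q/L)^@$ then says precisely that $M$ and $N$ are distinct elements of $[\calA t_p]$. Consequently, by the definition of the vertex, we have $S = \widehat{c}_M S$ and $S' = \widehat{c}_N S'$, so $S$ lies in the block $\widehat{c}_M \mathcal{F}_{p,k}^{\mathbb{C}^\times}$ while $S'$ lies in the distinct block $\widehat{c}_N \mathcal{F}_{p,k}^{\mathbb{C}^\times}$.

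Second, I would appeal to Theorem \ref{thm:block-decomposition}, which presents $\mathcal{F}_{p,k}^{\mathbb{C}^\times}$ as the direct product of the abelian subcategories $\widehat{c}_M \mathcal{F}_{p,k}^{\mathbb{C}^\times}$, each closed under subfunctors, quotients and extensions. In any such product decomposition of abelian categories, morphisms and extensions between objects lying in distinct factors must vanish: for $\mathrm{Ext}^0$ this is immediate from orthogonality of the idempotents $\widehat{c}_M \widehat{c}_N = 0$, and for higher $\mathrm{Ext}^n$ it follows because an injective (or projective) resolution may be taken blockwise, so that $\mathrm{Hom}$-complexes between objects in different blocks are identically zero. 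Applying this to $S$ and $S'$ gives $\mathrm{Ext}^\ast_{\mathcal{F}_{p,k}^{\mathbb{C}^\times}}(S, S') = 0$ as required.

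There is no genuine obstacle here: the entire content of the corollary has been packaged into Theorem \ref{thm:vertexsimple} and the block decomposition. The only minor point to be careful about is the passage from vanishing of $\mathrm{Ext}^1$ (which follows directly from the category being a product) to vanishing of all higher $\mathrm{Ext}^n$; this is a standard consequence of the product structure of abelian categories and deserves at most a sentence of justification in the final write-up.
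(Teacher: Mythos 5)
Your argument is exactly the one the paper intends: identify the vertices of $S$ and $S'$ via Theorem \ref{thm:vertexsimple}, then invoke the block decomposition (Theorem \ref{thm:block-decomposition} / Remark \ref{rem:vertex-ext}) to conclude that all $\mathrm{Ext}$ groups between objects in distinct blocks vanish. The paper gives no separate proof for this corollary precisely because it is this immediate consequence of the two preceding results.
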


\begin{nothing}
    Next we consider composition factors of indecomposable functors with a given vertex. It is easy to see that the proof of \cite[Lemma 9.7]{double} holds when $F$ is chosen to be a $\ctimes$-fibered $p$-biset functor. Hence the following version of Theorem 9.8 of \cite{double} also holds. We omit the proof which is almost identical to the original proof of the cited theorem. 
\end{nothing}

\begin{theorem}
    Let $k$ be a field of characteristic different from $p$, and $M$ be an atoric $p$-group. Define
\[
\mathcal{F}_{p,k}^{\mathbb{C}^\times}[M] := \left\{ F \in \mathcal{F}_{p,k}^{\mathbb{C}^\times} \,\middle|\, \text{all composition factors of } F \text{ have vertex } M \right\}.
\]

    \begin{enumerate}
        \item For any $\ctimes$-fibered $p$-biset functor $F$, the subfunctor $\widehat{c}_MF$ is the greatest subfunctor of $F$ which belongs to $F_{p,k}^\ctimes[M]$.
        \item In particular the equality  $$\widehat{c}_M\calF_{p,k}^\ctimes=\calF_{p,k}^\ctimes[M]$$ holds.
    \end{enumerate}
\end{theorem}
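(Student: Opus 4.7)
The plan is to verify two claims from which both assertions follow: (A) every composition factor of $\widehat{c}_M F$ has vertex $M$; (B) if every composition factor of $F$ has vertex $M$, then $F=\widehat{c}_M F$. The central observation used throughout is that, because each $c_M^P$ is an idempotent element of $B_k^\ctimes(P,P)$, for any subfunctor $G\subseteq F$ one has $\widehat{c}_M G=G\cap\widehat{c}_M F$ as subfunctors of $F$, and the endofunctor $\widehat{c}_M$ of $\calF_{p,k}^\ctimes$ is exact, being the projection onto a factor of the product decomposition in Theorem \ref{thm:block-decomposition}.

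For Claim (A), let $S=F''/F'$ be any simple subquotient of $\widehat{c}_M F$, with $F'\subseteq F''\subseteq\widehat{c}_M F$. By the central observation, $\widehat{c}_M F''=F''\cap\widehat{c}_M F=F''$ and similarly $\widehat{c}_M F'=F'$; the exactness of $\widehat{c}_M$ then yields $\widehat{c}_M S=S$. Since $S$ is simple, hence indecomposable, its vertex is $M$ by definition.

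For Claim (B), argue by contradiction. If $F\neq\widehat{c}_M F$ then the complementary subfunctor $(\id-\widehat{c}_M)F=\bigoplus_{N\in[\calA t_p],\,N\neq M}\widehat{c}_N F$ is nonzero, so $\widehat{c}_N F\neq 0$ for some $N\neq M$. The fibered version of \cite[Lemma 9.7]{double} noted immediately before the theorem supplies a composition factor $S$ of this nonzero functor, and Claim (A) with $N$ in place of $M$ shows that $S$ has vertex $N$. But $S$ is also a composition factor of the ambient $F\in\calF_{p,k}^\ctimes[M]$, so must have vertex $M$ by hypothesis; uniqueness of vertices forces $M=N$, a contradiction. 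Hence $F=\widehat{c}_M F$.

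With (A) and (B) in hand, part (1) is immediate: $\widehat{c}_M F\in\calF_{p,k}^\ctimes[M]$ by (A), and any subfunctor $F'\subseteq F$ with $F'\in\calF_{p,k}^\ctimes[M]$ satisfies $F'=\widehat{c}_M F'=F'\cap\widehat{c}_M F\subseteq\widehat{c}_M F$ by (B), so $\widehat{c}_M F$ is maximal among such subfunctors. Part (2) follows formally: the inclusion $\widehat{c}_M\calF_{p,k}^\ctimes\subseteq\calF_{p,k}^\ctimes[M]$ is Claim (A) applied with $F=\widehat{c}_M F$, and the reverse inclusion is Claim (B). The main obstacle is the cited Lemma 9.7 in the fibered setting: one must show that every nonzero $\ctimes$-fibered $p$-biset functor admits at least one composition factor. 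The standard route, which the excerpt asserts is essentially identical to Bouc's, is to pick a nonzero $v\in F(P)$, restrict to the cyclic subfunctor $\langle v\rangle$ it generates, select a group $Q$ of minimal order with $\langle v\rangle(Q)\neq 0$, and extract a simple quotient from a maximal $\overline{E}_Q$-submodule of $\langle v\rangle(Q)$, which exists because $\overline{E}_Q$ is a finite-dimensional $k$-algebra.
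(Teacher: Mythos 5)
Your argument is correct and reconstructs exactly the route the paper intends: the paper omits the proof, citing that it is ``almost identical to the original proof'' of \cite[Theorem 9.8]{double} together with the fibered analogue of \cite[Lemma 9.7]{double}, and your Claims (A), (B) together with the existence-of-composition-factor lemma (which you correctly observe carries over because $B_k^{\CC^\times}(Q,P)$, and hence $\overline E_Q$, is finite-dimensional over the field $k$) are precisely Bouc's argument transported to the fibered setting. The only cosmetic slip is the phrase ``extract a simple quotient \emph{from} a maximal $\overline E_Q$-submodule''; you mean the simple quotient \emph{by} such a maximal submodule, which is what produces the desired composition factor.
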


\section{Applications}
\label{Applications}
In this section we consider two applications of the above theorem. The first is an adaptation of Bouc's example.

\begin{nothing} {\bf The monomial Burnside functor $B_k^\ctimes$.} By Theorem 11.2 of \cite{fibered biset}, the functor $B_k^\ctimes$ is indecomposable. Since
\[
B_k^\ctimes(1) = k \neq 0,
\]
its minimal group is the trivial group. By Theorem~\ref{thm:vertexindec}, we conclude that its vertex is also trivial.
It follows that all composition factors of $B_k^\ctimes$ must have vertex $1$. By Theorem~\ref{thm:vertexsimple}, any such simple functor must be of the form $S_{(P,K,\kappa,[V])}$ where:
\begin{enumerate}
    \item $P/K$ is elementary abelian,
    \item $K = \Phi(P) = P'$ is a central cyclic subgroup of $P$.
\end{enumerate}
In the case where \( P \) is non-abelian, we have that the derived subgroup \( P' \) is non-trivial. Since \( P' \leq Z(P) \), it follows that for all \( x, y \in P \), the identity
\[
[x, y]^p = [x^p, y]
\]
holds. But \( x^p \in \Phi(P) \leq Z(P) \), and hence \( [x^p, y] = 1 \). Therefore, \( [x, y]^p = 1 \), implying that each element of \( P' \) has order dividing \( p \). Since $P'$ is non-trivial and cyclic, it must have order \( p \). Groups satisfying these properties are referred to as \emph{generalized extraspecial \( p \)-groups} in the terminology by Stancu and have been classified in~\cite[Lemma 3.2]{S}. In particular, if \( P \) is a non-abelian \( p \)-group with central Frattini subgroup of order \( p \), then \( P \) is isomorphic to one of the following types:
\begin{table}[H]
\centering
\caption{Generalized extraspecial $p$-groups}
\begin{tabular}{|c|c|p{6.5cm}|}
\hline
\textbf{Notation} & \textbf{Center} & \textbf{Description} \\
\hline
\( X_{p^{2\ell+1}} \times E \) & \( \Phi(P) \times E \) & Direct product of an extraspecial \( p \)-group and an elementary abelian \( p \)-group. \\
\hline
\( (X_{p^{2\ell+1}} * C_{p^2}) \times E \) & \( C_{p^2} \times E \) & Direct product of an elementary abelian \( p \)-group with the central product of an extraspecial \( p \)-group and a cyclic group of order \( p^2 \). \\
\hline
\end{tabular}
\end{table}

Hence, any composition factor of \( B_k^\ctimes \) must be of the form \( S_{(P, \Phi(P), \kappa, [V])} \), where \( P \) belongs to one of the following classes:
\begin{itemize}
    \item Elementary abelian \( p \)-groups (in the abelian case),
    \item Generalized extraspecial \( p \)-groups with central Frattini subgroup of order \( p \), as described above.
\end{itemize}

We remark that in \cite[Theorem 15]{CY}, the set of composition factors of $B_k^{\ctimes}$ is determined. The minimal groups appear in these composition factors are all elementary abelian. 
\end{nothing}

\begin{nothing}{\bf Restriction from $\calF_{p,k}^\ctimes$ to $\calF_{p,k}$.} 
Let $F \in \mathcal{F}_{p,k}^{\mathbb{C}^\times}$ be a $\mathbb{C}^\times$-fibered biset functor. We can view $F$ as an ordinary biset functor by forgetting the fiber structure on morphisms:
\[
\mathcal{F}_{p,k}^{\mathbb{C}^\times} \longrightarrow \mathcal{F}_{p,k}, \quad F \mapsto F^\flat,
\]
where $F^\flat(P) = F(P)$ and the action of a biset $X \in B(Q,P)$ on $F(P)$ is defined via the obvious inclusion $kB(Q,P)\to B_k^{\mathbb{C}^\times}(Q,P)$.

Let $M \in [\mathrm{At}_p]$ be an atoric $p$-group, and suppose that $F \in \mathcal{F}_{p,k}^{\mathbb{C}^\times}$ satisfies $\widehat{c}_M F = F$, i.e., $F$ lies in the block corresponding to $M$. Then the underlying biset functor $F^\flat$ may decompose into summands with vertices different from $M$. To determine possible blocks which may contain summands of $F^\flat$, one needs to determine which idempotents $b_L^P$ from the classical biset category satisfy
\[
b_L^P \cdot F^\flat(P) \neq 0.
\]

In general, consider the block $\widehat{c}_M\calF_{p,k}^\ctimes$ and its restriction $(\widehat{c}_M\calF_{p,k}^\ctimes)^\flat$ as a subcategory of $\calF_{p,k}$ of $p$-biset functors. Then we obtain a decomposition 
\[
(\widehat{c}_M\calF_{p,k}^\ctimes)^\flat \cong \prod_{L\in [\calA t_p]} \widehat{b}_L \widehat{c}_M\calF_{p,k}^\ctimes.
\]
It is clear that a term $\widehat{b}_L \widehat{c}_M\calF_{p,k}^\ctimes$ is non-zero if and only if the composition $\widehat{b}_L \widehat{c}_M$ is non-zero. Our next aim is to determine all atoric $p$-groups $L$ for which $\widehat{b}_L \widehat{c}_M \not=0$ holds for a given atoric $p$-group $M$.
\end{nothing}

\begin{theorem}
    Let $M$ and $L$ be atoric $p$-groups. The following are equivalent.
    \begin{enumerate}
        \item $\widehat{b}_L\cdot \widehat{c}_M \not=0$
        \item There exists a finite $p$-group $Q$ and $K, S\unlhd Q$ such that 
        \begin{itemize}
            \item $K, S\le\Phi(Q)$, $K\cap S=\1$ and $K$ is central and cyclic,
            \item  the isomorphisms $(Q/K)^@\cong M$ and $(Q/S)^@\cong L$ hold.
        \end{itemize}
    \end{enumerate}
\end{theorem}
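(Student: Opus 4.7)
The strategy is to reduce $\widehat{b}_L\widehat{c}_M\ne 0$ to an explicit non-vanishing condition in the fibered double Burnside algebra, by decomposing Bouc's unfibered idempotent inside the fibered algebra. The first step is to establish the identity
\[
\varphi_1^G = \sum_{(L,\lambda)\in\calM_G^*} \varphi_{L,\lambda}^G
\]
in $B_k^\ctimes(G,G)$, where the left-hand side is Bouc's idempotent from $B_k(G,G)$ included in $B_k^\ctimes(G,G)$ via the canonical map $[G\times G/U]\mapsto [G\times G/(U,1)]$. I would derive this by substituting the relation $\widetilde{e_{L,1}^G}=\sum_{(L,1)\le (L',\lambda')}\varphi_{L',\lambda'}^G$ from Equation~\eqref{Ydeltaassumphi} into the Bouc expansion $\varphi_1^G = \sum_L \mu_{\unlhd G}(1,L)\,\widetilde{e_{L,1}^G}$ and switching the order of summation; the resulting coefficient of $\varphi_{L',\lambda'}^G$ becomes $\sum_{L\le L'\cap\ker\lambda'}\mu_{\unlhd G}(1,L)$, which by the defining identity of the Möbius function evaluates to $1$ precisely when $\lambda'$ is faithful on $L'$.

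This identity propagates through induction-inflation and deflation-restriction to give the refinement $\beta_{T,S}^P = \sum_{[L',\lambda']\in [\calM_{T/S}^*]} \epsilon_{T,S,[L',\lambda']}^P$ of Bouc's block idempotents, so $b_L^P$ itself admits an expansion as a sum of fibered $\epsilon$-idempotents. Combining this with the analogous expansion of $c_M^P$ and applying the orthogonality of Theorem~\ref{uvorthogonal} to eliminate cross-terms, one finds
\[
b_L^P\cdot c_M^P = \sum_{\substack{(T,S)\in[\Pi(P)],\,(T/S)^@\cong L\\ [L',\lambda']\in [\calM_{T/S}^*],\,((T/S)/L')^@\cong M}}\epsilon_{T,S,[L',\lambda']}^P.
\]
Thus $\widehat{b}_L\widehat{c}_M\ne 0$ is equivalent to the existence of a finite $p$-group $T$, a minimal section $(T,S)$ of $T$, and a pair $(L',\lambda')\in\calM_{T/S}^*$ such that $(T/S)^@\cong L$ and $((T/S)/L')^@\cong M$.

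For $(1)\Rightarrow(2)$, I would take $Q=T/S$, set $K=L'$ (which is a central cyclic subgroup of $\Phi(Q)$ since $\lambda'$ is faithful and $G$-invariant), and let the $S$ of the theorem be trivial; the conditions $K\cap S=1$, $K,S\le\Phi(Q)$, $(Q/K)^@\cong M$ and $(Q/S)^@\cong L$ then hold directly. For $(2)\Rightarrow(1)$, starting from $Q,K,S$ satisfying $(2)$, I would take $P=Q$ and the minimal section $(T,S_0)=(Q,S)$, together with the subgroup $L'=KS/S\cong K$ of $\Phi(Q/S)$ and a faithful character $\lambda'$ on it. Since $(T/S_0)^@=(Q/S)^@\cong L$ by hypothesis, the remaining task is to verify $((T/S_0)/L')^@\cong M$, i.e.\ $(Q/KS)^@\cong (Q/K)^@$. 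This is the main technical obstacle: the hypothesis $K\cap S=1$ ensures that $KS$ is the internal direct product $K\times S$ and, via Bouc's calculus of the atoric part functor (notably Proposition~6.6 of \cite{double}), should control how the image of $S$ in $Q/K$ interacts with $\Phi(Q/K)$, from which the desired isomorphism is expected to follow.
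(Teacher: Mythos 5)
Your path to the non-vanishing criterion is correct where it applies and is genuinely different from the paper's. The key identity $\varphi_1^G = \sum_{(L,\lambda)\in\calM_G^*}\varphi_{L,\lambda}^G$ (and, by the same Möbius computation, $\varphi_N^G = \sum_{(L',\lambda')\in\calM_{\Phi(G)}^G,\ \ker\lambda'=N}\varphi_{L',\lambda'}^G$) is valid, and from it one does get $\epsilon_{T,S}^P = \sum_{[L',\lambda']\in[\calM_{T/S}^*]}\epsilon_{T,S,[L',\lambda']}^P$ and hence, by orthogonality of the $\epsilon$'s,
\[
b_L^P\cdot c_M^P = \sum_{\substack{(T,S)\in[\Pi(P)],\ (T/S)^@\cong L\\ [L',\lambda']\in[\calM_{T/S}^*],\ ((T/S)/L')^@\cong M}}\epsilon_{T,S,[L',\lambda']}^P.
\]
The paper reaches the same constraints by a longer route: pushing $b_L^P$ through $\Indinf_{V/U}^P$ via \cite[Theorem~7.4(3)]{double}, collapsing $\epsilon_{T,S}^{V/U}\cdot\widetilde{e_{V/U}^{V/U}}$ to $\varphi_S^{V/U}$ by \cite[Lemma~2.18]{double}, and then analysing $\varphi_S^Q\cdot\varphi_{K,\kappa}^Q$ by Möbius-weighted products of $e_{(K',\kappa')}^Q e_{(S',1)}^Q$. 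Your derivation buys a one-step diagonalization; and your $(1)\Rightarrow(2)$ with $S=1$ is correct and in fact stronger than the stated condition.

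Your $(2)\Rightarrow(1)$, however, is not merely incomplete: the ``expected'' isomorphism $(Q/KS)^@ \cong (Q/K)^@$ does \emph{not} follow from $K\cap S=1$. Bouc's Proposition~6.6 requires $(KS/K)\cap\Phi(Q/K)=1$, but since $K,S\le\Phi(Q)$ we have $(KS/K)\cap\Phi(Q/K)=(KS/K)\cap(\Phi(Q)/K)=KS/K$, which is nontrivial unless $S=1$. Concretely, with $Q=C_{p^2}\times C_{p^2}$ and $K,S$ the two Frattini factors we have $K\cap S=1$, $(Q/K)^@\cong C_{p^2}$, but $(Q/KS)^@\cong(C_p\times C_p)^@=1$. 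So passing from $(Q,K,S)$ to $(Q/S,\ KS/S,\ 1)$ changes the $M$-invariant, and this half of the argument needs a genuinely different idea.

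You should also note that your own identity for $\varphi_N^G$ exposes a difficulty in the paper's converse step: since $\varphi_S^Q=\sum_{\ker\lambda'=S}\varphi_{L',\lambda'}^Q$ and the fibered $\varphi$'s are mutually orthogonal (Theorem~\ref{phiortogonal}), one has $\varphi_S^Q\cdot\varphi_{K,\kappa}^Q=[\,\ker\kappa=S\,]\,\varphi_{K,\kappa}^Q$. For $\kappa$ faithful this vanishes whenever $S\ne 1$, \emph{irrespective} of whether $K\cap S=1$. So the paper's claim that $K\cap S=1$ forces $\varphi_S^Q\cdot\varphi_{[K,\kappa]}^Q\ne 0$ does not hold as stated, and the converse implication $(2)\Rightarrow(1)$ (for $S\ne 1$) warrants independent justification. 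Your instinct to flag this direction as the obstruction, rather than gloss over it, was the right call.
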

\begin{proof}
    Suppose the product $\widehat{b}_L\cdot \widehat{c}_M$ is non-zero. This holds if and only if there is a finite $p$-group $P$ such that $b_L^P\cdot c_M^P \not=0$. We evaluate this product.
    \begin{eqnarray*}
       0\not= b_L^P\cdot c_M^P &=& \sum_{\substack{(V,U)\in[\Pi(P)],\\ (K/U, \kappa)\in\calM_{(V/U)}^{*}\\ (V/K)^@\cong M}} b_L^P\cdot \epsilon_{V, U, [K/U, \kappa]}^P \\
        &=& \sum_{\substack{(V, U)\in[\Pi(P)],\\ (K/U, \kappa)\in\calM_{(V/U)}^{*}\\ (V/K)^@\cong M}} \frac{1}{|N_P(V, U):V|} b_L^P\cdot \Indinf_{V/U}^P \varphi_{[K/U, \kappa]}^{V/U}\Defres^P_{V/U} \\
        &=& \sum_{\substack{(V, U)\in[\Pi(P)],\\ (K/U, \kappa)\in\calM_{(V/U)}^{*}\\ (V/K)^@\cong M,\\ L \sqsubseteq (V/U)^@}} \frac{1}{|N_P(V, U):V|} \Indinf_{V/U}^P b_L^{V/U}\varphi_{[K/U, \kappa]}^{V/U}\Defres^P_{V/U} \\
    \end{eqnarray*}
    Here the last equality holds by \cite[Theorem 7.4 (3)]{double}. Now we concentrate on a non-zero middle term by expanding $b_L^{V/U}$ as in \cite[Notation 7.3]{double}:
    \begin{eqnarray*}
       0\not = b_L^{V/U}\cdot\varphi_{[K/U, \kappa]}^{V/U} &=& \sum_{\substack{(T,S)\in [\Pi(V/U)]\\ (T/S)^@\cong L}} \epsilon_{T, S}^{V/U} \cdot\big(\widetilde{e_{V/U}^{V/U}}\cdot \varphi_{[K/U, \kappa]}^{V/U}\big)\\
        &=& \sum_{\substack{(T,S)\in [\Pi(V/U)]\\ (T/S)^@\cong L}} \big(\epsilon_{T, S}^{V/U} \cdot\widetilde{e_{V/U}^{V/U}}\big)\cdot \varphi_{[K/U, \kappa]}^{V/U}
    \end{eqnarray*}
Here the idempotents $\epsilon_{T, S}^{V/U}$ are as defined in \cite[Notation 4.6]{double}, see also Section \ref{defn:etskk}. Now the product $\epsilon_{T, S}^{V/U} \cdot\widetilde{e_{V/U}^{V/U}}$ is non-zero only if $T = V/U$ by \cite[Lemma 2.18]{double} (cf. Section 3.8). Also in this case, the product is equal to $\varphi_{S}^{V/U}$ (see \cite[Notation 3.4]{double} or Equation (\ref{Boucsphi})). Hence the sum becomes
 \begin{eqnarray*}
       0\not= b_L^{V/U}\cdot\varphi_{[K/U, \kappa]}^{V/U} 
        &=& \sum_{\substack{(V/U,S)\in [\Pi(V/U)]\\ ((V/U)/S)^@\cong L}} \varphi_S^{V/U}\cdot \varphi_{[K/U, \kappa]}^{V/U}
    \end{eqnarray*}
Once more we choose a non-zero term, say $\varphi_S^{Q}\cdot \varphi_{K/U, \kappa}^{Q}$, where $Q = V/U$. By \cite[Proposition 3.6 (1)]{double} and Proposition phikappaisoThis is equal to
Once more we choose a non-zero term, say $\varphi_S^{Q}\cdot \varphi_{K/U, \kappa}^{Q}$, where $Q = V/U$. By \cite[Proposition 3.6 (1)]{double} and part (1) of Proposition \ref{phikappaiso}, this is equal to
\begin{eqnarray}\label{eqn:phitimesphi}
     \sum_{\substack{(K/U,\kappa)\le (K',\kappa')\in\calM_{\Phi(Q)}^Q\\S'\unlhd Q,\, S\le S'\le\Phi(Q)}}\mu_{\unlhd Q}(K/U, K')\mu_{\unlhd}(S, S')\widetilde{e_Q^Q}\cdot \big( e_{(K',\kappa')}^Q\cdot e_{(S', {\bf 1})}^Q\big)\cdot \widetilde{e_Q^Q}.
    \end{eqnarray}
Finally since the above sum is non-zero, then
\[
e_{(K',\kappa')}^Q\cdot e_{(S', {\bf 1})}^Q \not=0
\]
for some indices $(K',\kappa')$ and $S'$. By \cite[Proposition 4.2]{fibered biset}, this holds if and only if $K'\cap S'\le \ker\kappa'$. Since $\kappa$ is faithful and $\kappa'|_K = \kappa$, the inequality implies $K\cap S = 1$. 
Combining all the conditions appear in the indices, we see that all the required conditions are satisfied for the triple $(Q, K, S)$.

Conversely, suppose that there exists a finite $p$-group $Q$ together with normal subgroups $K, S \unlhd Q$ such that $K, S \le \Phi(Q)$, $K \cap S = 1$, $K$ is central and cyclic, and the isomorphisms $(Q/K)^@ \cong M$ and $(Q/S)^@ \cong L$ hold. Choose a faithful character $\kappa \in K^\ast$. Then the fibered idempotent $\varphi^Q_{[K,\kappa]}$ appears in the decomposition of $\widehat{c}_M$, and the idempotent $\varphi^Q_S$ appears in the decomposition of $\widehat{b}_L$. Since $K \cap S = 1$, by \cite[Proposition 4.2]{fibered biset}, $e_{(K, \kappa)}^Q\cdot e_{(S, \bf 1)}^Q = e_{(KS, \kappa\bf 1)}^Q$ is non-zero and this term survives in the sum (\ref{eqn:phitimesphi}). Hence 
\[
\varphi^Q_S \cdot \varphi^Q_{[K,\kappa]} \ne 0.
\]
which implies that the product $\widehat{b}_L \cdot \widehat{c}_M$ is also nonzero, as it contains a nonzero contribution from the group $Q$.

\end{proof}

\begin{defi}\label{defn:centralresolution}
    Let $M$ and $L$ be atoric $p$-groups. We say that $L$ is a \emph{central resolution} of $M$, written $M \rightsquigarrow L$, if there is a finite $p$-group $Q$ and normal subgroups $K, S\unlhd Q$ such that
    \begin{enumerate}
        \item $K, S\le \Phi(Q)$,
        \item $K\cap S= {\bf 1}$,
        \item $K$ is central and cyclic,
        \item $(Q/K)^@\cong M$,
        \item $(Q/S)^@\cong L$.
    \end{enumerate}
\end{defi}
This definition keeps track of atoric $p$-groups related to a given atoric $p$-group $M$ via the above theorem. One does not expect this relation to be symmetric since we are aiming to resolve the structure of a block of fibered biset functors. However it is possible to refine the definition by replacing $Q$ with an atoric $p$-group. 
\begin{lem}
    Let $M$ and $L$ be atoric $p$-groups. Suppose $L$ is a central resolution of $M$ via the triple $(Q, K, S)$. Then there is a triple $(T, K', S')$ where $T$ is an atoric $p$-group such that $M \rightsquigarrow L$ via $(T, K', S')$.
\end{lem}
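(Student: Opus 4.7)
The plan is to set $T := Q^@$, the atoric part of $Q$, and to take $K' := K$ and $S' := S$, viewed as subgroups of $T$ through a direct product decomposition of $Q$. The main ingredient I would establish first is the standard splitting $Q \cong Q^@ \times E$ with $E$ elementary abelian; once this is in hand, the rest is essentially bookkeeping.

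To prove the splitting, let $N \unlhd Q$ be a maximal normal subgroup with $N \cap \Phi(Q) = 1$, so that $Q^@ = Q/N$ by \cite[Proposition 6.4]{double}. Two observations pin down $N$: first, $N$ embeds in the elementary abelian quotient $Q/\Phi(Q)$, so $N$ is itself elementary abelian; second, $[Q, N] \subseteq [Q,Q] \cap N \subseteq \Phi(Q) \cap N = 1$, so $N$ is central in $Q$. Lifting an $\mathbb{F}_p$-subspace complement of the image of $N$ in $Q/\Phi(Q)$ produces a subgroup $C \le Q$ with $CN = Q$ and $C \cap N \le \Phi(Q) \cap N = 1$; centrality of $N$ then forces $Q = C \times N$ with $C \cong Q^@$. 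Write $Q = T \times E$ accordingly.

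With the splitting available, the conditions of Definition \ref{defn:centralresolution} for $(T, K', S')$ would follow routinely. Because $\Phi(E) = 1$, one has $\Phi(Q) = \Phi(T)$, which places $K$ and $S$ inside $T$, so the choice $K' := K$ and $S' := S$ in $T$ makes sense; conditions (1)-(3) then transfer verbatim, noting that $K$ central in $Q = T \times E$ together with $K \le T$ implies $K$ central in $T$. For conditions (4) and (5), I would observe that $Q/K = T/K \times E$ and invoke the general principle $(A \times E')^@ \cong A^@$ for any elementary abelian $E'$, which follows by a second application of the splitting to $A$; this gives $(T/K')^@ \cong (Q/K)^@ \cong M$ and symmetrically $(T/S')^@ \cong L$.

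The only real obstacle is the splitting $Q \cong Q^@ \times E$ itself, which is not recorded explicitly in the excerpt but is a short and well-known consequence of the description of $Q^@$. Once it is in place, the verification that $(T, K', S')$ witnesses $M \rightsquigarrow L$ reduces to the observation that the atoric-part construction absorbs elementary abelian direct factors, and no further analysis of $(-)^@$ is required.
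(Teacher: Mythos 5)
Your proposal is correct and follows essentially the same route as the paper: both decompose $Q\cong Q^@\times E$ with $E$ elementary abelian, set $T=Q^@$, and move $K,S$ into $T$ (the paper sets $K'=q(K)$, $S'=q(S)$ via the projection $q\colon Q\to T$, while you observe directly that $K,S\le\Phi(Q)=\Phi(T)\le T$ so no projection is needed --- the same subgroups). The only slight divergence is in verifying conditions (4) and (5): the paper appeals to \cite[Proposition~6.6]{double}, applied to the surjection $Q/K\to T/K'$ after checking that its kernel $NK/K$ meets $\Phi(Q/K)$ trivially, whereas you invoke the absorption of elementary abelian direct factors, $(A\times E')^@\cong A^@$; both give the same conclusion with comparable effort.
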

\begin{proof}
    By \cite[Proposition 6.4]{double}, we write $Q\cong N\times T$ where $T = Q^@$ is atoric and $N$ is a normal, elementary abelian subgroup of $Q$ such that $N\cap \Phi(Q) = {\bf 1}$. Let $q: Q\to T$ be the surjective homomorphism with kernel $N$ and set $K' = q(K)$ and $S' = q(S)$. We claim that the triple $(T, K', S')$ works as required in the lemma. We check the conditions. 
    \begin{enumerate}
        \item Since $K$ and $S$ are contained in $\Phi(Q)$, we have that $K' , S'\le q(\Phi(Q))$. But $q(\Phi(Q))\le \Phi(N\times T)$. Hence the first condition holds.
        \item The second and third conditions are clearly satisfied.
        \item To prove that $(T/K')^@\cong M$, consider the homomorphism 
        \[
        \pi: Q/K \to T/K', xK\mapsto q(x)K'
        \]
        This is a well-defined homomorphism and its kernel is equal to $NK/K$. We have 
        \[
        NK/K\cap \Phi(Q/K) = (NK\cap \Phi(Q))/K = K/K.
        \]
        Here the first equality holds since $K\le \Phi(Q)$ and the second holds since $N\cap K = {\bf 1}$. Thus by \cite[Proposition 6.6]{double}, we get $(T/K')^@ \cong (Q/K)^@ \cong M$. Similarly, one can prove that $(T/S')^@\cong (Q/S)^@ \cong L$.
    \end{enumerate}
\end{proof}
\begin{coro}
    Let $M$ be an atoric $p$-group and $L$ be an atoric $p$-group which is a cyclic central Frattini extension of $M$, that is, there is a short exact sequence
    \[
\begin{tikzcd}
0 \arrow[r] & K \arrow[r] & L \arrow[r] & M \arrow[r] & 0
\end{tikzcd}
\] such that $K\le Z(L)\cap \Phi(L)$ is cyclic. Then $M \rightsquigarrow L$. 
\end{coro}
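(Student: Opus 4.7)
The plan is to exhibit an explicit triple $(Q, K', S')$ witnessing $M \rightsquigarrow L$, and the natural candidate is simply $Q = L$ itself, with $K'$ equal to the given kernel of the extension and $S' = 1$. Indeed, the hypotheses on the extension already encode almost all the conditions of Definition~\ref{defn:centralresolution} once we take $Q = L$.

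More precisely, I would verify the five conditions one at a time. For $K' := K$: by assumption $K \le Z(L) \cap \Phi(L)$ is cyclic and central in $L = Q$, and $K \le \Phi(Q)$, so conditions (1) and (3) hold for $K'$. For $S' := 1$: trivially $S' \le \Phi(Q)$ and $K' \cap S' = K \cap 1 = 1$, which gives the remaining part of (1) and all of (2). For condition (4): the short exact sequence gives $Q/K' = L/K \cong M$, and since $M$ is atoric by hypothesis, $M^@ \cong M$, so $(Q/K')^@ \cong M$. For condition (5): $Q/S' = L/1 = L$, and $L$ is atoric, so $(Q/S')^@ = L^@ \cong L$.

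Hence the triple $(L, K, 1)$ satisfies all five conditions of Definition~\ref{defn:centralresolution}, establishing $M \rightsquigarrow L$. The argument is entirely bookkeeping against the definition; there is no genuine obstacle, since the corollary is essentially a direct specialization of the general notion to the case where the ambient group $Q$ is already atoric and one of the normal subgroups is taken to be trivial.
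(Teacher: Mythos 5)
Your proof is correct and uses exactly the same witness triple $(L, K, \mathbf{1})$ as the paper, which simply states that this triple satisfies the conditions of Definition~\ref{defn:centralresolution}. You have merely spelled out the straightforward verification that the paper leaves implicit.
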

\begin{proof}
This follows because the triple $(L, K, {\bf 1})$ satisfies the conditions of Definition \ref{defn:centralresolution}.
\end{proof}
\begin{remarki}
    It turns out that not all triples $(T, K, S)$ can be reduced to the case where $S$ is trivial. Indeed, given an atoric $p$-group $M$ and a non-trivial normal subgroup $S$ of $M$ contained in $\Phi(M)$, then $M\rightsquigarrow M/S$ via the triple $(M, {\bf 1}, S)$. Since $|M/S| < |M|$, there is no triple $(T, K, {\bf 1})$ making $M/S$ a central resolution of $M$.
\end{remarki}

Finally we obtain the following necessary condition for composition factors of $F^\flat$ for a fibered $p$-biset functor $F$. As an application we shall consider the restriction of the simple functor $S_{(\1, \1, \1, [\mathbb C])}$.

\begin{coro}
    Let $F$ be an indecomposable fibered $p$-biset functors with vertex $T$. Then a simple biset functor $S_{Q, W}$ is a composition factor of $F^\flat$ only if $Q^@\rightsquigarrow T$.
\end{coro}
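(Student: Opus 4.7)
The plan is to translate the existence of $S_{Q,W}$ as a composition factor of $F^\flat$ into the non-vanishing of the product of idempotents $\widehat{b}_{Q^@}\cdot\widehat{c}_T$, and then invoke the theorem above characterizing such products in terms of central resolutions.

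First, I recall Bouc's block decomposition $\calF_{p,k}=\prod_{L\in[\calA t_p]}\widehat{b}_L\calF_{p,k}$ from \cite{double}, under which the biset-theoretic vertex of the simple functor $S_{Q,W}$ is $Q^@$; in particular $\widehat{b}_{Q^@}S_{Q,W}=S_{Q,W}$ and $\widehat{b}_L S_{Q,W}=0$ for $L\not\cong Q^@$. Because the projection $\widehat{b}_{Q^@}$ is exact, applying it to a composition series of $F^\flat$ containing $S_{Q,W}$ realizes $S_{Q,W}$ as a composition factor of the summand $\widehat{b}_{Q^@}F^\flat$. Hence $\widehat{b}_{Q^@}F^\flat\neq 0$, so some finite $p$-group $P$ satisfies $b_{Q^@}^P\cdot F^\flat(P)\neq 0$.

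Next, I use the fibered vertex hypothesis $F=\widehat{c}_TF$, which gives $c_T^P\cdot F(P)=F(P)$. Since the action of $b_{Q^@}^P$ on $F^\flat(P)=F(P)$ is realized via the canonical inclusion $B_k(P,P)\hookrightarrow B_k^\ctimes(P,P)$, we rewrite
\[
b_{Q^@}^P\cdot F(P)\;=\;b_{Q^@}^P\cdot c_T^P\cdot F(P)\;\neq\;0,
\]
so $b_{Q^@}^P\cdot c_T^P\neq 0$ in $B_k^\ctimes(P,P)$. Assembling this non-vanishing as $P$ varies yields $\widehat{b}_{Q^@}\cdot\widehat{c}_T\neq 0$ as an endomorphism of the identity functor of $k\calC_p^\ctimes$.

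Finally, the theorem immediately preceding Definition~\ref{defn:centralresolution} converts this non-vanishing into the existence of a triple $(P',K',S')$ that witnesses a central resolution relating the atoric $p$-groups $T$ and $Q^@$, yielding the asserted conclusion $Q^@\rightsquigarrow T$. The one subtlety to verify is that the biset idempotent $b_{Q^@}^P$, when viewed inside $B_k^\ctimes(P,P)$, interacts with the fibered idempotent $c_T^P$ exactly as required by the preceding theorem; this is automatic since $c_T^P$ is central in $B_k^\ctimes(P,P)$ and the inclusion of double Burnside algebras is a unital ring homomorphism, so the product of the two idempotents is unambiguous and the hypotheses of the preceding theorem are verified verbatim.
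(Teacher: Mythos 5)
Your strategy is precisely the one the paper intends (the corollary is stated without proof, but the route through the theorem converting $\widehat{b}_L\cdot\widehat{c}_M\neq0$ into a central-resolution statement is unmistakable), and each intermediate step is sound: $\widehat{b}_{Q^@}$ is exact and $S_{Q,W}$ lies in its block, so $\widehat{b}_{Q^@}F^\flat\neq0$; together with $\widehat{c}_TF=F$ and the centrality of $c_T^P$ this gives $b_{Q^@}^P\cdot c_T^P\neq0$ for some $P$, hence $\widehat{b}_{Q^@}\cdot\widehat{c}_T\neq0$.

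The slip is in the very last inference. The theorem states that $\widehat{b}_L\cdot\widehat{c}_M\neq0$ if and only if there is a triple $(Q',K',S')$ with $K'$ central cyclic, $(Q'/K')^@\cong M$, and $(Q'/S')^@\cong L$ — in the notation of Definition~\ref{defn:centralresolution}, this is exactly $M\rightsquigarrow L$. Substituting $L=Q^@$ and $M=T$ therefore yields $T\rightsquigarrow Q^@$, not $Q^@\rightsquigarrow T$. The relation is deliberately asymmetric (the central cyclic $K$, which carries the fibered data, sits on the $\widehat{c}$-side), so the two directions are genuinely different. The rest of the section bears this out: the paragraph ``Central resolution of $\mathbf{1}$'' determines the $L$'s with $\mathbf{1}\rightsquigarrow L$, and both subsequent examples, where the fibered vertex is $T=\mathbf{1}$, identify the minimal groups of the composition factors of $F^\flat$ with precisely those $L$'s. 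Your write-up reproduced the corollary's printed conclusion without matching up which group plays $M$ and which plays $L$ in the theorem; tracing the substitution shows the theorem delivers $T\rightsquigarrow Q^@$, and the printed statement appears to have the arrow reversed.
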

\begin{proof}
    Suppose $S_{Q,W}$ is a composition factor of $F^\flat$. Then there is an indecomposable summand $M$ of $F^\flat$ having $S_{Q, W}$ as a composition factor. Let $L$ be the vertex of $M$. Then by \cite[Theorem 9.8]{double}, we must have $b_L M=M$ and also that $Q^@ = L$. Now since $F$, as a fibered biset functor, has vertex $T$, we also have $c_TF = F$ which implies $(b_L\cdot c_T) F \not= 0$. Hence $b_L\cdot c_T\not= 0$. Now the result follows from the definition of being a central resolution.  
\end{proof}
\begin{nothing} {\bf Central resolution of {\bf 1}.} Next we determine atoric $p$-groups $L$ such that ${\bf 1}\rightsquigarrow L$. By definition ${\bf 1}\rightsquigarrow L$ holds if there is an atoric $p$-group $T$ and normal subgroups $K$ and $S$ satisfying the conditions in Definition \ref{defn:centralresolution}. Since $(T/K)^@= {\bf 1}$, we get that $T/K$ is elementary abelian, and $K = \Phi(T)$, as in the previous section. In particular, the Frattini subgroup $\Phi(T)$ is cyclic and central. Since $T$ is atoric, we also have that $\Omega_1(Z(T))\le \Phi(T)$. Since the latter is cyclic, the center $Z(T)$ must also be cyclic. Hence we get that $T$ is an atoric $p$-group with cyclic center whose Frattini is also central. On the other hand, since $S\le \Phi(T)$ and $\Phi(T)\cap S = {\bf 1}$, we get $S={\bf 1}$. Now we conclude the following.
\end{nothing}

\begin{coro}
    An atoric $p$-group $L$ is a central resolution of ${\bf 1}$ if and only if $L$ has a cyclic center and central Frattini subgroup.
\end{coro}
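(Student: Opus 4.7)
The plan is to extract the forward direction essentially from the discussion carried out in the paragraph preceding this corollary, and then to verify the converse by exhibiting a concrete witnessing triple. First I would suppose ${\bf 1}\rightsquigarrow L$ via some triple $(Q,K,S)$. Applying the preceding lemma, I may replace $(Q,K,S)$ by a triple in which the ambient $p$-group is itself atoric, so without loss of generality I assume $Q=T$ is atoric.

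Next I would unpack the defining conditions in turn. The requirement $(T/K)^@\cong{\bf 1}$ forces $T/K$ to be elementary abelian, hence $\Phi(T)\le K$; combined with the constraint $K\le\Phi(T)$ this yields $K=\Phi(T)$. Since $K$ is required to be cyclic and central in $T$, this already shows that $\Phi(T)$ is a cyclic central subgroup of $T$. The atoricity of $T$ then implies $\Omega_1 Z(T)\le\Phi(T)$, and since $\Phi(T)$ has just been shown to be cyclic, so is $\Omega_1 Z(T)$; the standard fact that an abelian $p$-group with cyclic socle is cyclic then forces $Z(T)$ itself to be cyclic. Finally $S\le\Phi(T)=K$ together with $K\cap S={\bf 1}$ gives $S={\bf 1}$, so $L\cong (T/S)^@=T^@=T$, which inherits a cyclic center and a central Frattini subgroup.

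For the converse, I would suppose that $L$ is atoric with cyclic center and central Frattini subgroup, and simply exhibit the witnessing triple $(T,K,S):=(L,\Phi(L),{\bf 1})$. Then $K,S\le\Phi(T)$ and $K\cap S={\bf 1}$ are immediate, and $K=\Phi(L)$ is central and cyclic by hypothesis. Moreover $(T/K)^@=(L/\Phi(L))^@={\bf 1}$ since $L/\Phi(L)$ is elementary abelian, while $(T/S)^@=L^@=L$ by atoricity of $L$, which completes the verification of Definition \ref{defn:centralresolution}.

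I do not foresee a genuine obstacle here: the argument is essentially a bookkeeping of the five conditions in the definition of a central resolution combined with the characterisation $(T/K)^@={\bf 1}\iff K\supseteq\Phi(T)$. The only nontrivial group-theoretic input is the classical observation that an abelian $p$-group with cyclic socle must itself be cyclic, which is what upgrades the cyclicity of $\Omega_1 Z(T)$ to that of $Z(T)$.
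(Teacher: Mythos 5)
Your proof is correct and follows essentially the same route as the paper: reduce to an atoric ambient group via the preceding lemma, deduce $K=\Phi(T)$ from $(T/K)^@=\mathbf 1$, use atoricity together with cyclicity of $\Phi(T)$ to conclude $Z(T)$ is cyclic, observe $S=\mathbf 1$, and verify the converse with the triple $(L,\Phi(L),\mathbf 1)$. One tiny imprecision: in the converse you assert $\Phi(L)$ is "cyclic by hypothesis," whereas the hypothesis only gives that $\Phi(L)$ is central and $Z(L)$ is cyclic — the cyclicity of $\Phi(L)$ is then the one-line consequence $\Phi(L)\le Z(L)$.
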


The groups described in the above corollary are explicitly classified by Bornand~\cite{bornand}. If \(L\) is abelian, then it is necessarily cyclic. Bornand refers to non-abelian \(p\)-groups \(P\) with cyclic center \(Z(P)\) and central Frattini subgroup \(\Phi(P)\) as \emph{quasi-extraspecial}. In~\cite[Proposition~1.3.22]{bornand}, a classification of all such groups is given in the case where \(|\Phi(P)| > p\).

In the case where \(\Phi(P) \cong C_p\) and \(Z(P) \geq \Phi(P)\) we use~\cite[Lemma~3.2]{S}, together with the fact that \(Z(P)\) is cyclic, it follows that \(P\) is either isomorphic to an extraspecial \(p\)-group or to a central product of an extraspecial \(p\)-group with a cyclic group of order \(p^2\).

Hence, if \(L\) is a non-abelian \(p\)-group with cyclic center \(Z(L)\) and central Frattini subgroup \(\Phi(L)\), then \(L\) is isomorphic to one of the following:
\begin{table}[H]
\caption{Quasi-extraspecial $p$-groups}
\renewcommand{\arraystretch}{1.2}
\begin{tabular}{|c|c|c|p{6cm}|}
\hline
\textbf{Notation} & \textbf{Frattini} & \textbf{Center} & \textbf{Description} \\
\hline
\( X_{p^{2\ell+1}} \) & \( C_p \) & \( C_p \) & An extraspecial \( p \)-group. \\
\hline
\( X_{p^{2\ell+1}} * C_{p^2} \) & \( C_p \) & \( C_{p^2} \) & Central product of an extraspecial \( p \)-group with a cyclic group of order \( p^2 \). \\
\hline
\( X^{+}_{p^{2\ell+1}} * C_{p^{m+1}} \) & \( C_{p^m} \) & \( C_{p^{m+1}} \) & Central product of an extraspecial \( p \)-group of exponent \( p \) with a cyclic group of order \( p^{m+1} \), for \( m > 1 \). \\
\hline
\( X^{+}_{p^{2\ell-1}} * M_{p^{m+2}} \) & \( C_{p^m} \) & \( C_{p^m} \) & Central product of an extraspecial \( p \)-group of exponent \( p \) with the group \( M_{p^{m+2}} \), for \( m > 1 \). \\
\hline
\end{tabular}

\medskip

We note that the group \( M_{p^{m+2}} \) has presentation 
\[M_{p^{m+2}} = \langle x, y \mid x^p = y^{p^{m+1}} = 1,\ [x,y] = y^{p^m} \rangle.\] In the case \( p = 2 \), the notation \( X^{+}_{p^{2\ell+1}} \) denotes the central product \( D_8^{*\ell} \), that is, the central product of \( \ell \) copies of the dihedral group of order \( 8 \).
\end{table}

\begin{example}
Consider the simple fibered $p$-biset functor $S_{\1, \1, \1, [\CC]}$. By \cite[Section 11B]{fibered biset}, it is isomorphic to the functor of complex characters $\CC R_\CC$. By the above corollary, the composition factors of the underlying biset functor $S_{\1, \1, \1, [\CC]}^\flat\cong \CC R_\CC$ must be of the form $S_{Q, V}$ where $Q$ is a central resolution of the trivial group. On the other hand, by \cite[Section 7]{biset functor}, we have the decomposition
\[
\CC R_\CC \cong \bigoplus_{C,\sigma} S_{C, \CC_{\sigma}}
\]
where the sum runs over all cyclic groups $C$ and all primitive $\CC[\Aut(C)]$-characters $\sigma$. This example illustrates that the actual composition factors may involve only a small subclass of the groups appearing as central resolutions. However, as the next example shows, not all central resolutions are redundant.
\end{example}

\begin{example}
We consider the simple fibered biset functor $S$ mentioned in \cite[Remark 9.8]{fibered biset}. Consider the quadruple $(D_8, Z, \zeta, [V])$ where $D_8$ is the dihedral group of order $8$, $Z$ denotes its center, $\zeta$ is the non-trivial homomorphism $Z\to \CC^\times$ and $V$ is a simple $\CC\Gamma_{(D_8, Z,\zeta)}$-module. For this example we do not need the explicit structure of $V$. 

Let $S = S_{(D_8, Z, \zeta, [V])}$. The quadruple $(Q_8, Z',\zeta', [V'])$, where $Q_8$ is the group of quaternion group of order $8$, $Z'$ is its center, $\zeta'$ is the non-trivial homomorphism $Z'\to \ctimes$, and $V'$ corresponding simple module, is linked to $(D_8, Z, \zeta, [V])$ and hence
\[
S = S_{(D_8, Z, \zeta, [V])}\cong S_{(Q_8, Z', \zeta', [V'])}
\]
by \cite[Remark 9.8]{fibered biset}. Now by Theorem \ref{thm:vertexsimple}, the vertex of $S$ is $\1$ since $(D_8/Z)^@ =\1$. It follows that any composition factor of $S^\flat$ must be of the form $S_{Q, V}$ where $Q$ is a central resolution of $\1$.

On the other hand, both $D_8$ and $Q_8$ are minimal groups for the biset functor $S^\flat$. Hence it has composition factors of the form $S_{D_8, U}$ and $S_{Q_8, U'}$ for some simple $\CC[\Out(D_8)]$-module $U$ and some simple $\CC[\Out(Q_8)]$-module $U'$. Note that $D_8$ and $Q_8$ are both atoric $2$-groups and hence are central resolutions of $\mathbf{1}$.  
\end{example}

\end{document}